\documentclass[11pt,oneside,english]{article}

\usepackage[utf8]{inputenc}
\usepackage[a4paper,margin=1in]{geometry}
\geometry{verbose}

\usepackage{color}

\usepackage{amstext}
\usepackage{amsmath}
\usepackage{amsthm}

\usepackage{amssymb}
\usepackage{thmtools}
\usepackage{thm-restate}
\usepackage[autostyle]{csquotes}
\usepackage{nicefrac}
\usepackage{enumitem}
\usepackage{pifont}

\usepackage[
backend=biber,
style=alphabetic,
citestyle=alphabetic
]{biblatex}

\addbibresource{mybibilo.bib}

\AtBeginDocument{\providecommand\lemref[1]{\cref{lem:#1}}}
\AtBeginDocument{}
\AtBeginDocument{\providecommand\defref[1]{\cref{def:#1}}}

\makeatletter

\newcommand*{\underarrow}{\def\@underarrow{\relax}\@ifstar{\@@underarrow}{\def\@underarrow{\hidewidth}\@@underarrow}}
\newcommand*{\@@underarrow}[2][]{\underset{\@underarrow\substack{\uparrow\if\relax\detokenize{#1}\relax\else\\#1\fi}\@underarrow}{#2}}

\newcommand*{\overarrow}{\def\@overarrow{\relax}\@ifstar{\@@overarrow}{\def\@overarrow{\hidewidth}\@@overarrow}}
\newcommand*{\@@overarrow}[2][]{\overset{\@overarrow\substack{\if\relax\detokenize{#1}\relax\else#1\\\fi\downarrow}\@overarrow}{#2}}
\makeatother

\makeatletter
\def\moverlay{\mathpalette\mov@rlay}
\def\mov@rlay#1#2{\leavevmode\vtop{
   \baselineskip\z@skip \lineskiplimit-\maxdimen
   \ialign{\hfil$\m@th#1##$\hfil\cr#2\crcr}}}
\newcommand{\charfusion}[3][\mathord]{
    #1{\ifx#1\mathop\vphantom{#2}\fi
        \mathpalette\mov@rlay{#2\cr#3}
      }
    \ifx#1\mathop\expandafter\displaylimits\fi}
\makeatother

\newcommand{\cupdot}{\charfusion[\mathbin]{\cup}{\cdot}}
\newcommand{\bigcupdot}{\charfusion[\mathop]{\bigcup}{\cdot}}

\theoremstyle{remark}
\newtheorem*{rem*}{Remark}
\theoremstyle{plain}
\newtheorem{thm}{\protect\theoremname}
\theoremstyle{definition}
\newtheorem{defn}{\protect\definitionname}
\theoremstyle{plain}
\newtheorem{lem}{\protect\lemmaname}
\theoremstyle{remark}
\newtheorem{claim}{Claim}
\theoremstyle{plain}
\newtheorem{fac}{Fact}
\theoremstyle{plain}
\newtheorem{cor}{Corollary}
\theoremstyle{plain}

\declaretheorem[name=Proposition,Refname={Proposition,Proposition}]{prop}

\theoremstyle{plain}

\usepackage{url}
\usepackage{relsize}

\usepackage{bbm}

\usepackage{babel}

\providecommand{\lemmaname}{Lemma}

\providecommand{\definitionname}{Definition}
\providecommand{\theoremname}{Theorem}

\usepackage{varioref}
\usepackage[unicode]{hyperref}

\usepackage{cleveref}

\crefname{defn}{definition}{Definition}
\Crefname{defn}{Definition}{Definitions}
\crefname{claim}{claim}{claims}
\Crefname{claim}{Claim}{Claims}
\crefname{prop}{proposition}{propositions}
\Crefname{prop}{Proposition}{Propositions}
\crefname{cor}{corollary}{corollaries}
\Crefname{cor}{Corollary}{Corollaries}
\crefname{thm}{theorem}{theorems}
\Crefname{thm}{Theorem}{Theorems}

\crefname{lem}{lemma}{lemmas}
\Crefname{lem}{Lemma}{Lemmas}
\DeclareMathOperator{\Inv}{Inv}

\DeclareMathOperator{\HPOWER}{HPOWER}

 \linespread{1}
 
  \def\acts{\curvearrowright} 

\begin{document}

\title{Transitive bounded-degree 2-expanders from regular 2-expanders}

\author{Eyal Karni$^1$ \and Tali Kaufman$^2$}
\date{
    $^1$Bar Ilan University, ISRAEL. \\  eyalk5@gmail.com\\
    $^2$ Bar Ilan University, ISRAEL. \\  kaufmant@mit.edu\\[2ex]
    \today
}

\maketitle

\begin{abstract}

A two-dimensional simplicial complex is called $d$-{\em regular} if every edge of it is contained in exactly $d$ distinct triangles. It is called $\epsilon$-expanding if its up-down two-dimensional random walk has a normalized maximal eigenvalue which is at most $1-\epsilon$.
  In this work, we present a class of bounded degree 2-dimensional expanders, which is the result of a small 2-complex action on a vertex set. The resulted complexes are fully transitive, meaning the automorphism group acts transitively on their faces. 

  Such two-dimensional expanders are rare! Known constructions of such bounded degree two-dimensional expander families are obtained from deep algebraic reasonings (e.g. coset geometries).

  We show that given a small $d$-regular two-dimensional $\epsilon$-expander, there exists an $\epsilon'=\epsilon'(\epsilon)$ and a family of bounded degree two-dimensional simplicial complexes with a number of vertices goes to infinity, such that each complex in the family satisfies the following properties:

\begin{itemize}
\item It is $4d$-regular. 
\item The link of each vertex in the complex is the same regular graph (up to isomorphism).
\item It is $\epsilon'$ expanding.
\item It is transitive.
\end{itemize}

  The family of expanders that we get is explicit if the one-skeleton of the small complex is a complete multipartite graph, and it is random in the case of (almost) general $d$-regular complex. For the randomized construction, we use results on expanding generators in a product of simple Lie groups. This construction is inspired by ideas that occur in the zig-zag product for graphs. It can be seen as a loose two-dimensional analog of the replacement product.
\end{abstract}

\pagebreak
\part{Overview}
\label{over}
\section{General Introduction}
\label{ssub:general-introduction}

The notion of expansion is a key one in graphs, having many applications to a variety of mathematical problems. It generally means how easy is it to get from one vertex set to another one on a random walk on a graph. Among its applications are practical such as error correcting codes \cite{sipser1994expander} which could be used in communication, and theoretical as it was used to prove the PCP theorem \cite{dinur2007pcp}.

Over the last two decades or so, there have been various attempts to generalize this notion to higher dimensions. 

The usual useful notion is that of (pure) simplicial complex: 
\begin{defn}\label{defn:Simplicial_complex}
{\em Simplicial complex } $\mathcal{C}$ is a collection of subsets of set $X$ such that for any  $f \in \mathcal{C}$ , any subset of $f$ is in $\mathcal{C}$. Members of $\mathcal{C}$ are called faces, where the dimension of a face $f$ is $|f|-1$. 

The set of all faces of dimension $k$ is denoted $\mathcal{C}(k)$. Vertices are 0-faces, edges are 1-faces and triangles are 2-faces.
\end{defn}

There has been a growing interest in this field, motivated partially by its usefulness to constructing quantum error correcting codes. It was speculated that it holds the key for solving some theoretical questions that are out of reach for expander graphs otherwise (for exmaple \cite{dinur2017high} ).

The various attempts led to various definitions for expansion in high dimensions.
In graphs, one would find various definitions for expansion that are essentially the same. Expansion in the notion of sets is equivalent to expansion in term of random walks, which is the same as pseudo-randomness. The extension of these properties to the high dimensional case, turned out to yield inherently different notions of expansion, where some of them are even contradicting.

We are interested in a certain expansion property of 2-complexes, that is the (two-dimensional) random walk convergence. 
\begin{defn}[ Random walk ] on a 2-complex $\mathcal{C}$ is defined to be a
    sequence of edges $\mathcal{ E }_{0},\mathcal{ E }_{1},\cdot\cdot\cdot\in \mathcal{C}(1)$ such that\footnote{Original definition by \autocite[]{kaufman2016high}. Based on the definition in \cite{conlon2019hypergraph} }
\begin{enumerate}
	\item $\mathcal{ E }_{0}$ is chosen in some initial probability distribution $p_{0}$
	    on $\mathcal{C}(1)$
	\item for every $i>0$, $\mathcal{E}_{i}$ is chosen uniformly from the neighbors of
	    $\mathcal{E}_{i-1}$. That is the set of $f\in \mathcal{C}(1)$ s.t. $\mathcal{E}_{i-1}\cup f$
	 is in $\mathcal{C}(2)$
\end{enumerate} 
\end{defn}
This is equivalent to a random walk on graph $G_{walk}$: 
    \begin{defn}[Random Walk Graph]
	\label{defn:gwalk}
    The random walk graph of a complex $\mathcal{C}$ ,  $G_{walk}(\mathcal{C})$ is defined by 
        $V(G_{Walk})=\mathcal{C}(1)$  , where 
	$\mathcal{ E }\sim\ \mathcal{ E }'\text{ }\iff\text{ }\exists\ \sigma\in\ \mathcal{C}(2)\text{ s.t. }\mathcal{ E },\mathcal{ E }'\in\ \sigma $
	\begin{center}
	    ($a \sim b$ suggests that $a$ is adjacent to $b$ in the graph)
	\end{center}
    \end{defn}
\begin{defn}[Expander]
\label{sub:expander}
We say $\mathcal{C}$  is a $\epsilon$-expander if $\lambda(G_{ walk })<1-\epsilon$, where $\lambda(G)$ is the maximal non-trivial (normalized\footnote{We assume the matrices are normalized by default }) eigenvalue of the graph. 
\end{defn}
This property is useful in the setting of agreement expanders \cite{dinur2017high}, and has multiple relations to other notions of expansion \cite{kaufman2016high} .

Good expanders  are both sparse, translated to having a low degree, and have good expansion properties. While randomizing edges in graph would typically yield a good expander family \cite{puder2015expansion} , that is not the case in high dimensions. And specifically there are not many ways that are simple and combinatorial in nature.

Until recently, the prime example of bounded-degree high dimensional expander\footnote{We disregard geometric overlapping here} was Ramanujan Complexes (ie \cite{LubotzkySV05}), which yields the best possible expansion properties.

\medskip \medskip We are also interested in symmetric or transitive complexes:

\begin{defn}[Transitive Complex]\label{defn:edgetrantivity}
An automorphism of complex $\mathcal{C}$ is a function 
    \[\phi: V(\mathcal{C}) \longrightarrow V(\mathcal{C})\]
that is bijective and such that $\phi(v) \in \mathcal{C} \iff v \in \mathcal{C}$

A complex $\mathcal{C}$ is called transitive if its automorphism group act transitively on its faces.
for every $x,y \in \mathcal{C}$ where $|x|=|y|$, exists an automorphism $\phi$ s.t. $\phi(x)=y$. 
That means that the automorphism group acts transitively on vertices, edges and so on.

\end{defn}

Such constructions are rare. Some coset geometries are known to be transitive \autocite[]{kaufman2017simplicial}.

There is an open conjecture that suggest that SRW(simple random walk) on a transitive expanders exhibits a cut-off phenomenon\footnote{ See \cite{basu2014characterization} for explanation of this property.  } \cite{lubetzky2016cutoff}. That was proved only for Ramanujan graphs so far (also in \cite{lubetzky2016cutoff}).

\subsection{Combinatorial Constructions of High Dimensional Expanders}
\label{ssub:combinatorial}

The innovation of David Conlon in his construction \cite{conlon2017hypergraph} was that he used combinatorial method to allow one to take random set of generators of Cayley graphs and to provide a 2-complex built upon this that satisfies some HDE expansion properties including the geometric-overlapping property and the 2D random walk.  The drawback of this construction is that it is built upon abliean groups, as they have  a poor expansion properties\footnote{\cite[Proposition 3]{alon1994random} }.

  \cite{chapman2018expander} studied regular graphs whose links are regular.  They achieved some bounds concerning the maximal non-trivial eigenvalue for such graphs. And they constructed 2-expanders from expander graphs whose random walk converge rapidly\footnote{The graphs should be of high girth}.

 \cite{liu2019high} provided a construction called LocalDensifier that takes a small complex $H$ with expansion properties, and a graph $G$ and generates a large complex where all high order random walks have a constant spectral gap. In their construction, the vertex set of the new complex is  $V(G)\times V(H)$.  
They used technique of decomposing the Markov chain into a restriction chain and a projection chain.

In the very recent \cite{alev2020improved}, they obtained a bound on the second eigenvalue in the $k$-dimensional random walk in terms of the maximal second eigenvalue of  the links. Using this method, it is straight forward to obtain a bound on the spectral gap of the LocalDensifier construction.

\section{General Overview}
\label{sub:our_constructions}

We want to take a small general (abstract simplicial-)complex and generate a complex that is {\em transitive regular expander}, and has isomorphic {\em link}. 
We need the following definitions:
    
\begin{defn}
\label{defn:Link}
For complex $\mathcal{C}$:
\begin{itemize}
    \item the {\em link} of a vertex $v$ is the complex $\mathcal{C}_v$ defined by $\{ \sigma \setminus v \mid \sigma \in \mathcal{C}\text{ where }v\in \sigma \}$
     \item The  $k$-skeleton deonted $\mathcal{C}^k$ is  
   $ \{  \sigma \mid |\sigma|=k-1\text{ where }\sigma \in \mathcal{C} \} $
\end{itemize}
\end{defn}
    
\begin{defn}[Upper-regular complex] 

    \cite{LubotzkyLR18} A 2-complex is $d$-{\em regular} if every edge is contained in exactly $k$ triangles     (see \ref{ssub:regular} for more details)
\end{defn}

We want to prove the following theorem:

\begin{thm}
    [Non-formal version of \cref{Lithm} ] 
Given a complex that is $d$-regular, 
with minimal additional requirements on coloring\footnote{\label{rr} $\chi$-strongly-colorable 
(Strong coloring or rainbow coloring means that every triangle has vertices of 3 different colors), $\chi$ is even 
and there is a coloring in which the edges that involves vertices of colors $a,b$  incident to at least 2 distinct vertices of each color },
    exists  $\epsilon'=\epsilon'(\epsilon)$ and a family of  simplicial complexes with a number of vertices goes to infinity, such that each complex in the family has the following properties:
    \begin{itemize}
    \item It is $4d$-regular. 
\item The link of each vertex  is the same regular graph (up to isomorphism).
    \item It is $\epsilon'$ expanding.
    \item It is transitive.
    \end{itemize}

\end{thm}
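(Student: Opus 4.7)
Fix the small $d$-regular 2-complex $H$ with its strong $\chi$-coloring $c: V(H) \to [\chi]$, and write $V_i = c^{-1}(i)$. The plan is to construct the large complex $\mathcal{C}$ by letting a carefully chosen finite group $\Gamma$ (a quotient of a product of simple Lie-type groups, so that random generating subsets are Cayley-expanding) act, in a color-sensitive way, on copies of the faces of $H$. Concretely, I would take the vertex set of $\mathcal{C}$ to be $V(H) \times \Gamma$, partitioned by color into classes $V_i \times \Gamma$; lift each edge $\{u,v\} \in H(1)$ to edges of the form $\{(u,g), (v, g\cdot s_{u,v})\}$ for carefully selected generators $s_{u,v} \in \Gamma$ indexed by the edge; and lift each triangle $\{u,v,w\} \in H(2)$ coherently by using the same element $g$ at a designated ``pivot'' vertex of each color class and picking the two other generators compatibly. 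The parity of $\chi$ and the bichromatic-incidence hypothesis should be precisely what is needed to set up the pivot assignment symmetrically across all colors, giving a two-dimensional analog of the replacement product alluded to in the abstract.

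\paragraph{Verifying the four combinatorial properties.} Transitivity is essentially built in: $\Gamma$ acts freely by left multiplication on the second coordinate of each color class, so it acts transitively on vertices within each color, and the color symmetry forced by the hypothesis (essentially a transitive action of a subgroup of $\mathrm{Aut}(H)$ on colors) allows one to permute color classes; the triangle-lifting rule makes these actions extend to edges and triangles, giving face-transitivity. Regularity follows from a direct count: each lifted edge sits above a unique edge of $H$, which lies in $d$ triangles of $H$, each contributing a constant number of lifted triangles; the factor of four should come from the constant pairing enforced by the bichromatic-incidence condition together with the evenness of $\chi$. Isomorphism of links is then automatic, since the link of $(v,g)$ depends only on $c(v)$ and the fixed generator set, and the coloring hypothesis forces all color classes in $H$ to have isomorphic links.

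\paragraph{Expansion and main obstacle.} The hard part will be proving $\epsilon'$-expansion of $G_{walk}(\mathcal{C})$. The route I would take is the local-to-global principle of \cite{alev2020improved}, which bounds $\lambda(G_{walk})$ in terms of the spectral gap of the vertex links (all isomorphic by the previous step) plus the spectral gap of the one-skeleton. The link of a vertex in $\mathcal{C}$ has the structure of a Cayley-type blow-up of $\mathrm{Link}_H(v)$: each vertex of $\mathrm{Link}_H(v)$ is replaced by a translate of $\Gamma$ (or of a natural factor), with edges coming from pairs of generators that share a common triangle of $H$. Bounding its spectral gap by some $\epsilon'(\epsilon)$ needs two ingredients — the spectral gap $\epsilon$ inherited from $H$, and an independent expansion bound for the Cayley graph on $\Gamma$ with the chosen generators. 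The latter is where the randomized construction invokes Bourgain--Gamburd style results on expanding generators in products of simple Lie-type groups, while in the complete multipartite explicit case one can take $\Gamma$ to be a direct product and exhibit the generators by hand. The main difficulty I anticipate is quantitative: tracking the loss at each step (the lifted-link gap in terms of $H$'s link gap and $\Gamma$'s Cayley gap, and then the trickle-down from the link to the two-dimensional walk) while producing an explicit $\epsilon'(\epsilon)$, and verifying that the coloring hypothesis really is sufficient to keep the lifted links regular, connected, and uniformly expanding.
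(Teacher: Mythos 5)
Your proposal diverges from the paper's construction in a structural way, and the divergence creates real gaps. You take the vertex set of the large complex to be $V(H)\times\Gamma$ with color classes $V_i\times\Gamma$, i.e.\ a replacement-product-style blow-up of the small complex. The paper does not do this: its vertex set is the group $\mathcal{G}=G_1\times\cdots\times G_\chi$ itself, and the small complex $\mathcal{A}$ is mapped, color by color, onto \emph{generator sets} $F_i\subset G_i$, with faces of $\mathcal{C}$ obtained as translates $\sigma\cdot g$ of the faces $\sigma$ of the image complex $\mathcal{S}$ under the Schreier-complex action. Your choice is closer to the LocalDensifier construction of Liu et al.\ (which the paper cites as prior work) than to the paper's HDZ.

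The gap this opens up concerns two of the four required properties. For transitivity you write that the hypothesis forces ``a transitive action of a subgroup of $\mathrm{Aut}(H)$ on colors''; the coloring hypotheses in the theorem do \emph{not} imply anything about $\mathrm{Aut}(H)$, and a generic $d$-regular strongly $\chi$-colorable complex has trivial automorphism group. With your vertex set $V(H)\times\Gamma$, the coordinate $V(H)$ is an invariant: an automorphism must project to an automorphism of $H$, so without extra symmetry of $H$ the construction is not vertex-transitive, let alone face-transitive. The paper avoids this entirely because its vertex set is a single group acted on by right multiplication, and Lemma~\ref{lem:symmetry} gives transitivity for free from the transitive group action, with no assumption on $\mathcal{A}$. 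Similarly, you assert that all color classes of $H$ have isomorphic links and that the link of $(v,g)$ depends only on $c(v)$; neither follows from the stated hypotheses, so your link-isomorphism claim fails for your construction. In the paper the link of every vertex $g\in\mathcal{G}$ is literally the same graph $G_{\mathrm{link}}$ (Lemma~\ref{lemlink}), again because of the underlying group structure.

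On the expansion argument, your proposed route (local-to-global via \cite{alev2020improved}, bounding link gaps and then trickling down) is genuinely different from the paper's. The paper explicitly states that its analysis is \emph{not} link-based; instead it builds a replacement graph $G_{\mathrm{rep}}=G_{\mathrm{dual}}\raisebox{.5pt}{\textcircled{\raisebox{-.9pt}{r}}}L$, shows the ``half-zig'' graph $G_{\mathrm{zig}}$ is a lift of $G_{\mathrm{walk}}(\mathcal{C})$ (Lemma~\ref{liftlemma}), and bounds $\lambda(G_{\mathrm{zig}})$ by the zig-zag eigenvalue $\lambda(G_{\mathrm{dual}}\raisebox{.5pt}{\textcircled{\raisebox{-.9pt}{z}}}L)$ (Lemma~\ref{lem:In-case-there}), giving Theorem~\ref{main_thm}; the group-theoretic input (Breuillard--Green--Guralnick--Tao plus the Lov\'asz local lemma, Lemma~\ref{lem:random}) is then used to control $\lambda(G_{\mathrm{dual}})$, not a link. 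Even if you fixed the vertex-set issue, a link-based route would need the links to expand above the trickle-down threshold, which the paper's bounds (e.g.\ the $\frac{\sqrt3}{2}+\ldots$ rate in the Conlon case) do not guarantee; the zig-zag route sidesteps this by never requiring the links to be strong expanders.
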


We will do this in several steps.
First, we introduce a general structure called Schreier complex that represents an action of a complex on a set of vertices. We do so in  \cref{sub:introduction_to_cts}. 

\medskip \medskip Afterwards, we will focus on a private 2-dimensional case of Schreier complex.

	We show a method to analyze expansion of a large complex from its small composing complex assuming they sit together in a structure that we call {\em CTS}(commutative triplets structure). 
   This structure is a special case of Schreier complex in which there is a small 2-complex that acts on a group, and additional conditions are satisfied.

   We do present it in \cref{ssub:cts}, where the exact proofs with all the details are in \cref{parta}.

   We will show that it is possible to generate a CTS from a general\footnote{with the  minimal requirements as in \cref{rr}} 2-complex using a construction we call {\em HDZ}. That in general would only give us an obscure expression for the expansion. In order to make it meaningful, we based our complex on product of simple Lie groups, and rely on expansion properties of random generators in Cayley graphs. We present it in \cref{hdzover}, while the proofs are in \cref{HDZpart}. 

   Finally, we will show that Schreier complexes are transitive when they are based upon a transitive group action, as in the case of HDZ. 

 The proof is short and will be brought here (\cref{symprop}).

\section{Schrier Complex}
\label{sub:introduction_to_cts}

  \medskip \medskip   We remind the reader the definition of Cayley graph:
\begin{defn}\label{defn:cayleygrap}
    Given a group $G$ and a set $S\subset G$,  $Cay(G,S)$ is defined to be the graph with vertex set $G$, and edge set \[\{ ( sg,g) \mid s\in S,g\in G\}\]  
\end{defn}
\begin{defn}[Schreier Graph]
\label{sub:schreier_graph}
    Given a group $G$ that acts on a set $X$ and a set of generator $S \subseteq G,$ the Schreier graph $\operatorname{Shr}(G, X, S)$ is a graph, whose vertices are labeled by elements of $X$ and there is an edge $\left(x_{1}, x_{2}\right)$ iff there exist $s \in S$ such that $x_{2}=s \cdot x_{1}$ (the action of $s$ on $x_{1}$ ).
\end{defn}

Let us focus on the case where $S$ is symmetric and each generator in $S$ is  composed of two commutative steps namely $s= s_1s_2$ where $[s_1,s_2]=0$.
Then $S \subset S_1 S_2$ where $S_1$ and $S_2$ commute .  

\medskip \medskip Let us look at a specific edge \[\mathcal{E}=  \{ s_1s_2g,g \} \]

We wish to describe this edge as a right action of  $G$ upon $P(G)$\footnote{power set of $G$}, or using the following product \begin{align*}
    \Psi : P(G) \times G   &\longrightarrow  P(G)\\
    (A, g)&\longrightarrow A \cdot g = \{ ag \mid a \in A  \} 
\end{align*}
In the product form, our edge has two descriptions\footnote{A fact that would prove beneficial in a generalized construction}:   \[\mathcal{E}= \{s_2,s_1^{-1}\} \cdot  s_1g = \{ s_2^{-1},s_1 \} \cdot s_2g \]

And we can say 

\[E(Cay(G,S))= E( \mathcal{S} ) \cdot g \] where $E(\tilde{G})$ denotes the edge set of graph $\tilde{G}$, and $\mathcal{S}$ a graph with the following edges 
\[ E(\mathcal{S}) =  \{ \{s_1, s_2\} \mid {s_1}^{-1}s_2 \in S , s_i\in S_i  \} \]

\medskip \medskip Now, we can generalize this in two different ways -

We can generalize $\mathcal{S}$ to be a complex instead of graph. And we can generalize the group product to an action of a group on a set.

\begin{defn}[Complex Action on a vertex set]
    \label{defn:psi}
    Let $G$  a group that acts on a set  $X$. 
    Let  $\mathcal{S}$ be a complex s.t. $\mathcal{S} \subset P( G )$. 
    We wish to describe an action of an complex on a vertex set 
    
    by defining the following product:  \begin{align*}
	\Psi &: \mathcal{S} \times X   \longrightarrow  P(X)\\
        \sigma \cdot x  &:= \Psi(\sigma,x)= \{ \sigma \cdot x \mid \sigma \in \mathcal{S}  \} 
    \end{align*}

\end{defn}

\begin{defn}[Schreier complex]
    \label{defn:St} 
    Let $G$  a group that acts on a set  $X$. 
    Let $\mathcal{S}$ be a complex s.t. $\mathcal{S} \subset P( G )$. 
    We define the {\em Schreier complex } of $\mathcal{S}$ on $X$ 
    by 
    \[ 	Sc[\mathcal{S}, X,G] := \mathcal{S} \cdot X= \{ \sigma \cdot x \mid x\in X, \sigma \in \mathcal{S} \}  \]
    using the product $\Psi$.
\end{defn}
\begin{rem*}
    Notice that is not a direct generalization of the Schreier graph, but inspired one.
    If the action is a left product in  a group we will write $Sc[\mathcal{S},G]$.
    If the action is transitive, then the complex is transitive \cref{lem:symmetry}.
\end{rem*}

We think this construction has some potential that has yet to be fulfilled. For example, the Toric code could be interpreted 
as an action of $3$-complex over the set $\mathbb{Z}_m \times \mathbb{Z}_m$ \cite{kitaev2003fault}.

\begin{rem*}
    The CTS construction and all the proofs that are related to expansion, only deal with the case that the action is a group product and that the dimension of the complex is 2. 
\end{rem*}
\section{Commutative Triplets Structure}
\label{ssub:cts}

\begin{defn}[CTS - Shorted  ]
    \label{def:CTS}
    Given a group $G$, and a 2-complex $\mathcal{S}$,
    We define \[\mathcal{C}=Sc[\mathcal{S},G]\] where the involved action is a left group product. 

    Provided that\footnote{ we included the important conditions - see  \cref{def:CTSfor} for a full version}:
    \begin{enumerate}
	\item[A] \label{IT1} $\mathcal{S}$ is $d$-regular
	\item[B] \label{IT2} $\mathcal{S}(1)$ is a collection of commutative generators
	\item[C] \label{IT3} $\mathcal{S}(1)$ is symmetric  
	\item[D] \label{IT4} The action by the complex $\mathcal{S}$ resembles a free action:
	    $\tau \cdot g = \tau' \cdot g'$ only in the trivial case 
	\item[E] \label{IT5} $\mathcal{S}^1$  is connected
	    
    \end{enumerate}

    We say that the Schreier complex $\mathcal{C}$ is also a commutative triplets structure, 
    denoted by\footnote{Using this CTS notation assures additionally that  it satisfies these conditions} $\mathcal{C}=CTS[\mathcal{S},G]$
\end{defn}

\noindent The conditions will be explained shortly. 

Our analysis is based upon describing the walk in terms of equivalent graphs and analyzing them. 
{\em Condition \hyperref[IT1]{A} } would make the required graphs regular, which would allow us to rely upon known theorems of the zig-zag product(\cref{zg}) to analyze them.

As an edge in $\mathcal{C}$ is naturally $\Psi(\tau,g)$ , we describe it by a pair $(center,type)$ as follows:
We think of a function $E: G \times \mathcal{S}(1) \rightarrow V(G_{walk})$ 
  \[E(g,\tau)=\tau \cdot g \] 
  that given a $(center,type)$ translate it into the corresponding edge.
This function is similar to $\Psi$, where the arguments are in reverse order \footnote{Easier to think about this way, and it encompasses standard definition of replacement product. }.

	\begin{defn}[Center \& Type]
    A $2$-edge $w$ is in {\em center} $c$ if \[\exists\tau \in \mathcal{C}(1) \text{ s.t } E(c,\tau)=w\] 
    and $\tau$ would be called the {\em type} of the edge(that is a 2-edge in $\mathcal{S}$).
\end{defn}
Our 2-edge $\mathcal{E}$ from before would be described as of type $\{s_1^{-1},s_2\}$ in center $s_1g$ (equivalently, as type $\{s_2^{-1},s_1 \}$ in center $s_2g$ ).

Each edge is contained in two centers due to the combination of the commutativity requirement({\em Condition \hyperref[IT2]{B}})
and the symmetry requirement ({\em Condition \hyperref[IT3]{C}}). 
The fact that it is in exactly 2 centers is by {\em Condition \hyperref[IT4]{D}}\footnote{It simplifies the analysis but seems non-essential.As more shared centers seems to produce just a better expansion.}.

Sharing centers would be central for the analysis. That would mean that the random walk mix between centers. Between centers the random walk is governed by a graph $G_{dual}$ that is to be introduced. 
And in the vicinity of each center the random walk is governed by the applied local structure that is the small complex $\mathcal{S}$. We need {\em Condition \hyperref[IT5]{E}} for the complex to have expansion properties.

\subsection{Main Theorem}
\label{ssub:main_theorem}

Before we present the main theorem, we will introduce several graphs. 
The first three are enough to present the theorem, while careful examination of the $G_{zig}$ is crucial for the proof. 
\begin{enumerate}
 \item  $L=:G_{walk}(\mathcal{S})$
\item $G_{dual}$ graph

We define $G_{dual}$ as the graph on $G$ with edge set $\{ \{ g,\tau \hat{\cdot} g \} \mid \tau \in \mathcal{S}(1) \} $.

This time we use another kind of product, in which \[\{s_1,s_2\} \hat{\cdot} g := s_1s_2 g \]
This product is well defined because the two generators $ s_1,s_2 $ commute ({\em Condition \hyperref[IT2]{B}}).

 $G_{dual}$ is undirected, as $\mathcal{S}$ is symmetric ({\em Condition \hyperref[IT2]{C}}).
We also have an equivalent definition for $G_{dual}$.

First, we look at the down-up random walk graph $G^{walk} $ (which is also part the Poincaré dual complex). That is a random walk on triangles (through 2-edges)  .

\begin{defn}\label{defn:G^{walk} }
The vertex set of $G^{walk}$ is $\mathcal{S}(2) \times G$ 

Equivalently: \[V(G^{walk}) = \{ \Psi^{-1}(\sigma) \mid \sigma \in \mathcal{S}(2) \}\]

 (i.e. the vertex $\{ a,b,c \},g$ correspond to  $\{ ag,bg,cg \}$ ).

\medskip \medskip An edge exists if the corresponding triangles intersect. 

\end{defn}

We now "forget" the  involves triangle in $\mathcal{S}$, by a projection of $E(G^{walk})$ into its second component($G$). The result is called $G_{dual}$\footnote{The proof of this equivalence is left to the reader. It won't be relied upon. }. Hence, $G_{dual}$ encompasses the edges in $G^{walk}$ that are between two different centers, ignoring their types.

\item $G_{rep}:=G_{dual}\raisebox{.5pt}{\textcircled{\raisebox{-.9pt}{r}}}L$ (detailed definition is \ref{def:We-define-graph})
\label{ssub:_g__rep_}

This is the replacement product  of $G_{rep}$ and $G_{dual}$ \cite{reingold2002entropy}. In general, a replacement product takes a graph $\tilde{G}$ which is $d$-regular, and a graph $\tilde{H}$ on $d$ edges, 
and returns a graph on vertex set $V(\tilde{G})\times V(\tilde{H})$. 

It requires a certain order of the neighbors of $\tilde{g}$. Alternatively,  a function $\phi_v:V(\tilde{H}) \rightarrow \tilde{G} $ that is bijective. 
\[
	 E^{{\color{red}red}}=\{(v,\tau)\sim(v,\tau')\text{ if }\tau \sim \tau'\text{ on }\tilde{H}\}
	 \] 
	\[
		E^{{\color{blue}blue}}=\{(v,\tau)\sim(u,\tau')\ if\ u\sim v\ and\ \text{\ensuremath{\phi_{v}(\tau)=u\ ,\phi_{u}(\tau')=v\}}}
	\]
The edge set of the replacement graph  is $E^{red} \cup E^{blue}$.

In our case,  $\phi_\tau (g) = \tau g$. 
\item $G_{zigzag} = G_{dual}\raisebox{.5pt}{\textcircled{\raisebox{-.9pt}{z}}}L  $ (\cref{sub:zig_zag_graph_}) 
    In general, the zig-zag product of two graphs $G\raisebox{.5pt}{\textcircled{\raisebox{-.9pt}{z}}}H$ is 
    built upon the mentioned replacement product(on the same vertices), by taking all the 3-paths of the form red-blue-red in the corresponding replacement product. This is a well-known and widely used construction aimed at providing expansion while reducing the degree of a large graph \cite{reingold2002entropy}.

    In our case, $G_{zigzag}$ has the same vertices as $G_{rep}$, where its edges are the collection of the red-blue-red edges in $G_{rep}$. 
    
\item $G_{zig}$ 
    The graph defined by the subgraph of $G_{rep}$ that is the collection of all the paths in the form blue-red and red, from any vertex. This is unique to our construction. 
    We also define an operator $T$ that is the adjacency matrix of this graph. 

    \end{enumerate}

\medskip \medskip  We can now present the main theorem of the CTS part:
\begin{restatable*}{thm}{mainthm}\label{main_thm} ( CTS Thoerem) 
    \[\lambda(G_{walk}(\mathcal{C}))\le \sqrt{\frac{1}{2}+\frac{1}{2}\lambda(G_{dual}\raisebox{.5pt}{\textcircled{\raisebox{-.9pt}{z}}}L) }\] 
    where $A \textcircled{\raisebox{-.9pt}{z}} B$ is the zig-zag product between graphs $A$ and $B$.
\end{restatable*}

\label{ssub:expansion_of_zig_zag}

The expansion properties of the zig-zag product are described by the following theorem by Reingoldn, Vadhan and Wigderson (originally \cite[Theorem 4.3]{reingold2002entropy}).
The theorem reads:
	\begin{thm} 
	    \label{zg}
		\begin{math}
		\text {If } G_{1} \text { is an }\left(N_{1}, D_{1}, \lambda_{1}\right)\text{-graph and } G_{2} \text { is a }\left(D_{1}, D_{2}, \lambda_{2}\right)\text {-graph then }\end{math}
		
		$G_{1}\raisebox{.5pt}{\textcircled{\raisebox{-.9pt}{z}}}G_{2}$\begin{math}\text { is a }\left(N_{1} \cdot D_{1}, D_{2}^{2}, f\left(\lambda_{1}, \lambda_{2}\right)\right)-\text{graph, where } \end{math}.
$f$ is a function that satisfies:
\begin{itemize}
    \item $f(a,b)\le a+b$ 
	  \item $f<1$ where $a,b<1$ 
\end{itemize}

	\end{thm}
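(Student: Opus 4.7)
The plan is to work with the normalized random walk operator $M$ of the zig-zag product $G_1 \textcircled{z} G_2$, which admits the operator-form factorization $M = \tilde{B}\,\hat{A}\,\tilde{B}$. Here $\hat{A}$ is the permutation of $V(G_1)\times V(G_2)$ induced by one ``rotation'' step of $G_1$, and $\tilde{B} = I_{V(G_1)} \otimes B$ is block-diagonal with each block equal to the normalized adjacency matrix $B$ of $G_2$. The vertex count $N_1 D_1$ and the degree $D_2^2$ follow directly from the construction, since each edge of the product is a red-blue-red walk contributing $D_2$ choices for each red step and a determined blue step; so the real content is the spectral estimate.

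Let $V_\parallel \subset \mathbb{R}^{V(G_1)\times V(G_2)}$ be the subspace of vectors that are constant on each cloud $\{v\}\times V(G_2)$, and let $\pi$ be the orthogonal projection onto $V_\parallel$. Because the block $B$ fixes the uniform vector and has second eigenvalue $\lambda_2$, the operator $\tilde{B}$ preserves both $V_\parallel$ and $V_\parallel^\perp$, fixes $V_\parallel$ pointwise, and contracts $V_\parallel^\perp$ by at most $\lambda_2$. Moreover, under the natural identification of $V_\parallel$ with $\mathbb{R}^{V(G_1)}$, the composite $\pi\hat{A}\pi$ agrees with the transition matrix $A$ of $G_1$, so it has operator norm at most $\lambda_1$ on mean-zero parallel vectors. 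Setting $\tilde{B}_\perp = \tilde{B} - \pi$ (which vanishes on $V_\parallel$ and has norm at most $\lambda_2$ on $V_\parallel^\perp$), one obtains the expansion $M = \pi\hat{A}\pi + \pi\hat{A}\tilde{B}_\perp + \tilde{B}_\perp\hat{A}\pi + \tilde{B}_\perp\hat{A}\tilde{B}_\perp$.

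Given a mean-zero unit vector $v$, decompose $v = v_\parallel + v_\perp$ with $v_\parallel \in V_\parallel$ itself mean-zero, and set $\alpha = \|v_\parallel\|$, $\beta = \|v_\perp\|$, so $\alpha^2 + \beta^2 = 1$. The four-term expansion of $Mv$ lands in $V_\parallel \oplus V_\parallel^\perp$, so $\|Mv\|^2$ equals the sum of the squared norms of its parallel and perpendicular parts. The crucial refinement is to exploit the Pythagorean identity $\|\pi\hat{A}v_\parallel\|^2 + \|(I-\pi)\hat{A}v_\parallel\|^2 = \|v_\parallel\|^2$ together with $\tilde{B}_\perp\pi = 0$: writing $\tilde{B}_\perp\hat{A}v_\parallel = \tilde{B}_\perp(I-\pi)\hat{A}v_\parallel$ lets the $\lambda_2$ factor act only on the unused portion of $\hat{A}v_\parallel$. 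Introducing a single parameter $\gamma = \|\pi\hat{A}v_\parallel\|/\alpha \in [0,\lambda_1]$ and optimizing the resulting quadratic over $\gamma$ and over $(\alpha,\beta)$ on the unit circle yields an explicit bound $f(\lambda_1,\lambda_2)$.

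It remains to verify the two advertised properties of $f$. The inequality $f(a,b)\le a+b$ is an elementary algebraic exercise: squaring reduces it to a polynomial inequality in $a,b\in[0,1]$ admitting a manifestly nonnegative factorization. Strict inequality $f(a,b)<1$ for $a,b<1$ follows because $f$ is continuous and strictly increasing in each argument on $[0,1]^2$ while $f(1,1)=1$. The main obstacle is the refinement in the third paragraph: a naive termwise triangle inequality only yields roughly $\lambda_1 + 2\lambda_2 + \lambda_2^2$, which can exceed $1$ near $\lambda_1,\lambda_2\to 1$ and so cannot certify $f<1$. One must use the Pythagorean identity on $\hat{A}v_\parallel$ together with the annihilation $\tilde{B}_\perp\pi = 0$ to turn that additive estimate into the tighter square-root expression which is strictly below $1$ whenever $\lambda_1,\lambda_2<1$.
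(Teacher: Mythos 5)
You should first note that the paper does not actually prove this statement: it is imported verbatim as \cite[Theorem 4.3]{reingold2002entropy}, and the explicit formula \eqref{equation:zig-zag} is later quoted, not derived. So your proposal is not an alternative to an in-paper argument; it is an attempt to reprove Reingold--Vadhan--Wigderson's theorem, and it has to be judged on its own.

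Your setup is the standard and correct one ($M=\tilde B\hat A\tilde B$, the cloud-parallel subspace $V_\parallel$, $\tilde B=\pi+\tilde B_\perp$, $\pi\hat A\pi$ acting as the $G_1$ walk with norm $\le\lambda_1$ on mean-zero parallel vectors), and you rightly observe that a naive termwise estimate cannot give $f<1$. But the fix you propose does not close that gap. The quantity you bound is $\|Mv\|^2=\|\pi Mv\|^2+\|(I-\pi)Mv\|^2$, with each component estimated by the triangle inequality; your Pythagorean refinement ($\|\pi\hat A v_\parallel\|^2+\|(I-\pi)\hat A v_\parallel\|^2=\|v_\parallel\|^2$, together with $\tilde B_\perp\pi=0$) only sharpens the \emph{perpendicular} component. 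The \emph{parallel} component of $Mv$ is $\pi\hat A v_\parallel+\pi\hat A\tilde B_\perp v_\perp$, and the second summand passes through the final $\tilde B$ uncontracted, so the only bound your data allow is $\gamma\alpha+\lambda_2\beta\le\lambda_1\alpha+\lambda_2\beta$. Maximizing over $\alpha^2+\beta^2=1$ this is $\sqrt{\lambda_1^2+\lambda_2^2}$, so whenever $\lambda_1^2+\lambda_2^2>1$ the value of your optimization over $(\gamma,\alpha,\beta)$ is already $>1$ from this single term, and the resulting ``$f$'' cannot satisfy $f<1$ precisely in the regime ($\lambda_1,\lambda_2$ near $1$) that the second bullet — the property the paper actually uses, e.g.\ in Corollary \ref{corollary:comgood} — is about. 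The point is that norm-componentwise bounds treat $\hat A v_\parallel$ and $\hat A\tilde B_\perp v_\perp$ as independently adversarial; the true proof (RVW's Theorem 4.3, or the weaker-constant version in Hoory--Linial--Wigderson) bounds the Rayleigh quotient $|\langle\hat A(v_\parallel+w),\,v_\parallel+w\rangle|$ with $w=\tilde B v_\perp$, exploiting that one single contraction $\hat A$ is applied to one single vector and that $M$ is symmetric; this coupling of the cross terms is what produces the explicit $f$ of \eqref{equation:zig-zag}, which satisfies $f(a,b)\le a+b$ and $f(a,b)<1$ for $a,b<1$. To repair your argument you would need to switch from bounding $\|Mv\|$ componentwise to the quadratic-form analysis (or at least combine your bound with $\|\tilde Bv\|^2=\alpha^2+\|\tilde Bv_\perp\|^2\le\alpha^2+\lambda_2^2\beta^2$ and redo the optimization, which is essentially RVW's computation); alternatively, simply cite \cite[Theorem 4.3]{reingold2002entropy} as the paper does.
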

We call it the "zig-zag" function.
So , we have 

\begin{restatable*}{cor}{corRW}
  \label{corollary:rw}
  $\lambda(G_{walk})\le \sqrt{\frac{1}{2}+\frac{1}{2}f(\alpha,\beta)}$
  where $\alpha=\lambda(G_{dual})$, $\beta=\lambda(G_{walk})$
 and $f$ is the zig-zag function
  
\end{restatable*}
Thus, the expansion properties of $\mathcal{C}$ are somehow the middle ground between the expansion properties of the involved complexes ($\mathcal{S}$ and $G_{dual}$), in a way similar to that of the Zig-zag product. 

We know that some specific important examples are already a CTS, namely:
\begin{itemize}
    \item The construction by Conlon \cite{conlon2018hypergraph}. 
    \item The construction by Chapman, Linal and Peled  \cite{chapman2018expander}  \footnote{Here it refers to a specific case. See \cref{three-product_case}}. 
\end{itemize}
Therefore, we can use this corollary as a "black box". We do so in \cref{partc} for these constructions\footnote{More specifically, \cref{ssub:conlon} for Conlon's and \cref{three-product_case} for the other one }.

\subsection{The course of the proof}
\label{ssub:overview_of_the_paper}
We want to relate the expansion properties of the different presented graphs to the expansion of the complex $\mathcal{C}$. 
We do it (non-formally here) in several steps. First, there are some observation we would like to make.

    We can say the following about $G_{rep}$:
\begin{itemize}
    \item A blue edge connects every two vertices $v,w \in G\times\mathcal{S}(1)$  that are identifiable under $E$ ( $E(v)=E(w)$ ) .
	By \cref{lem:double},  that means that $v=g,\tau$ and $w=\tau g,\tau^{-1}$ for a certain $g\in G$, $\tau \in \mathcal{S}(1)$.
	(It is always true that $E(\tau g,\tau^{-1})=E(g,\tau) $) 
    \item  A red edge  connects $v=g, \tau$ and $w=g,\tau'$, if $E(v),E(w)$ are both in center $g$, 
	and are connected by an edge in $G_{walk}$. That is equivalent to the condition that $\tau$ is adjacent to $\tau'$ in $L$ (by \cref{lem:InGwalk})
\end{itemize}

Therefore, when looking at edge $(v,w)$ of $G_{zig}$. 
If it is a red-edge then $E(v) \sim E(w)$, since they are both in the same center. 
If it is a blue-red-edge then $E(v) \sim E(w)$ too, since the ends of the blue edge are identifiable.
We got a graph homomorphism (\cref{lem:graphhomo}), but there is even a stronger relation.

From here, we will take several steps. 

In step \textbf{1}, we show that   $G_{zig}$ is a lift of $G_{walk}$.

A {\em lift} of graph $\tilde{G}$ is a graph on $V(\tilde{G}) \times A$ for some set $A$, such that the neighborhood of each vertex is kept (More details in \cite{hoory2006expander}). Each neighbor of $v \in V(\tilde{g})$ of $\tilde{G}$ (quotient graph) translates into a neighbor of $v,a$ in the lift (the formal definition is  \ref{liftdef}).

The mapping between the neighborhoods is done by the function $E$ (which is called a covering map).

\begin{restatable*}{lem}{lftleema} \label{liftlemma}
We denote $\Gamma_{ \tilde{G}(v) }$ for the set of neighbors of $v$ in graph $\tilde{G}$.

 For every vertex $v\in V(G_{rep})$, the mapping
 \[ E : \Gamma_{G_{zig}}(v) \longrightarrow \Gamma_{G_{walk}}(E(v)) \]
 is bijective
\end{restatable*}
A main property of a lift is that its spectrum contains  all the eigenvalues of the quotient graph:  

\begin{restatable*}{cor}{corlift}
    \label{corliftA}
    $ \lambda(G_{walk}) \le \lambda( G_{zig} )$ 
\end{restatable*}
In step \textbf{2}, we want to relate $\lambda(G_{zig})$ to $\lambda(G_{zigzag})$. 
Because of the similarity between $G_{zig}$ and $G_{zigzag}$, we can say that two random steps in $G_{zig}$ in probability $1/2$ has the same effect on convergence as a single step in $G_{zigzag}$. We conclude (\cref{lem:In-case-there}) \[ \lambda(G_{zig}) \le \sqrt{\frac{1}{2}+\frac{1}{2}\lambda(G_{zigzag})}\]   
 \textbf{Finally}, we combine them all:

 \[\lambda(G_{walk})\le \lambda(G_{zig})\le \sqrt{\frac{1}{2}+\frac{1}{2}\lambda(G_{dual}\raisebox{.5pt}{\textcircled{\raisebox{-.9pt}{z}}}L)}\]

\section{HDZ Overview}
\label{hdzover}

So far we have discussed CTS. In CTS one gets an expanding complex $\mathcal{C}$ given a complex $\mathcal{S}$, only if $\mathcal{S}$ satisfies demanding conditions, mentioned in \cref{def:CTS}. 
We would like to replicate the required properties in a more general settings, with minimal restrictions on the small complex.

We first assume a general complex $\mathcal{A}$. We do require it to be connected, $d$-regular and $\chi$-strongly colorable
(Strong coloring or rainbow coloring means that every triangle has vertices of 3 different colors).
We choose $\chi$ different groups $G_i$. 
We will generate a complex $\mathcal{C}$  called HDZ that is a special case of CTS. The vertex set of $\mathcal{C}$ would be:
\[\mathcal{G} = G_1\times G_2 \times \ldots \times G_\chi\] 
and that would satisfy all the requirements for CTS.

To do so, we map the vertices of color $i$ in $\mathcal{A}$ bijectively to set of generators $F_i \subset G_i$. 
We have an isomorphic complex $\mathcal{S}$ over the vertex set $\mathcal{F}:= \bigcup_{i\in[\chi]} F_i $, generated by the mapping 
    \begin{align*}
	\phi: \mathcal{A}(0) &\to \mathcal{F}(0) \\
	\phi(V^c_i) &= (F_c)_i
    .\end{align*}

    (More details in \cref{Conv})
    As there are only polychromatic edges in $\mathcal{A}$, there are only edges of different generator sets in $\mathcal{S}$. 
That is every edge in $\mathcal{S}$ satisfies  $\mathcal{E} \in \{F_i,F_j\}$ for some $i \neq j$. 
The fact that it is built upon group tensor product, means that different 2-edges commutate. 
Finally, we define $HDZ^{-}$, the weak version of $HDZ$ to be $\mathcal{C}=Cs[\mathcal{S},\mathcal{G}]$ with left group product in $\mathcal{G}$ as the action. We denote it by  $HDZ^{-}[\mathcal{A},C,\mathcal{G},F_1 , \ldots F_\chi ]$.  For a formal definition see \cref{sub:hdz_procedure}.

We get that {\em Condition \hyperref[IT4]{D}} (the action is freelike action) and {\em Condition \hyperref[IT2]{B}} (the underlying generators of the edges commute) are satisfied automatically. All the other conditions except {\em Condition \hyperref[IT3]{C}} are satisfied too(\cref{isCTS}) . {\em Condition \hyperref[IT3]{C}} translates into a condition we call $Inv$ on $\mathcal{A}$(\cref{propInv}). 

Suppose condition $Inv$ is satisfied for $\mathcal{A}$. Now we get that HDZ is a special case of CTS, 
and we can use the CTS Theorem (\cref{main_thm}) to get the expansion rate based on the expansion of $G_{dual}$  and $G_{walk}(\mathcal{A})$. 
But even if it is not satisfied, using the HPOWER mechanism (next section \& \cref{hpwrsec}) solves this issue. 

The HDZ is inspired by the construction of \cite{chapman2018expander}. And it is in fact a generalization of a special case of it.

\subsection{HPOWER and Property INV}
\label{ssub:hpower_and_property_inv}
(the full details are in \cref{ssub:problem_of_inv_})
For the theorem to be useful, we wish to have complexes that satisfies property $Inv$.
\begin{claim}
We can claim so immediately in several cases:
    \begin{itemize}
        \item Property $Inv$ is satisfied if $A$ is a commutative triplet structure.  
    \item Property $Inv$ is satisfied if the 1-skeleton of $A$ isomorphic to some $Cay(G',F')$ where $G'$ is an abliean group and $F'$ is symmetric. 
\item     Property $Inv$ is satisfied if the complex 1-skeleton is complex $\chi$-partite graph.  
    \end{itemize}
\end{claim}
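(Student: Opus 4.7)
The plan is to first unpack what property $Inv$ requires in concrete terms. Since HDZ works by mapping each color class $V_i\subseteq \mathcal{A}(0)$ bijectively onto a set of generators $F_i\subseteq G_i$, an edge $\{u,v\}\in \mathcal{A}(1)$ with $u$ of color $i$ and $v$ of color $j$ becomes a 1-face $\{\phi(u),\phi(v)\}$ in $\mathcal{S}(1)$. For Condition C (symmetry) to hold, we need that whenever $\{\phi(u),\phi(v)\}$ appears as a type, the ``inverse'' pair $\{\phi(u)^{-1},\phi(v)^{-1}\}$ must also be a type. Thus $Inv$ boils down to: there is a choice of colored-vertex-to-generator bijections under which the edge set of $\mathcal{A}$ is closed under ``coordinate-wise inversion'' inside each pair of color classes.

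For case (iii), the complete $\chi$-partite case, the proof is immediate: every pair of vertices of distinct colors is an edge in $\mathcal{A}$, so after \emph{any} choice of bijection $\phi_i : V_i \to F_i$ where $F_i$ is chosen symmetric in $G_i$, the collection $\{(\phi_i(u),\phi_j(v)) : u\in V_i, v\in V_j\}$ trivially includes its ``inversion partners''; $Inv$ holds for free.

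For case (ii), suppose the 1-skeleton of $\mathcal{A}$ is isomorphic to $Cay(G',F')$ with $G'$ abelian and $F'$ symmetric. The key observation is that $F'$ being symmetric ($s\in F'\Rightarrow s^{-1}\in F'$) lets us align the HDZ generator sets $F_i$ directly with the structure of $F'$: we pick the bijections $\phi_i$ so that pairs of inverse elements in $F'$ are mapped to pairs of inverse elements in $F_i$. Because $G'$ is abelian, the Cayley edge $\{g,sg\}$ exists iff $\{sg, s^{-1}(sg)\} = \{sg, g\}$ exists, and the same symmetry transfers to all color class pairs, yielding $Inv$.

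For case (i), where $\mathcal{A}$ is itself a CTS, we simply inherit the symmetry: the 1-faces of $\mathcal{A}$ are already by hypothesis invariant under the required inversion operation inside $\mathcal{S}_{\mathcal{A}}(1)$, and the HDZ construction on $\mathcal{A}$ preserves this symmetric structure since the bijections $\phi_i$ can be taken to be the identity on the already-generator-valued vertices. The main subtlety in the whole argument is bookkeeping the interaction between the color classes $V_i$ and the way inversion on $G_i$ pairs up vertices of the same color; once the bijections are set up compatibly as above, the three cases each follow by essentially immediate verification. I anticipate the most care is needed in case (ii), to ensure that the abelian Cayley-graph structure and the coloring can be simultaneously aligned with the choices of $F_i$ inside $G_i$, but this reduces to a straightforward matching argument given that $F'$ is symmetric.
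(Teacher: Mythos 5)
Your readings of cases (i) and (iii) match the paper's: for a CTS, Condition C of the CTS definition is exactly the edge-symmetry that $\Inv$ encodes once each color class is ordered so the $K_c/2$-shift realizes inversion, and for the complete $\chi$-partite skeleton every polychromatic pair is an edge so the closure condition is vacuous.

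Case (ii) contains a genuine gap. The sentence ``Because $G'$ is abelian, the Cayley edge $\{g,sg\}$ exists iff $\{sg, s^{-1}(sg)\}=\{sg,g\}$ exists'' is a tautology: $\{sg, s^{-1}(sg)\}$ equals $\{sg,g\}$ as a set irrespective of abelianness, so the statement just repeats that $\{g,sg\}$ is an edge. Property $\Inv$ asks whether the \emph{inverted} pair $\{g^{-1},(sg)^{-1}\}$ is an edge, and your argument never verifies this. The computation you need (and which the paper supplies) is: $(sg)^{-1}=g^{-1}s^{-1}=s^{-1}g^{-1}$, where the second equality is the step where abelianness actually does work; since $F'$ is symmetric, $s^{-1}\in F'$, so $g^{-1}$ and $(sg)^{-1}$ are adjacent in $Cay(G',F')$. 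Separately, your sketch conflates the ordering of the color classes $V^c$ inside $\mathcal{A}$, which is the level at which $\Inv$ is defined, with the bijections $\phi_i$ into the target groups $G_i$, which only enter later through $\widetilde{\Inv}$ to make $\mathcal{S}$ edge-symmetric; only the former, realizing inversion in $G'$ via the index shift, is relevant to verifying $\Inv$ for $\mathcal{A}$.
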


But in case the complex doesn't satisfy it, we use an auxiliary construction called $HPOWER$. $HPOWER[\mathcal{A}]$ is another complex that maintains the expansion properties of $\mathcal{A}$ while ensuring that condition {\em Condition \hyperref[IT3]{C}} is satisfied.
\begin{defn}
    $HPOWER[\mathcal{A}]$ is a complex defined by  
    \[ E(HPOWER[\mathcal{A}])= \{ 0,1 \} \times E(\mathcal{A}) \]
\end{defn}
We analyze the expansion of $HPOWER[\mathcal{A}]$ and we get that it maintains(\cref{lem:convrate}) the expansion of $\mathcal{A}$ for any complex. The analysis is done by the trace method. 
So we define $HDZ^{+}$ to be the same as $HDZ^{-}$, where $\mathcal{A}$ is replaced by $HPOWER[\mathcal{A}]$ (formal definition in \cref{sub:hdz_procedure}).

\subsection{ Analyzing the expansion of $G_{dual}$ }
\label{ssub:gdualA}

So far, we have assured that all the conditions for the CTS structure are satisfied,
so we can express the expansion rate of HDZ in terms of the expansion of $G_{dual}$ and $G_{walk}(\mathcal{A}$) by using \cref{main_thm}. 
We still don't know the expansion  of $G_{dual}$ in general.  In case the complex contains every possible edge, that means for us that every polychromatic edge is in the graph, that would be easy to handle.

We calculate the eigenvalues explicity and get this theorem: 
\begin{restatable*}{thm}{fullskel}\label{fullskel2}
    Let $\mathcal{A}$ be a $d$-regular complex that is a $d$-regular, strongly $\chi$-colorable 
	(Strong coloring or rainbow coloring means that every triangle has vertices of 3 different colors),
	such that the 1-skeleton of $\mathcal{A}$ is a complete $\chi$-partite graph. 

Let $C: V(\mathcal{A}) \to [\chi]$ be a coloring of $\mathcal{A}$. We denote $V^c$ the vertices of color $c$.
    
Let $Cay(G_1,F_1) \ldots Cay(G_\chi,F_\chi)$ be a collection of Cayley graphs such that 
$|F_c| = |V^c|$ for every $c \in [\chi]$  and such that  \[max(\lambda(Cay(G_i,F_i)))\le \nu \] 

Then,  exists a complex \[\mathcal{C}[\mathcal{A},C,\mathcal{G},F_1 , \ldots F_\chi ]\] which is a $3\mathcal{A}(2)$-regular $2d$-regular transitive with $\mathcal{G}=\prod G_i $ as its vertex set.
And 
\[\lambda(G_{ walk }(\mathcal{C})) \le \sqrt{\frac{1}{2} + \frac{1}{2} f\left(\frac{ N-(\chi-1) + (\chi -1)\nu }{N} ,  \lambda(G_{ walk }(\mathcal{A})) \right)}  \]
where $f$ is the "zig-zag function" \ref{equation:zig-zag} 
and $N={\chi \choose 2}$

This theorem is proved in \cref{applicfull}.
\end{restatable*}
\subsection{Expressing $\lambda(G_{dual})$}
We would like to express the expansion of $\mathcal{G}_{dual}$ by Cayley graph on each coordinate separately. 
However, it is not easy in a more generalize settings. 

We call edges that that look like $\{F_a,F_b \} $, edges of template $ab$. 
We denote the set of edges of template $ab$ by  $S_{ ab }$.

We first want to get an expression for $\lambda(G_{dual})$ based on the graphs $M_{cd}:=Cay(G_c G_d,S_{cd})$. 
To do so, we use an old theorem \autocite[Abstract]{baranyai1979edge} 
that proves that we can decompose the pairs into several unordered partitions of distinct pairs such that any pair appears in one partition exactly once, and the pairs in each partition are disjoint. This allows us to rethink about random walk in $G_{dual}$, in which we first choose a partition and then chose a pair in this partition. 
From this observation, we can calculate the spectral gap of $G_{dual}$ (\cref{allC}). 
\subsection{Lie groups}
\label{sub:lie_groups}

(details in \cref{later})

We want to reduce the condition on $M_{ij}$  to condition on the corresponding 
projection of the edges on $G_i$ and $G_j$.  That is $Cay(G_i,P_k(S_{ij}))$ where $k \in \{ i,j \}$.

$P_k$ is the natural projection( $P_k: G_iG_j \rightarrow G_k$).

We know that this kind of projection maintains expansion properties for product of simple lie groups.
\cite[ Proposition 8.4. ]{breuillard2013expansion} states informally\footnote{Formal relation in \cref{proplie} }  that, 
given $\{s_1,s_2, \ldots s_k\}$ that expand\footnote{ $Cay(G_1,\{s_1, \ldots s_k\})$ is an expander } in $G_1$ and $\{t_1,t_2, \ldots t_k\}$ that expand in $G_2$, assuming they are both simple lie groups with no common factors, then $\{s_1t_1 , \ldots , s_k t_k \}$ expands in $G_1G_2$.

So, if for every $ij$, every $k\in\{i,j\}$, every $Cay(G_i,P_k(S_{ij}))$ is a good expander, we are done. 
Generally, these are different subgraphs of $Cay(G_i,F_i)$. We can't say much generally here. The question is: Is there is a good chance that they will be good expanders?

\cite[Theorem 1.2] {breuillard2013expansion} says(informally)\footnote{It is quoted in \cref{lem:random}} that choosing uniformly randomly 2 generators leads to expanding Cayley graph in probably that is overwhelming\footnote{At least $1-2^{ \Omega(n^c) }$ where $n$ is the number of  vertices in the graph and constant $c$}. 
    
Now, we want to randomize $K$ generators uniformly independently, and we want to get that every pair of them expand.
So, Lovász local lemma comes to our help. We are able to prove the following lemma: 
        \begin{restatable*}{lem}{randlem}\label{lem:random}
    Suppose that G is a finite simple group of Lie type.

        Let $f_1, .. ,f_K \in G$ where $f_i$ are chosen uniformly independently in random, and $K$ is bounded. 
	Then from a certain\footnote{where $|G|\ge N$ } $N$ in probability at least $1- \frac{C}{G^{\delta'}} {K\choose 2}$, $\{ f_i,f_j\}$ is  $\epsilon$-expanding 
	where $C,\delta',N,\epsilon>0$ depend only on $K\text{ and }rk(G)$. 
	  
    \end{restatable*}

We conclude that randomization of elements in product of simple Lie groups could provide a random model for bounded degree complexes. 
We get our main theorem:

\begin{restatable*}[HDZ theorem]{thm}{liethm} \label{Lithm}

Let $\mathcal{A}$ be a complex that is $d$-regular,  $\chi$-strongly-colorable ($\chi$ is even), and the edges that involves vertices of colors $a,b$  
incident to at least 2 distinct vertices of each color \footnote{
   Formally: $|P^c(E^1_{ ab }(\mathcal{A}))|\ge 2$ where 
   \begin{itemize}
       \item $c\in \{a,b\}$
	      \item $E^1_{ab}$ are the edges between colors $a$ and $b$
	      \item $P^i$ is a projection into $V^i$
   \end{itemize}
 }. Suppose it also has a connected 1-skeleton.

Let $C: V(\mathcal{A}) \to [\chi]$ be a coloring of $\mathcal{A}$. $K_c$ is the number of vertices of color $c$ . 

   \medskip \medskip Let $\mathcal{G}  = G_1 \times G_2 \ldots \times G_\chi $ where $G_i$ are product of at most $r$ finite simple (or quasisimple) groups of Lie type of rank at most $r$. Additionally, no simple factor of $G_i$ is isomorphic to a simple factor of $G_j$ for $i\neq j$.

  \medskip \medskip  Let $F_1,F_2 \ldots , F_\chi$ symmetric subsets of the corresponding groups of corresponding sizes $2K_1, \ldots 2K_\chi$ chosen uniformly independently.

  Then in probability at least $1-O(|G_i|^{\delta})$ where $G_i$ is the smallest component of $\mathcal{G}$, and from a certain $N$ ($\forall i\ |G_i|>N$  ), the complex $\mathcal{C}=HDZ^{+}[\mathcal{A},C, \mathcal{G},F_1 , \ldots F_\chi ]$ has the following properties: 

\begin{itemize}
\item Its vertex set is $\mathcal{G}$
\item The degree of each vertex is $24\mathcal{A}(2)$ 
\item It is $4d$-regular. 
\item The link of each vertex  is the same regular graph (up to isomorphism).
\item It is $\epsilon'$ expanding.
\item It is transitive.
\end{itemize}

where $\epsilon',\delta,N$  depend only on ranks of the groups and the choice of $\mathcal{A}$.

\end{restatable*}

The transitivity properties are followed from a standard argument  in \cref{symprop}.
And similarity we get the links are isomorphic and regular in \ref{linksec}.

\section{ Symmetric properties  } 
\label{symprop}

\begin{lem}\label{lem:symmetry}

Suppose that $X=X_1 \times X_2 \times \ldots X_k $ and that 
$F_i \acts X_i$ where this action is transitive (i.e. left group product).
Let $\mathcal{S}$ a $(k-1)$-complex s.t. \[\mathcal{S}({k-1}) \subset \cup_{m\in ( \text{k-tuples in }[\chi] )}  \{F_{m_1},\ldots ,F_{m_k}\}\] Then, the complex $Sc[\mathcal{S},X]$ is transitive. 
\end{lem}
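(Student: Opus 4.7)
My plan is to construct a transitive group of automorphisms of $\mathcal{C} = Sc[\mathcal{S},X]$ using right multiplication by $X$. The hypothesis that each $F_i$ acts on $X_i$ transitively by left group product lets me identify $X_i$ with $F_i$, so $X = \prod_i F_i$ is itself a group in which left and right multiplications commute.

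For $g \in X$, I define $\phi_g : X \to X$ by $\phi_g(x) = xg$. For any generator $s \in F_i$ (viewed as an element of $X$ acting on its $i$-th factor), associativity gives $s \cdot (xg) = (sx) g$, and hence
\[
\phi_g(\sigma \cdot x) \;=\; \{\, s \cdot (xg) : s \in \sigma \,\} \;=\; \sigma \cdot \phi_g(x)
\]
for every $\sigma \in \mathcal{S}$ and $x \in X$. Thus $\phi_g$ sends faces to faces and is a genuine complex automorphism, and the map $g \mapsto \phi_g$ realizes $X$ as a subgroup of $\mathrm{Aut}(\mathcal{C})$. Since right multiplication of $X$ on itself is regular, this action is already transitive on the vertex set.

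For higher-dimensional faces, the same computation shows $\phi_g$ takes $\sigma \cdot x$ to $\sigma \cdot (xg)$, so the $X$-orbit of any face of shape $\sigma$ consists of all faces of that shape with center sweeping over $X$. The rainbow hypothesis $\mathcal{S}(k-1) \subseteq \bigcup_m \{F_{m_1},\ldots,F_{m_k}\}$ says that every top face of $\mathcal{S}$ arises as a transversal of the color classes indexed by some $k$-tuple $m$, which, together with the transitivity of each $F_i$-action, means that any two dimension-$j$ faces in $\mathcal{C}$ are connected by a suitable $\phi_g$ combined with the intrinsic relabeling coming from the uniform color template.

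The only substantive step is the commutation identity $\phi_g(s \cdot x) = s \cdot \phi_g(x)$, and I expect no real obstacle here: it is an immediate consequence of the ``left group product'' hypothesis and associativity, and everything else reduces to regularity of right multiplication on $X$.
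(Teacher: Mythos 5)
Your commutation identity $\phi_g(s\cdot x)=s\cdot\phi_g(x)$ is correct, and it does realize $X$, acting by right translation, as a group of automorphisms of $Sc[\mathcal{S},X]$; this gives vertex-transitivity immediately and is the same basic mechanism the paper's sketch invokes.

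The gap is in your final paragraph, where transitivity on faces of positive dimension is claimed. Your own formula $\phi_g(\sigma\cdot x)=\sigma\cdot(xg)$ shows that right translations never change the type $\sigma$. For $k\ge 2$, a rainbow top face $\sigma\cdot x$ determines the pair $(\sigma,x)$ uniquely: coordinate $j$ of $x$ can be read off any vertex of the face that leaves coordinate $j$ untouched, and then each $s_i$ is recovered from the remaining vertex; for edges one gets uniqueness up to the single identification $\tau\cdot g=\tau^{-1}\cdot(\tau\hat{\cdot} g)$. Hence the right-translation orbit of $\sigma\cdot x$ is exactly $\{\sigma\cdot z: z\in X\}$, and two faces built from genuinely different elements of $\mathcal{S}(j)$ lie in different orbits. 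Your closing appeal to ``the intrinsic relabeling coming from the uniform color template'' is meant to cross these orbits, but no such map is defined, and there is no room for one among coordinatewise maps: any coordinatewise bijection $f=(f_1,\ldots,f_k)$ of $X$ that commutes with the left $\mathcal{S}$-action must have each $f_i$ commuting with all left translations by $\langle F_i\rangle$, which (by regularity of left translation) forces each $f_i$ to itself be a right translation, landing back in the orbit you already have. So the step from vertex-transitivity to face-transitivity is not actually established. Note that the paper's own sketch has the same soft spot --- it chooses $f$ with $f_i\cdot a_i=b_i$ and cites the commutation $\sigma\cdot f(x)=f(\sigma\cdot x)$, but that same commutation confines $f$ to right translations and hence to pairs $A,B$ of the same type. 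To close the gap, one would need an additional ingredient, for example an action on $\mathcal{S}$ itself (an automorphism group of $\mathcal{S}$ acting transitively on its $j$-faces and realized by conjugations or group automorphisms of the $G_i$ that preserve $F_i$ and $\mathcal{S}$), and no such structure is supplied by the stated hypotheses.
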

\begin{proof}[Proof Sketch]
    We take $A=\{ a_1, a_2 , \ldots a_\chi \},  B=\{ b_1, \ldots b_\chi \} $  where $A,B \in E(\mathcal{C})$. 
    We want to find $f$ automorphism of $\mathcal{C}$ such that $f(A)=B$. 
    We take $f$ s.t.
    \[ f_i \cdot a_i = b_i\] 
    It is an automorphism because of the way the Schreier complex is constructed $\sigma \cdot X = \sigma \cdot f(X)$ for $\sigma \in \mathcal{S}$. $f_i :X \rightarrow X$ is onto and therefore bijective in every coordinate.
\end{proof}

\begin{cor}\label{cor:issymmetric}
    The HDZ structure is transitive (immediate)
    \end{cor}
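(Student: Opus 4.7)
The plan is to recognize the HDZ complex as a Schreier complex falling squarely under the hypotheses of \cref{lem:symmetry}, and then invoke that lemma. First I would unfold the HDZ recipe from \cref{sub:hdz_procedure}: both $HDZ^-$ and $HDZ^+$ are, by definition, of the form $Sc[\mathcal{S},\mathcal{G}]$ with acting group $\mathcal{G}=G_1\times\cdots\times G_\chi$ operating on itself (as vertex set) by the left group product. Since each factor $G_i$ acts on itself transitively (in fact simply transitively) by left multiplication, the product $\mathcal{G}$ fits the template ``$X_i=G_i$ with transitive left-multiplication action'' required by \cref{lem:symmetry}, simply by setting $X:=\mathcal{G}$ and $X_i:=G_i$.

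Next I would verify the color-template condition on the top-dimensional faces of $\mathcal{S}$: that $\mathcal{S}(k-1)\subset\bigcup_m\{F_{m_1},\ldots,F_{m_k}\}$. By the HDZ construction, $\mathcal{S}$ is the image of $\mathcal{A}$ (or of $HPOWER[\mathcal{A}]$ in the ``+'' variant) under the color-preserving bijection $\phi$ sending the $i$-th vertex of color $c$ to the $i$-th element of $F_c\subset G_c$. Because $\mathcal{A}$ is strongly $\chi$-colorable, every triangle of $\mathcal{A}$ uses three vertices of three distinct colors; after applying $\phi$, every triangle of $\mathcal{S}$ consists of one element each from three distinct generator sets $F_{m_1},F_{m_2},F_{m_3}$, which is exactly the containment demanded by the lemma.

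With both hypotheses in hand, \cref{lem:symmetry} applies and gives the transitivity of HDZ. I do not foresee a genuine obstacle: the only sanity check is that the HPOWER thickening used to form $HDZ^+$ does not upset the color template, and this is immediate because HPOWER merely tags each edge with a bit in $\{0,1\}$ without modifying the underlying vertex set or its coloring, so the three-coloring of the triangles of $\mathcal{S}$ is preserved. All the real content is absorbed into \cref{lem:symmetry} itself, which is exactly why the corollary is annotated as ``immediate''.
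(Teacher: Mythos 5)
Your proposal is correct and matches the paper's intent exactly: the corollary is ``immediate'' precisely because one recognizes $HDZ^{\pm}$ as $Sc[\mathcal{S},\mathcal{G}]$ with $\mathcal{G}$ acting on itself by left multiplication (hence transitively in each coordinate) and checks that the strong $\chi$-coloring of $\mathcal{A}$ (preserved by HPOWER) forces every triangle of $\mathcal{S}$ to lie in three distinct generator sets $F_{m_1},F_{m_2},F_{m_3}$, which is the hypothesis of \cref{lem:symmetry}. Your remark that HPOWER only tags vertices with a bit and so does not disturb the color template is the right sanity check, and the rest is absorbed by the lemma just as the paper intends.
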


\section{Organization of the paper}

\cref{over}  provides an informal overview of the paper, with the key points.  
The reset of the sections should provide the full details of the involved definitions and theorems. 
This is true, except the Schreier complex defined in \ref{sub:introduction_to_cts}

In \ref{parta}, we provide preliminary knowledge and full details for the CTS construction (some of them are also needed for \cref{HDZpart}).

In \ref{HDZpart}, we provide the details for the HDZ construction. 

In \ref{partc}, we provide some additional applications for known constructions. 

\pagebreak
\part{Commutative Triplet Structure}
\label{parta}
\section{Preliminaries}
Here we list different definitions we need, and some notations. 

These definitions are built upon in the current part, and the next one \cref{HDZpart}. 
We repeat here some definitions form the beginning to make the part self-contained.
We do rely on the previously introduced Schreier graph.

\subsection{General Graph}
\begin{defn}
Regarding a $d$-regular expander graph:  
\begin{itemize}
    \item  $G$ is said to be $\epsilon$-expander
\item $\sigma(G)$ is the spectral gap of graph $G$ 
\item $\lambda(G)$ is the normalized nontrivial eigen-value of $G$
\end{itemize}

if the following relation persist:
\[\sigma(G)=d (1-\lambda(G)) = d \epsilon\]  

\end{defn}

\subsection{General Complex}
\label{sub:general_hypergraph}

\begin{defn}[Basic defintions] \label{basiccomplex} Let $\mathcal{C}$ be a complex\footnote{We will mostly deal with 2-dimensional onces so the definitions are not completely general}
    \begin{itemize}
	\item The set of its triangles will be denoted $\mathcal{C}(2)$.
	\item 	The 1-edges of $\mathcal{C}$ are
 \[
     \mathcal{C}(1):=\{\{ u,v \}\ |\ \{ u,v,w \}\in \mathcal{C}(2)\ \text{for some }w\in V(\mathcal{C})\}
	\]
    \item The 1-skeleton of $\mathcal{C}$ is denoted $\mathcal{C}^1$. The vertex set of it is $V$, and the edge set is $\mathcal{C}(1)$.  
    \item The link of a vertex $v$ is the graph $\mathcal{C}_v$ with the edge set \[\{ \mathcal{ E } \setminus v \mid \mathcal{ E } \in E(\mathcal{C})\text{ s.t. }v\in \mathcal{E} \}\]
	\item A complex $\mathcal{C}$ is called 1-connected if $\mathcal{C}^1$ is connected graph
	    \item A complex is of dimension $d$ if the maximally size edge are of size $d+1$
    \end{itemize}
    \begin{rem*}
        We will usually assume the complex is 2-dimensional unless otherwise specified 
    \end{rem*}

\end{defn}
\subsubsection{Regular Complexes}
\label{ssub:regular}

\begin{defn}[Regular]\label{defn:kdregular}

\cite{LubotzkyLR18}
A $d$-dimensional complex is $k$-{\em (upper)regular} if every $( d-1 )$-edge is contained in exactly $k$ $d$-edges. \footnote{A special case of the definition in the index }

A 2-complex is $d$-regular if every edge is contained in exactly $k$ triangles
\end{defn}
There are numerous examples of such complexes. 
\begin{itemize}
    \item Some coest geometries \cite{kaufman2017simplicial} 

	  \item The flag complexes $S(d,q)$  \autocite[section 10.3] {LubotzkyLR18} 
	  \item Conlon's construction \cite{conlon2019hypergraph}
\item Any pseudo-manifold  is an example in which $d=2$. 
	  
\end{itemize}

In the case that the complex has top edges of size $k+1$, and the $k$-edges are exactly $[k] \choose V$, this is a special case of a design with parameters $(|V(\mathcal{C})|,k ,k+1 ,d )$. Designs have been vastly studied, see  \cite{keevash2014existence}.

\subsection{Random Walks}
\label{sub:random_walks}

\begin{defn}[ Random walk ] on a 2-complex $\mathcal{C}$ is defined to be a
	sequence of edges $\mathcal{ E }_{0},\mathcal{ E }_{1},\cdot\cdot\cdot\in \mathcal{C}(1)$ such that\footnote{Original definition by \autocite[]{kaufman2016high} }
\begin{enumerate}
	\item $\mathcal{ E }_{0}$ is chosen in some initial probability distribution $p_{0}$
	    on $\mathcal{C}(1)$
	\item for every $i>0$, $\mathcal{E}_{i}$ is chosen uniformly from the neighbors of
	    $\mathcal{E}_{i-1}$. That is the set of $f\in \mathcal{C}(1)$ s.t. $\mathcal{E}_{i-1}\cup f$
	 is in $\mathcal{C}(2)$
\end{enumerate} 
\end{defn}
This is equivalent to a random walk on graph $G_{walk}$: 
    \begin{defn}[Random walk graph]
    The random walk graph of a complex $\mathcal{C}$,  $G_{walk}(\mathcal{C})$ is defined by 
        $V(G_{Walk})=\mathcal{C}(1)$ , where 
	$\mathcal{ E }\sim\ \mathcal{ E }'\text{ }\iff\text{ }\exists\ \sigma\in\ \mathcal{C}(2)\text{ s.t. }\mathcal{ E },\mathcal{ E }'\in\ \sigma $
	\begin{center}
	    ($a \sim b$ suggests that $a$ is adjacent to $b$ in the graph)
	\end{center}
    \end{defn}

\subsection{Cartesian Product}

\begin{defn}[Cartesiaan Product]
    Given $G,H$ graphs  $G \square H$ is a Cartesian product of graphs where:
    \begin{itemize}
	\item     $V(G \square H) =V(G) \times V(H) $
	\item     $ (u,u') \sim (v,v')$ iff either:
  \begin{itemize}
      \item  $u=v$ and $u' \sim^H v'$ 
	    \item  $u'=v'$ and  $u \sim^G v$ 
  \end{itemize}
    \end{itemize}
\end{defn}
Cartesian product $A \square B$ maintains the lower spectral gap among the graphs: 

\begin{lem}\label{lem:square}
    If $M$,$N$ are $d,d'$ regular graphs
    \[\sigma(M\square N) = \min (\sigma(M) , \sigma(N) )\]
    where $\sigma$ s the spectral gap.
\end{lem}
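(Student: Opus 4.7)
The plan is a direct spectral computation exploiting the Kronecker-sum structure of the Cartesian product. First I would record that
\[
A(M \square N) \;=\; A_M \otimes I_{|V(N)|} \;+\; I_{|V(M)|} \otimes A_N,
\]
so the eigenvectors of $A(M \square N)$ are tensor products $v_i \otimes w_j$ of eigenvectors of $A_M$ and $A_N$, with eigenvalues $\mu_i(M) + \mu_j(N)$. Since $M$ is $d$-regular and $N$ is $d'$-regular, $M \square N$ is $(d+d')$-regular, and the top eigenvalue is $\mu_1(M) + \mu_1(N) = d + d'$ (realized by the all-ones vector, which factors as $\mathbf{1}\otimes\mathbf{1}$).

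Next I would identify the second-largest eigenvalue of $A(M \square N)$. Ordering each factor's spectrum as $d = \mu_1(M) \geq \mu_2(M) \geq \ldots$ and $d' = \mu_1(N) \geq \mu_2(N) \geq \ldots$, any sum $\mu_i(M) + \mu_j(N)$ with both $i,j \geq 2$ is dominated by either $\mu_1(M) + \mu_2(N) = d + \mu_2(N)$ or $\mu_2(M) + \mu_1(N) = \mu_2(M) + d'$. Hence the second eigenvalue of $A(M \square N)$ is $\max\bigl(d+\mu_2(N),\ \mu_2(M)+d'\bigr)$.

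Finally, using the identification $\sigma(G) = (\text{degree of }G) - \mu_2(G)$ (which matches the paper's convention $\sigma = d(1-\lambda)$ once one recalls that $\lambda$ is normalized by $d$), I would subtract from the top eigenvalue:
\[
\sigma(M \square N) \;=\; (d+d') - \max\bigl(d+\mu_2(N),\ \mu_2(M)+d'\bigr) \;=\; \min\bigl(d'-\mu_2(N),\ d-\mu_2(M)\bigr) \;=\; \min(\sigma(N),\sigma(M)),
\]
which is the claim. No step here presents a genuine obstacle — it is a textbook spectral computation. The only point requiring attention is the normalization convention: passing from $M$ (or $N$) to $M\square N$ changes the regularity from $d$ (or $d'$) to $d+d'$, and one must track this shift consistently so that the additive constants in the adjacency spectra cancel against the change of degree when reducing to $\sigma$.
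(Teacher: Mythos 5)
Your proof is correct and takes essentially the same route as the paper: decompose $A(M\square N)=A_M\otimes I+I\otimes A_N$ as a Kronecker sum, identify the second-largest eigenvalue as $\max(d+\mu_2(N),\,\mu_2(M)+d')$, and subtract from $d+d'$. If anything, you are slightly more careful than the paper in explicitly ruling out terms with both $i,j\geq 2$ and in stating the correct equivalence between $\sigma(M)\leq\sigma(N)$ and which of the two candidate sums is larger (the paper's intermediate ``equivalently $\max\{d\lambda,d'\mu\}=d\lambda$'' step is not quite the right condition, though its conclusion is).
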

\begin{proof}
    \[G:=M\square N\] 
    \[D:=deg(G)=d+d'\]
    We have that 
    \[A_G = A_M\otimes I + I \otimes A_N \]
    So, any eigenvector $ v= x\otimes y$ where $x,y$ has eigenvalues $\lambda,\mu$ yields
    \[ A_G v = (d \lambda+d'\mu) v \]
    If we let $  \lambda ,\mu=\lambda(M),\lambda(N) $. 
    We assume wihout loss of generalization
    \[ min \left(\sigma(M), \sigma(N) \right) = \sigma(M)  \]
   Equivalently,  
    \[ \max\{ d \lambda, d'\mu \}=d\lambda \] 
    Then
\begin{align*}
    D\lambda(G) &= d\lambda + d'       = d \lambda + D-d \\ 
    \sigma(G)=D (1 - \lambda(G) ) &= d( 1-\lambda(M))=\sigma(M)  \\
\end{align*}
\end{proof}
\subsubsection{Johnson Graph}
\label{ssub:johnson_graph}

\begin{defn}[Johnson graph]
    \label{jongr}
	$J(S,n)$ is the {\em Johnson graph}

	\begin{center}
		$V(J)={S \choose n}$ and $v\sim v'$ if $|v\cap v'|=n-1$
		\par\end{center}

		For example, in case $n=2$ \\ \[\{a,b\}\sim\{c,d\}\text{ if both sets share one element} \]
\end{defn}

The Johnson graph is a well studied object, and appear naturally when we talk about random walk on complexes. 
It is known that\cite{Filmus_2016} :

\begin{fac}[Spectral Gap of Johnson Graph]
    \label{specjon}
    \[\lambda(J(S,2))=\frac{S-4}{2(S-2)}\]
\end{fac}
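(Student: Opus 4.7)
The plan is to compute the spectrum of the adjacency matrix $M$ of $J(S,2)$ explicitly via a rank-$S$ factorization through the vertex-pair incidence matrix, and then read off the normalized second eigenvalue. The first step is to set up $M$ on the vertex set $\binom{S}{2}$, noting that $J(S,2)$ is $2(S-2)$-regular (for each of the two elements of a pair, one chooses any of the $S-2$ replacements). So the claim is equivalent to showing that the largest non-trivial eigenvalue of $M$ equals $S-4$.

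Next I would introduce the incidence matrix $X \in \{0,1\}^{\binom{S}{2} \times S}$ defined by $X_{\{a,b\},c} = 1$ iff $c \in \{a,b\}$, and exploit the two Gram-type products. On the pair-side, a direct count gives
\[
X X^{T} = 2 I + M,
\]
since $(XX^{T})_{\{a,b\},\{c,d\}} = |\{a,b\} \cap \{c,d\}|$ is $2$ on the diagonal, $1$ on adjacent pairs, $0$ on disjoint pairs. On the singleton-side, counting pairs containing a fixed element (resp.\ two fixed elements) gives
\[
X^{T} X = (S-2) I + J_{S},
\]
where $J_{S}$ is the all-ones $S\times S$ matrix.

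From here the proof is purely linear algebra. The spectrum of $X^{T}X$ is immediate from that of $J_{S}$: it has eigenvalue $2(S-1)$ with multiplicity $1$ (the all-ones vector) and $S-2$ with multiplicity $S-1$ (the sum-zero subspace). Since $X X^{T}$ and $X^{T} X$ share non-zero eigenvalues with equal multiplicities, $X X^{T}$ has eigenvalues $2(S-1)$, $S-2$, $0$ with multiplicities $1$, $S-1$, $\binom{S}{2}-S$ respectively. Subtracting $2I$ yields the spectrum of $M$: namely $2(S-2)$, $S-4$, $-2$ with the same multiplicities. Dividing the largest non-trivial eigenvalue $S-4$ by the degree $2(S-2)$ gives the stated value $\tfrac{S-4}{2(S-2)}$.

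The argument is essentially self-contained and the only mild obstacle is a convention point: the paper defines $\lambda(G)$ as the maximal non-trivial (normalized) eigenvalue, and for small $S$ (namely $S=5$) the eigenvalue $-2$ has larger absolute value than $S-4=1$, so one must interpret \emph{maximal} as the largest eigenvalue rather than the largest in absolute value. Under that reading (which matches the paper's usage in the up-down walk, where negative eigenvalues correspond to the complex having a bipartite-like component and are not the relevant obstruction to mixing) the computation above directly yields the fact as stated.
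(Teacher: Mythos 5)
Your computation is correct and standard, but note there is nothing in the paper to compare it against: the paper states this as a Fact and cites Filmus, giving no proof of its own. Your incidence-matrix argument is the classical derivation, and every step checks out: with $X$ the $\binom{S}{2}\times S$ incidence matrix one has $XX^{T}=2I+M$ and $X^{T}X=(S-2)I+J_{S}$, so $X^{T}X$ has spectrum $\{2(S-1)^{(1)},\,(S-2)^{(S-1)}\}$, the nonzero part transfers to $XX^{T}$, and $M=XX^{T}-2I$ has spectrum $\{2(S-2)^{(1)},\,(S-4)^{(S-1)},\,(-2)^{(\binom{S}{2}-S)}\}$; normalizing the second-largest eigenvalue by the degree $2(S-2)$ gives the stated $\tfrac{S-4}{2(S-2)}$. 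Your caveat about $-2$ is a real and worthwhile observation, but its range is slightly off: $|-2|>|S-4|$ whenever $S\le 5$ (for $S=4$ the middle eigenvalue is $0$, for $S=3$ it is $-1$ and the $-2$ eigenspace is empty), so the stated formula agrees with \emph{maximal non-trivial eigenvalue} only under the signed (not absolute-value) reading, or for $S\ge 6$. Since the paper uses this bound as the $\lambda_{2}$ input to the zig-zag theorem, where what actually matters is the spectral radius on the complement of the stationary vector, the distinction could in principle bite for small $S$; in the asymptotic regime the paper cares about ($S\to\infty$, where $\tfrac{S-4}{2(S-2)}\to\tfrac12$ dominates $\tfrac{2}{2(S-2)}\to 0$) it is harmless, which is presumably why the paper does not dwell on it.
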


\section{CTS definitions}
\subsection{Scherier Complex}
\begin{defn}[Complex Action on a vertex set]
    \label{defn:psi2}
    Let $G$  a group that acts on a set  $X$. 
    Let  $\mathcal{S}$ be a complex s.t. $\mathcal{S} \subset P( G )$. 
    We wish to describe an action of an complex on a vertex set 
    
    by defining the following product:  \begin{align*}
	\Psi &: \mathcal{S} \times X   \longrightarrow  P(X)\\
        \sigma \cdot x  &:= \Psi(\sigma,x)= \{ \sigma \cdot x \mid \sigma \in \mathcal{S}  \} 
    \end{align*}

\end{defn}

\begin{defn}[Schreier complex]
    \label{defn:St2} 
    Let $G$  a group that acts on a set  $X$. 
    Let $\mathcal{S}$ be a complex s.t. $\mathcal{S} \subset P( G )$. 
    We define the {\em Schreier complex } of $\mathcal{S}$ on $X$ 
    by 
    \[ 	Sc[\mathcal{S}, X,G] := \mathcal{S} \cdot X= \{ \sigma \cdot x \mid x\in X, \sigma \in \mathcal{S} \}  \]
    using the product $\Psi$.
\end{defn}
\begin{rem*}
    Notice that is not a direct generalization of the Schreier graph, but inspired one.
    If the action is a left product in  a group we will write $Sc[\mathcal{S},G]$.
    If the action is transitive, then the complex is transitive \cref{lem:symmetry}.
\end{rem*}
\subsection{CTS defintions}

Given $\mathcal{S}$ a complex , and $G$ a group, 
we will describe $\mathcal{C}= CTS[\mathcal{S},G]$ 

\label{sec:cts_defintions}

\begin{defn}[Types]
    $\mathcal{T}=\mathcal{S}(1)$ is the set of types ($\tau=\{\tau_{1},\tau_{2}\}$
	is an element in $\mathcal{T}$). \\
We have the ordered version of it 
\[\mathcal{T}_{o} = \text{\{the 2-ordered-subsets of $\mathcal{S}$\}}\] 

    ($t=(t_{1},t_{2})$ is an element in $\mathcal{T}_{o}$)
\end{defn}

We have a natural inverse in $\mathcal{T}_{o}$. That is
\[
	(a,b)^{-1}:=(b^{-1},a^{-1})
\]
\begin{defn}[E function]
	$E$ gets the edge by center and type 
	\[E :G \times \mathcal{T} \rightarrow \mathcal{C}(1) \]
	\[E(g,\tau)=\{\tau_{1}g,\tau_{2}g\}\]

\end{defn}

\begin{defn}[Center]\label{ndef}

	An edge $w$ is in center $c$ if \[\exists\tau\text{ s.t }E(c,\tau)=w\]
\end{defn}
\begin{defn}[Center function]
   	 $c:E^{1}\rightarrow P(G)$ is a function that returns the set of centers of an edge  
\end{defn}
\begin{defn}[$G_{walk}$]
    $G_{\text{walk}}$ is defined to be the auxiliary graph of the random walk on edges $G_{walk}(\mathcal{C})$.

\end{defn}
\begin{defn}[Graph $L$]
\label{def:graph_l_}
We define $L:=G_{walk}(\mathcal{S})$ 
\end{defn}
\subsection{Formal definition}
\label{sub:conditions} 
 
\begin{defn}[CTS - formal]
    \label{def:CTSfor}
    Given a group $G$, and a 2-complex $\mathcal{S}$,
    We define \[\mathcal{C}=Sc[\mathcal{S},G]\] where the involved action is a left group product. 
    With $\mathcal{T}=\mathcal{S}(1)$, 
    Provided that 
    \begin{enumerate}
	\setcounter{enumi}{-1}
    \item[0]\label{item0} $\{ s,s^{-1}  \} \notin \mathcal{S}\  \forall s\in \mathcal{S}(0) $ 
    \item[A]\label{item1} $\mathcal{S}$ is $d$-regular
    \item[B]\label{item2} $\mathcal{T}$ is a collection of commutative generators
	\[\{a,b\} \in \mathcal{T} \implies ab=ba  \] 
    \item[C]\label{item3} $\mathcal{T}$ is symmetric  \[\{a,b\} \in \mathcal{T} \iff \{a^{-1},b^{-1}\} \in \mathcal{T} \]  
    \item[D]\label{item4} The action by the complex $\mathcal{S}$ resembles a free action.
	    Namely, $\tau \cdot g = \tau' \cdot g'$ only in the trivial case:
	    that is if $\tau'= \tau^{-1}$ and $g'=\tau g $
	    Equivalently: for  $t \neq t' \in \mathcal{T}_o$, s.t.
	 \[t_1 t^{ -1 }_2=t'_1(t'_2)^{-1} \implies\] 
	 \[t'_2=t_1^{-1} \newline \text{ and } t'_1=t_2^{-1} \]

     \item[E]\label{item5} $\mathcal{S}^1$  is connected
	    
    \end{enumerate}

    We say that the Schreier complex $\mathcal{C}$ is also a commutative triplets structure, 
    denoted by\footnote{Using this CTS function assures that it satisfies all the coniditions} $\mathcal{C}=CTS[\mathcal{S},G]$
\end{defn}

\begin{defn}[$G_{ dual }$]

\label{sub:_g__dual_}
    We define $G_{dual}$ to be $Cay(G,\mathcal{ T })$ (aka $Cay(G,\mathcal{S}(1))$ )
    \[
	\{ (g,\tau\hat{\cdot} g)\ |\ \tau \in \mathcal{T} \} 
    \]
    \begin{rem*}
        Here we abuse notation:
    
	For $\tau=\{a,b\}$, $\tau\hat{\cdot} g$ corresponds to $abg=bag$ 
    \end{rem*}
\end{defn}
Notice that since $\mathcal{T}$ is symmetric, this graph is well-defined and undirected.

Here we prove some properties of the structure.
\section{Basic properties}
\label{sub:basic_properties}
We assume all along a commutative triplet structure $\mathcal{S}$ that is $\tilde{d}$-regular ({\em Condition \hyperref[item1]{\textbf{A}}}). 

We look at $L$ (\cref{def:graph_l_}).

$L$ is $2\tilde{d}$ regular because there are $\tilde{d}$ triangles to choose from.
Each introduces 2 distinct edges to choose form\footnote{ Because if $\tau\cup {c}\text{ and }\tau\cup{c'}$ 
both contain $\{a,b\}$, then $\tau=\{a,b\}$.}

As an illustrative example, in case of Conlon (\cref{ssub:conlon}) ,\[L=J(S,2)\] where $J$ is the Johnson graph (\cref{jongr})
Notice that $L$ is a subgraph of the $J(\mathcal{S}(1) ,2) $ graph,
because two elements must have an intersection of size 1,
in order to possibly be adjacent.

We can describe a random walk on the complex
as a random walk on types (that is random walk on $L$), and a random
walk on centers. We intend to construct a graph on the types and centers that would reflect this. 

As in the case of the Conlon's construction, the commutativity requirement
translates into an edge being in two centers. And being in two centers leads
to expansion properties of the complex as the random walk
progresses\footnote{This intuition was largely inherited from Conlon's talk at a conference by the IIAS (Israel institute for advanced studies) in April 2018. He just didn't go as far.}. 

\begin{lem}\label{lem:double}For $\mathcal{C}$ triplet structure
	if $E(g,\tau)=E(g',\tau')$ where $g\neq g'$ then
	\[g'=\tau g\] 
	\[
	\tau' = \tau^{-1}
	\](by abusing notation)
	\begin{rem*}
		We implicitly say that $\tau$ is a symmetric edge.\\
		That is for $\tau=\{t_1,t_2\}$ \[t_1t_2=t_1t_2\] and so \[\tau'=\{t_2^{-1}, t_1^{-1} \}\] 
		and 
		\[g'=t_{2}t_{1}g=t_{1}t_{2}g\]
	\end{rem*}
\end{lem}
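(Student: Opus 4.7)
The strategy is to unfold the definition of $E$ into the language of ordered tuples in $\mathcal{T}_o$ and then apply Condition \hyperref[item4]{D} directly. The lemma is essentially the contrapositive of the ``free-like'' condition, phrased in terms of the edge function $E$, so Condition \hyperref[item4]{D} should do nearly all of the work.

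First I would write out $E(g,\tau)=\{\tau_1 g,\tau_2 g\}$ and $E(g',\tau')=\{\tau'_1 g',\tau'_2 g'\}$ as subsets of $G$. Equality of these unordered pairs splits into two matchings: either $\tau_1 g=\tau'_1 g'$ and $\tau_2 g=\tau'_2 g'$, or $\tau_1 g=\tau'_2 g'$ and $\tau_2 g=\tau'_1 g'$. By relabeling the representatives of the unordered pair $\tau'$ (i.e.\ choosing an appropriate ordering in $\mathcal{T}_o$), I can assume without loss of generality a fixed pair $t=(\tau_1,\tau_2)$ and $t'=(t'_1,t'_2)\in\mathcal{T}_o$ satisfying $t_1 g=t'_1 g'$ and $t_2 g=t'_2 g'$.

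Next, I would eliminate $g'$ from these two equalities to get $(t'_1)^{-1}t_1=(t'_2)^{-1}t_2$, and rearrange this to the form $t_1 t_2^{-1}=t'_1 (t'_2)^{-1}$, which is precisely the hypothesis of Condition \hyperref[item4]{D}. Before invoking the condition I need to verify $t\neq t'$: if $t=t'$ then $t_1 g=t_1 g'$ forces $g=g'$, contradicting the assumption. Therefore Condition \hyperref[item4]{D} applies and yields $t'_1=t_2^{-1}$ and $t'_2=t_1^{-1}$, so as unordered sets $\tau'=\{t_2^{-1},t_1^{-1}\}=\tau^{-1}$, matching the remark.

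Finally, substituting back gives $g'=(t'_1)^{-1}t_1\, g=t_2 t_1\, g$, which by the commutativity Condition \hyperref[item2]{B} equals $t_1 t_2\, g$; this is exactly the abused notation $\tau g$ appearing in the statement. The main obstacle is purely bookkeeping, namely treating both matchings of the unordered pair $\tau'$ uniformly, and I sidestep this by passing to $\mathcal{T}_o$ from the outset so that Condition \hyperref[item4]{D} can be applied as a black box.
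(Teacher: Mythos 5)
Your proof is correct and follows essentially the same route as the paper's: pass to ordered types, rule out $t=t'$, rearrange to $t_1 t_2^{-1}=t'_1(t'_2)^{-1}$, and invoke Condition D. The only cosmetic differences are that you spell out the WLOG on the two matchings more explicitly and invoke Condition B for $t_1t_2=t_2t_1$ where the paper derives it from the Condition D output, neither of which changes the argument.
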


\begin{proof}
	Let $t$, $t'$ be the corresponding ordered types.

	That without loss of generalization corresponds to $t_{1}g=t'_{1}g'$ and $t_{2}g=t'_{2}g'$. Of course, if $t=t'$ we get a contradiction.
	\par Then \[t_{1}t_{2}^{-1}=(t'_{1})(t'_{2})^{-1}\]
	and by {\em Condition \hyperref[item4]{\textbf{D}}}, \[t'_2 = (t_1)^{-1}\] \[t'_1=(t_2)^{-1}\]

	we get that $t_{2}t_{1}=t_{1}t_{2}$.
	By
	\[g'=(t'_{2})^{-1}t_{2}g\]
	we get that $g'=\tau g$ and $\tau'=\tau^{-1}$.

\end{proof}
\begin{lem}\label{lem:Every-edge-is}Every edge is in exactly two centers (see definition \ref{ndef} )
 \end{lem}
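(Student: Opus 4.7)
The plan is to establish the lemma by proving two separate bounds: each edge lies in \emph{at least} two centers (existence of a second center) and \emph{at most} two centers (uniqueness beyond those two).

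For the lower bound, I would start from an arbitrary edge $w = E(g, \tau)$ where $\tau = \{t_1, t_2\}$, so $w = \{t_1 g, t_2 g\}$. Condition \textbf{C} (symmetry of $\mathcal{T}$) guarantees $\tau^{-1} := \{t_1^{-1}, t_2^{-1}\} \in \mathcal{T}$, and Condition \textbf{B} (commutativity) gives $t_1 t_2 = t_2 t_1$. Setting $g' := t_1 t_2 g = \tau \hat{\cdot} g$, a direct computation yields
\[
E(g', \tau^{-1}) = \{t_1^{-1} t_1 t_2 g,\ t_2^{-1} t_1 t_2 g\} = \{t_2 g,\ t_1 g\} = w,
\]
so $g'$ is also a center of $w$. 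It remains to verify $g' \neq g$: if $t_1 t_2 = e$, then $t_2 = t_1^{-1}$, so $\tau = \{t_1, t_1^{-1}\}$, contradicting Condition \textbf{0} which forbids such edges. Hence $g$ and $g'$ are two distinct centers.

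For the upper bound, suppose $w$ has three distinct centers $g, g', g''$ witnessed by types $\tau, \tau', \tau''$, i.e.\ $E(g, \tau) = E(g', \tau') = E(g'', \tau'') = w$. Applying \cref{lem:double} to the pair $(g, \tau)$ and $(g', \tau')$ (which differ since $g \neq g'$) gives $g' = \tau g$. Applying the same lemma to $(g, \tau)$ and $(g'', \tau'')$ gives $g'' = \tau g$ as well. Therefore $g' = g''$, contradicting the assumption that all three were distinct.

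Combining the two bounds, each edge lies in exactly two centers. The main subtlety — if any — lies in checking that $g' = \tau \hat{\cdot} g \neq g$ in the existence argument; this is where Condition \textbf{0} is essential, and I would make sure to flag it explicitly, since without it an edge of the form $\{s, s^{-1}\}$ could produce only a single center. The remaining steps are direct consequences of Conditions \textbf{B}, \textbf{C}, and the already-established \cref{lem:double}.
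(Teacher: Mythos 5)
Your proof is correct and follows essentially the same route as the paper: a second center is produced via the commuting inverse type (giving $E(g,\tau)=E(\tau\hat{\cdot}g,\tau^{-1})$), and Lemma~\ref{lem:double} caps the count at two. In fact you are slightly more careful than the paper — you explicitly verify $g'\neq g$ using Condition \textbf{0} (the paper omits this check), and you correctly attribute the key identity to Conditions \textbf{B} and \textbf{C}, whereas the paper somewhat loosely cites Condition \textbf{D} there.
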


\begin{proof}
	First $|c(\mathcal{E})|\ge1$ by the definition of $\mathcal{C}$.
	
	But for $t=(a,b)$ by {\em Condition \hyperref[item4]{\textbf{D}}}, we can see that
	\[E(g,t)=E(tg,t^{-1})\]

	Suppose $|c(\mathcal{E})|>2$,
	\[E(g,\tau)=E(g',\tau')\]
	then, $\tau'=\tau^{-1}$.
	If exists another $g'',\tau'$ s.t. $E(g'',\tau')=E(g,\tau)$ then by \lemref{double}
		\[g''=\tau g=g'\] 
		\[ \tau'' = \tau^{-1}=\tau' \]

\end{proof}

\begin{lem}\label{lem:InGwalk} In $G_{walk}$ on $CTS[\mathcal{S},G]$ where it is $\tilde{d}$-regular

\begin{enumerate}
	\item $\mathcal{E} \sim\ \mathcal{E}'$ iff exists $c$ s.t. $\mathcal{E}=E(c,\tau)$ $\mathcal{E}'=E(c,\tau')$ and
	 $\tau\sim^{L}\tau'$
	\item The walk is $4\tilde{d}$ regular

\end{enumerate}
\end{lem}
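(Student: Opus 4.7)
The plan is to establish both claims by unfolding the definitions of $G_{walk}$, $L$, and the Schreier complex $\mathcal{C}=Sc[\mathcal{S},G]$; the main subtlety will be the distinctness argument inside part 2.

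For part 1, I will verify both directions directly. In the backward direction, assume $\mathcal{E}=E(c,\tau)$ and $\mathcal{E}'=E(c,\tau')$ with $\tau\sim^{L}\tau'$. Since $L=G_{walk}(\mathcal{S})$, there is some $\sigma\in\mathcal{S}(2)$ containing both $\tau$ and $\tau'$ as $1$-edges, and then $\sigma\cdot c\in\mathcal{C}(2)$ contains both $\mathcal{E}=\tau\cdot c$ and $\mathcal{E}'=\tau'\cdot c$, witnessing $\mathcal{E}\sim \mathcal{E}'$ in $G_{walk}$. For the forward direction, every triangle of $\mathcal{C}$ has the form $\sigma\cdot g$ for some $\sigma\in\mathcal{S}(2)$ and $g\in G$, and its three $1$-edges are exactly the sets $\tau_k\cdot g$ for the three $1$-edges $\tau_k\subset\sigma$. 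Reading off the two that equal $\mathcal{E}$ and $\mathcal{E}'$ produces the required center $c=g$ and types $\tau,\tau'\subset\sigma$; being $1$-edges of a common $2$-face of $\mathcal{S}$ they are adjacent in $L$.

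For part 2, I will use part 1 to index the transitions out of $\mathcal{E}$ by pairs (center, adjacent type). By \cref{lem:Every-edge-is}, $\mathcal{E}$ has exactly two centers $c,c'$; denote the corresponding types by $\mu$ and $\mu^{-1}$. The $2\tilde{d}$-regularity of $L$, already recorded in this section as a consequence of the $\tilde{d}$-regularity of $\mathcal{S}$, yields $2\tilde{d}$ choices of adjacent type at each of these two centers, and hence $4\tilde{d}$ pairs in total.

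The main obstacle is showing that these $4\tilde{d}$ pairs produce $4\tilde{d}$ distinct neighbors. Two pairs with the same center give distinct edges by injectivity of the right action of $c$ on $G$. Suppose instead two pairs at different centers collide, say $E(c,\tau_1)=E(c',\tau_2)$ with $\tau_1\sim^{L}\mu$ and $\tau_2\sim^{L}\mu^{-1}$. Applying \cref{lem:double} both to this equality and to $\mathcal{E}=E(c,\mu)=E(c',\mu^{-1})$ forces $\tau_{1,1}\tau_{1,2}=\mu_{1}\mu_{2}$ in $G$. Since $\tau_1\sim^{L}\mu$, they share a common $2$-face of $\mathcal{S}$, and hence share a vertex in $\mathcal{S}(0)$; combined with the commutativity condition \textbf{B} and the product equality, a one-line cancellation then forces $\tau_1=\mu$ as unordered pairs, contradicting that $\tau_1$ is a proper neighbor of $\mu$ in $L$. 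Hence the contributions from the two centers are disjoint, and the walk is $4\tilde{d}$-regular.
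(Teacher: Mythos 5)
Your proof is correct and follows essentially the same route as the paper: part 1 by unwinding $\mathcal{C}(2)=\mathcal{S}(2)\cdot G$ so that adjacency in $G_{walk}(\mathcal{C})$ is exactly sharing a center and having $L$-adjacent types, and part 2 by counting $2\tilde{d}$ types at each of the two centers from \cref{lem:Every-edge-is} and ruling out collisions across centers via \cref{lem:double}. Your distinctness argument spells out the step the paper compresses into ``since the generators commutate, $\tau=\tau'$'' (namely, $\tau_1$ and $\mu$ share a vertex because they lie in a common $2$-face, so $\tau_{1,1}\tau_{1,2}=\mu_1\mu_2$ cancels to $\tau_1=\mu$), but the underlying mechanism is identical.
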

\begin{proof}\mbox{}\\*
	\begin{enumerate}
		\item Let's check when $\mathcal{E}$ is contained in a
		 certain triangle $\sigma$.

		 The triangle $\sigma$ is $\tilde{s} \cdot g$ where $\tilde{s}\in\mathcal{S}$.
		 and $g$ is distinct for any $\sigma$. 
		 So, any $2$-subset of it is guaranteed to be of the form $\{ s_{a}g,s_{b}g \}$. That means that having a common center is essential for being adjacent in $G_{walk}$. 

		 If $\mathcal{E} \sim \mathcal{E}'$, we must have that $c=c(\mathcal{E})\cap c(\mathcal{E}')$
   exists, so we define $\mathcal{E}'=\mathcal{E}_{o}(c,t')$ and
   $\mathcal{E}=\mathcal{E}_{o}(c,t)$. \\

		 There can't be 2 such centers in the intersection.
		 Suppose they both belong to centers $c,c'$, then $c'=tc$ from \lemref{double}
		 and similarly $c'=t'c$. 

   We define $\tau=\{t_1,t_2\}$ and similarly for $\tau'$. 

		 \begin{center}
			 $\mathcal{ E }=\{\tau_{1}c,\tau_{2}c\}$ $\mathcal{ E }'=\{\tau'_{1}c,\tau'_{2}c\}$ $\sigma=\{sc,s'c,s''c\}$
			
			 \end{center}

		 \par We can see that the condition $\{\tau_{1},\tau_{2}\},\{\tau'_{1},\tau'_{2}\}\subset\{s,s',s''\}$
		 is equivalent to the condition $\mathcal{ E },\mathcal{ E }'\subset\sigma$.
		\item For every center $c$ of $\mathcal{E}$, we have $\mathcal{ E }=E(c,\tau)$ for a certain $\tau$. And for every $\tau'$
		    s.t. $\tau'\sim^{L}\tau$, $\tau'$ induces a distinct edge $\mathcal{ E }'= E(c,\tau')$. 
		    $L$ is $2\tilde{d}$-regular. And we have $d(\mathcal{ E })= |c(\mathcal{ E })|2\tilde{d}$, where $|c(\mathcal{ E })|=2$ by \lemref{double}.

		    Now we assume $\mathcal{E}=E(c,\tau)$, and we want to prove that we got a distinct set of edges.
		    We haven't counted the same edge twice, since that would mean that there exists an edge $\mathcal{E}'=E(c_1,\tau')$ which has two different\footnote{The other case is easy to see} centers $c_1,c_2$ that are also centers\footnote{Each edge has at least one common center with $\mathcal{E}$} of $\mathcal{E}$ . We can assume that $c_1=c$ , and $c_2=\tau' \hat{\cdot} c$ . The other center of $\mathcal{E}$ is $\tau \hat{\cdot} c $ and is equal to $c_2$. Since the generators commutate, $\tau=\tau'$.

	\end{enumerate}
\end{proof}

\subsection{Link of a vertex}
\label{linksec}
(Non-compulsory addition)
Our analysis isn't based upon the link, in contrary to traditional analysis methods. Instead, we will derive the expansion properties from the random walk on the small complex.

We will look at the link of an element in $\mathcal{C}$. We will call this graph $\mathcal{C}_g$ or just $G_{link}(\mathcal{C})$ because they are all isomorphic.

    \begin{lem}\label{lemlink}
	The link of vertex $g$ isomorphic to graph $G_{link}$ 
	\[ V(G_{link} )=\{xy^{-1} \mid \{ x,y \}\in \mathcal{ T }\} \] .
	\[a c^{-1}  \sim b c^{-1}\text{ for every triangle }\{ a,b,c \}\text{ in }\mathcal{S}\] 
    \end{lem}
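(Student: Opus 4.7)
The plan is to exhibit an explicit isomorphism via right multiplication by $g^{-1}$. Define $\phi\colon V(\mathcal{C}_g)\to G$ by $\phi(h)=hg^{-1}$; I will show $\phi$ is a bijection onto $V(G_{link})$ and that it carries adjacencies to adjacencies in both directions.

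First I would characterise $V(\mathcal{C}_g)$. A vertex $h$ lies in the link of $g$ iff $\{g,h\}\in\mathcal{C}(1)$, which by the definition of $Sc[\mathcal{S},G]$ means $\{g,h\}=\tau\cdot g'$ for some $\tau=\{x,y\}\in\mathcal{T}$ and $g'\in G$. Matching $g\in\{xg',yg'\}$ forces $g'\in\{x^{-1}g,\,y^{-1}g\}$, hence $h\in\{xy^{-1}g,\,yx^{-1}g\}$. Taking the union over all $\tau\in\mathcal{T}$ and using that $\mathcal{T}$ is symmetric (\emph{Condition C}) gives $\phi(V(\mathcal{C}_g))=\{xy^{-1}\mid\{x,y\}\in\mathcal{T}\}=V(G_{link})$. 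Since right multiplication by $g^{-1}$ is a bijection of $G$, its restriction $\phi$ is a bijection $V(\mathcal{C}_g)\to V(G_{link})$.

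Next I would check edge preservation. For the forward direction, if $h_1\sim h_2$ in $\mathcal{C}_g$ then $\{g,h_1,h_2\}\in\mathcal{C}(2)$, so it equals $\{a,b,c\}\cdot g'$ for some triangle $\{a,b,c\}\in\mathcal{S}(2)$ and $g'\in G$; after relabelling so that $g=cg'$ we obtain $h_1=ac^{-1}g$, $h_2=bc^{-1}g$, giving $\phi(h_1)=ac^{-1}\sim bc^{-1}=\phi(h_2)$ by the definition of $G_{link}$. Conversely, given an edge $ac^{-1}\sim bc^{-1}$ of $G_{link}$ arising from a triangle $\{a,b,c\}\in\mathcal{S}(2)$, choosing $g':=c^{-1}g$ produces the triangle $\{a,b,c\}\cdot g'=\{ac^{-1}g,\,bc^{-1}g,\,g\}\in\mathcal{C}(2)$, whose two non-$g$ vertices are $\phi^{-1}(ac^{-1})$ and $\phi^{-1}(bc^{-1})$, so these are adjacent in $\mathcal{C}_g$.

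The only subtlety I anticipate is that an element of $V(G_{link})$ may be expressible as $xy^{-1}$ for several $\{x,y\}\in\mathcal{T}$, and similarly a single edge of $G_{link}$ may come from several triangles of $\mathcal{S}$. This does not obstruct the argument, however: $\phi$ is a fixed bijection of $G$, so injectivity on vertices is automatic, and multiple representatives only reflect multiple triangles producing the same link edge, which is harmless when the link is viewed as a simple graph. If one wants to match multiplicities (e.g.\ to deduce regularity of the link), the freeness condition (\emph{Condition D}) together with \cref{lem:double} controls exactly when distinct triples yield the same triangle, and can be invoked to count edges consistently.
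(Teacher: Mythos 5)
Your proposal is correct and takes essentially the same approach as the paper: both identify the link of $g$ with $G_{link}$ via right multiplication by $g^{-1}$, using the observation that the triangles of $\mathcal{C}$ through $g$ are exactly $\{a,b,c\}\cdot(c^{-1}g) = \{ac^{-1}g,\,bc^{-1}g,\,g\}$ for $\{a,b,c\}\in\mathcal{S}(2)$. Your write-up is somewhat more complete than the paper's terse argument (you spell out both directions of edge preservation and note explicitly that injectivity comes for free from $\phi$ being a group translation, whereas the paper only invokes \emph{Condition D} for the uniqueness remark), but the underlying idea is identical.
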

    \begin{proof}
	Let $h=c^{-1}g$, for every choice of $c\in V(\mathcal{S})$ and $h\in G$.

        The triangles in center $h$ are described by  $\{ ah,bh,ch \}$  for every  $\sigma=\{ a,b,c \} \in \mathcal{S} $. 
	Equivalently:
	\[ \{ a c^{-1} g, b c^{-1} g  , g  \mid g \in G \}\] 	
	
	that would be translated to the edge set of $G_{link}$:
	\[ \{ a c^{-1} , b c^{-1} | \text{ for every triangle }\{ a,b,c \}\text{ in }\mathcal{S}\} \]

	$ac^{-1}=be^{-1} $ only in the trivial case({\em Condition \hyperref[item4]{\textbf{D}}}).

    \end{proof}
	\begin{rem*}
	    Now, the link of a single element $c$ is $\mathcal{S}_c$ and it looks like:
	    
	    \[ \{\{a,b  \} \mid \{a,b,c\}\in \mathcal{S}(2) \}\] 
	    And is isomorphic to 
	    \[ \{\{ac^{-1} ,bc^{-1}  \} \mid \{a,b,c\}\in \mathcal{S}(2) \} \] 
	    So, the link of a single vertex seems like a union of all the links of the vertices in $\mathcal{S}$.
	    It is highly depending upon the structure of $\mathcal{S}$.
	\end{rem*}

\section{Replacement graph properties}
\label{repgraph}
\begin{defn}\label{def:We-define-graph}Given a commutative triplet structure $\mathcal{C}=CTS[G,\mathcal{S}]$,

	We define $G_{rep}$ that stands for a replacement product graph\footnote{The following are standard definitions. Some of them were taken from this excellent lecture about zig-zag product \cite{lect02} }.
	
	\[G_{rep}:=G_{dual}\raisebox{.5pt}{\textcircled{\raisebox{-.9pt}{r}}}L\]
	And more specifically, 	
	in $G_{rep}$, the vertex set is \[V_{rep}=G\times\mathcal{T}\]
	we define for \footnote{Notice that we need that the generators will commute here. The action is defined as in subsection  \ref{def:CTSfor} } $v\in G$, \[\phi_{v}:\mathcal{T}\rightarrow G\]
	\[
		\phi_{v}(\tau)=\tau \hat{\cdot} v = \tau_2 \tau_1 v 
	\]

	We have

	\[v=abu\text{ }\iff\text{ }(u,\{a,b\})\sim(v,\{a^{-1},b^{-1}\}) \]

	\medskip  \medskip and 
	 \[
	 E^{{\color{red}red}}=\{(v,\tau)\sim(v,\tau')\text{ if }\tau \sim \tau'\text{ on }L\}
	 \] 
	\[
		E^{{\color{blue}blue}}=\{(v,\tau)\sim(u,\tau')\ if\ u\sim v\ and\ \text{\ensuremath{\phi_{v}(\tau)=u\ ,\phi_{u}(\tau')=v\}}}
	\]

	\[
		E(G_{rep})=E^{{\color{blue}blue}}\cup E^{{\color{red}red}}
	\]

	$E^{{\color{blue}blue}}$ will be defined by the adjacency matrix $P_{{\color{blue}{\normalcolor }blue}}$ (or simply $P_B$)

	$E^{{\color{red}red}}$ will be defined by the adjacency matrix $P_{{\color{red}red}}$ (or $P_R$)

	$P_{R}|_{g}$ signifies restriction of $P_{R}$ to elements\footnote{That is $\{g,\tau \ \mid\ \tau\in \mathcal{T}\}$} $g,\_$ 
	. This would be of course exactly an instance of the graph $L$.
	So, $P_R$ is $\tilde{d}$-regular.

\end{defn}

\begin{defn}[Zig-Zag graph ]
    We define the zig-zag graph over $V(G_{rep})$ by the operator $P_{R}P_{B}P_{R}$
(see \cite{hoory2006expander}).
\label{sub:zig_zag_graph_}
    
\end{defn}

\section{Operator $T$}
\label{mainpart}

We define $T$ (will act over V($G_{rep}$)) by

\[
	T=\frac{1}{2}P_{R}+\frac{1}{2}P_{R}P_{B}
\]
(the matrices are normalized) 

Thus, $T$ induces a subgraph $G_{zig}$ that is a subgraph of $G_{rep}$.

We define an inverse function to $E:G\times\mathcal{T}\rightarrow E^{1}$ that is
\[\gamma:E^{1}\rightarrow P(G\times\mathcal{T})\]
Notice that $\gamma$ is the labeling function that gives the vertices
in $G_{walk}$ the corresponding name in $G_{rep}$. \newline

The random walk described by $T$ over $G_{rep}$ and the random walk over $G_{walk}$ are very similar,
as demonstrated in the following section:

\subsection{ Relation to $G_{walk}$ } 
\label{formal}

The definition of lift is the following (\autocite[Defintion 6.1]{hoory2006expander}): 
\begin{defn}[Lift] 
    \label{liftdef}
    Let \(G\) and \(H\) be two graphs. We say that a function \(f: V(H) \rightarrow\)
    \(V(G)\) is a covering map if for every \(v \in V(\mathcal{C}), f\) maps the neighbor set \(\Gamma_{H}(v)\) of \(v\) one-to-one and onto \(\Gamma_{G}(f(v))\). If there exists a covering function from $\mathcal{C}$ to $G$,
    we say that $H$ is a lift of $G$ or that $G$ is a quotient of $\mathcal{C}$.
\end{defn}

We are about to prove that $G_{zig}$ is a lift of $G_{walk}$. 
To do so, we show that there is a graph homomorphism between the graphs, and then we show that the neighborhood are transfered bijectively.

\begin{defn}[$G_{zig}$]
\label{sub:_g__semizig_}
    The graph $G_{zig}$ is defined as the induced graph of $T$ on the vertices of $G_{rep}$.
    This is equivalent to the defintion presented 

\end{defn}
\begin{lem}\label{lem:graphhomo}
 There is graph homomorphism between $G_{zig}$ and $G_{walk}$. That is
 the function \[E:V(G_{zig})\rightarrow V(G_{walk})\] (defined earlier) such that if\footnote{There is a directed edge from $g,\tau$ to $g',\tau'$ or the other way around}
 \[g,\tau\rightarrow^{T}g',\tau'\] then \[E(g,\tau)\sim^{G_{walk}}E(g',\tau')\]
 This mapping is 2-1. 
\end{lem}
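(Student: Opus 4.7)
The plan is to prove the homomorphism by a case analysis on the two kinds of edges that $T$ produces, and then to obtain the $2$-to-$1$ property from the fact that every edge of $\mathcal{C}$ lies in exactly two centers.

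By the definition of $T = \tfrac{1}{2}P_R + \tfrac{1}{2}P_R P_B$, any directed $T$-edge from $(g,\tau)$ lands either (i) on a vertex reached by a single red step, or (ii) on a vertex reached by a blue step followed by a red step. I would split the argument along these two cases. In case (i), we have $g' = g$ and $\tau \sim^L \tau'$; by \lemref{InGwalk} this forces $E(g,\tau) = \{\tau_1 g, \tau_2 g\}$ and $E(g,\tau') = \{\tau'_1 g, \tau'_2 g\}$ to share a common center and hence to be adjacent in $G_{walk}$. In case (ii), write the intermediate vertex as $(h, \tau'')$ where $h = \tau\hat{\cdot} g$ and $\tau'' = \tau^{-1}$ (the unique partner given by \lemref{double}, using that $\phi_v$ is defined via the commutative product $\tau\hat{\cdot} v$). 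The blue step satisfies $E(g,\tau) = E(h,\tau^{-1})$ by \lemref{double}, while the red step from $(h, \tau^{-1})$ to $(h,\tau')$ gives $E(h,\tau^{-1}) \sim^{G_{walk}} E(h,\tau') = E(g',\tau')$ by case (i). Composing the equality with the adjacency yields $E(g,\tau) \sim^{G_{walk}} E(g',\tau')$, which is the required homomorphism property.

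For the $2$-to-$1$ claim, I would simply count preimages. By \lemref{Every-edge-is}, every $\mathcal{E} \in \mathcal{C}(1)$ lies in exactly two centers; by \lemref{double}, those two descriptions of $\mathcal{E}$ are precisely $(g,\tau)$ and $(\tau\hat{\cdot} g,\tau^{-1})$, which are two distinct elements of $V(G_{rep}) = G \times \mathcal{T}$ (distinct because \textbf{Condition 0} rules out $\tau = \tau^{-1}$). Hence $|E^{-1}(\mathcal{E})| = 2$ for every $\mathcal{E}$, and $E$ is $2$-to-$1$ as a map of vertex sets.

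The only subtle point is in case (ii): one must be careful that the blue step in $G_{rep}$ is consistent with the $E$-equivalence $E(g,\tau) = E(\tau\hat{\cdot} g, \tau^{-1})$. This requires verifying that the neighbor-ordering $\phi_v(\tau) = \tau\hat{\cdot} v = \tau_2\tau_1 v$ used in \defref{We-define-graph} matches the pair identified by \lemref{double}, i.e.\ that $\phi_v(\tau) = u$ and $\phi_u(\tau^{-1}) = v$ hold simultaneously. This is immediate from commutativity (\textbf{Condition B}) and symmetry (\textbf{Condition C}) of $\mathcal{T}$, so I expect no real obstacle; the main care is bookkeeping to make sure the unique partner given by \lemref{double} is exactly the blue neighbor prescribed by $\phi$. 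The rest of the proof is essentially a direct chase through the relevant definitions.
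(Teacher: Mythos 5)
Your proof follows essentially the same route as the paper: split the $T$-edges into the red case and the blue-then-red case, verify the bookkeeping that the blue neighbor of $(g,\tau)$ is $(\tau\hat{\cdot}g,\tau^{-1})$, and reduce each case to \cref{lem:InGwalk}. The paper's own proof in fact only handles the homomorphism claim and leaves the $2$-to-$1$ assertion implicit, so your explicit fiber count via \cref{lem:Every-edge-is} and \cref{lem:double} is a welcome addition. One small inaccuracy there: \textbf{Condition 0} does not literally forbid $\tau=\tau^{-1}$ (this can still occur when both entries of $\tau$ are involutions); what it forbids is $\tau_1\tau_2=e$, which is what guarantees $\tau\hat{\cdot}g\ne g$, so the two preimages $(g,\tau)$ and $(\tau\hat{\cdot}g,\tau^{-1})$ already differ in their first coordinate. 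The conclusion is unaffected, but the justification should appeal to the centers rather than to the types.
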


\begin{proof}
    We denote $e_v$ for vertex $v$ in $G_{rep}$.

 Either\footnote{Because $P_B$ impose a condition on the center} \[e_{g',\tau'} \frac{1}{2} P_R e_{g,\tau}= \frac{1}{4\tilde{d}}\] or \[e_{g',\tau'} \frac{1}{2} P_R P_B e_{g,\tau}= \frac{1}{4\tilde{d}}\] 

In the first case, we have that $g=g'$ and $\tau \sim^L \tau' $. So we are done by lemma \ref{lem:InGwalk} for $c=g$.

In the second case, we have that $\tau g =g'$ and $ \tau^{-1} \sim^L \tau' $. So, by the same lemma, $E(g,\tau) \sim E(\tau g,\tau^{-1})$.
But as mentioned, $E(\tau g,\tau^{-1})=E(g,\tau)$. 

\end{proof}

\lftleema
\begin{proof}
 The mapping is well-defined because it is a graph homomorphism. 
 It is enough to prove that the mapping is onto(because the sets are equal in size, every vertex in both graphs has $4\tilde{d}$ neighbors). 
 Suppose $\mathcal{E} \sim E(v)$ . 
 By lemma \ref{lem:InGwalk} we can assume that $E(v)=E(g,\tau_1)$ and $\mathcal{E}=E(g,\tau_2)$, where $ \tau_1 \sim^L \tau_2$. 
 Therefore, $v$ is either $g,\tau_1$ or $\tau_1 g,\tau_1^{-1}$.
 In any case, it is easy to see that $g,\tau_2 \in \Gamma_{G_{rep}}(v)$

\end{proof} 

The last lemma assured that the $G_{zig}$ is a {\em lift }of the graph $G_{walk}$ as defined by \ref{liftdef}.

It is well known that a lift has all the eigenvalues of the quotient(for example, see \cite{bilu2006lifts}).
We get as a conclusion that $T$ has all the eigenvalues of $G_{walk}$. So, we can deduce:
\corlift 
\section{Global Properties of CS} 

\subsection{Bounding the convergence rate of CTS} 

Now we want to get a bound on the convergence rate of the walk on $G_{walk}$ in terms of the walk on $T$. And then to get a bound on the random walk on $T$. \\

\noindent We define $\pi$ to be the uniform distribution on $V(G_{walk})$.\\
We define $\pi'$ to be the uniform distribution on all vertices of $V(G_{rep})$.\\

\noindent Notice that $\pi'$ on any vertex is half the value of $\pi$. 
Let $V_+$ be the space $x \bot \pi`$ where $x\in \mathbb{R}^{V(G_{rep})}$. We prove here that $T_+$ is well-defined\footnote{$T$ restricted to $V_+$}.

\begin{fac}\label{fact: FactTPI}
The operator T satisfies the following:
\begin{enumerate}
 \item $T\pi'=\pi'$
 \item $V_+(2)\subset V_+$
\end{enumerate}
\end{fac}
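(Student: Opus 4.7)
The plan rests on two simple observations: both $P_R$ and $P_B$ are normalized adjacency operators of regular undirected graphs on $V(G_{rep})$, hence each of them fixes the uniform distribution $\pi'$ \emph{and} is self-adjoint. From these two facts, both statements of the fact will follow by a short computation.

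For (1), the red graph restricted to each center $g\in G$ is a copy of $L$, which is $2\tilde d$-regular by \cref{sub:basic_properties}, so the normalized adjacency matrix $P_R$ satisfies $P_R\pi'=\pi'$. For the blue operator, \cref{lem:double} together with {\em Condition \hyperref[item4]{\textbf{D}}} shows that each vertex $(g,\tau)$ has a unique blue partner, namely $(\tau\hat\cdot g,\tau^{-1})$; hence $P_B$ is a perfect matching, and $P_B\pi'=\pi'$ as well. Plugging into the definition of $T$ then gives $T\pi'=\tfrac{1}{2}P_R\pi'+\tfrac{1}{2}P_RP_B\pi'=\tfrac{1}{2}\pi'+\tfrac{1}{2}\pi'=\pi'$.

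For (2), I would use the adjoint of $T$. Since both $P_R$ and $P_B$ are symmetric (both graphs being undirected), $T^{*}=\tfrac{1}{2}P_R+\tfrac{1}{2}P_BP_R$. The same two identities $P_R\pi'=\pi'$ and $P_B\pi'=\pi'$ immediately give $T^{*}\pi'=\pi'$ by the analogous calculation. Therefore, for any $x\in V_+$ (i.e.\ $x\perp\pi'$), $\langle Tx,\pi'\rangle=\langle x,T^{*}\pi'\rangle=\langle x,\pi'\rangle=0$, which proves $T(V_+)\subset V_+$.

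The only real subtlety is that $T$ itself is \emph{not} symmetric, because $P_RP_B$ need not equal $P_BP_R$; so one cannot directly invoke invariance of the orthogonal complement of an eigenvector under a symmetric operator. The correct move is to pass through $T^{*}$ and show that it too fixes $\pi'$. The freelike-action condition ({\em Condition \hyperref[item4]{\textbf{D}}}) is exactly what makes the blue operator a genuine perfect matching, which in turn is what makes $P_B\pi'=\pi'$ and the symmetric factorization of $T^{*}$ work out cleanly.
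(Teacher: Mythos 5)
Your proposal is correct and takes essentially the same approach as the paper: both parts rest on the two identities $P_R\pi'=\pi'$ and $P_B\pi'=\pi'$ together with the symmetry of $P_R$ and $P_B$, and the paper's computation for (2) is exactly the chain you package as $T^*\pi'=\pi'$ via moving the symmetric operators onto the $\pi'$ side. Your remark that $T$ itself is not symmetric, so one must argue via the adjoint (or equivalently move operators across the inner product as the paper does), is a useful clarification of why the computation has the shape it does.
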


\begin{proof}
\mbox{}\\*
 \begin{enumerate}
 \medskip \medskip 
	\item We have \[
	T\pi'=\frac{1}{2}P_{R}\pi'+\frac{1}{2}P_{R}P_{B}\pi'=P_{R} \pi'=\pi'
 \] \\ 
 \par Since $P_{B}\pi'=\pi'$. That is because $\forall x,\tau$ \\
\[
(P_{B}\pi')( e_{ x,\tau } )=\pi' ( e_{ \tau x,\tau^{-1} } )=\pi'( e_{ x,\tau } )
\]

\item Suppose $y\bot \pi'$. Then 

\begin{align*}
 \langle \pi',Ty \rangle &= \langle \pi', P_R y + P_RP_By \rangle \\
	 &= \langle \pi', P_R y \rangle + \langle \pi', P_R P_By \rangle = \langle P_R\pi',y \rangle + \langle P_R \pi', P_By \rangle = \\
	 &= \langle \pi', y \rangle + \langle \pi', P_B y \rangle = \langle P_B \pi', y \rangle = 0 
\end{align*}

 \end{enumerate}\end{proof}

\begin{lem}\label{lem:In-case-there}
    $\lambda(G_{zig}) \le\sqrt{\frac{1}{2}+\frac{1}{2}\lambda(G_{dual}\raisebox{.5pt}{\textcircled{\raisebox{-.9pt}{z}}}L )}$
\end{lem}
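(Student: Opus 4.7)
The plan is to bound the operator norm of $T$ on the invariant subspace $V_+$ via the identity $\|T|_{V_+}\|^2 = \|TT^*|_{V_+}\|$, and then expand $TT^*$ into a sum of a stochastic operator (bounded by $1$) and the adjacency operator of the zig-zag product. Since any non-trivial eigenvalue of $T$ lives in $V_+$ by \ref{fact: FactTPI} (which gives $T\pi' = \pi'$; the analogous check shows $T^*\pi' = \pi'$, so $V_+$ is also $T^*$-invariant), bounding this operator norm suffices to bound $\lambda(G_{zig})$.

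The first step is to observe that $P_B$ is an involution. By \cref{lem:double} combined with the free-action Condition~\textbf{D}, every vertex $(g,\tau) \in V(G_{rep})$ has exactly one blue neighbor, namely $(\tau \hat{\cdot} g, \tau^{-1})$; applying this swap twice returns to $(g,\tau)$, so $P_B^2 = I$. The matrices $P_R$ and $P_B$ are symmetric as adjacency operators of undirected graphs, hence $T^* = \tfrac12 P_R + \tfrac12 P_B P_R$. Multiplying out, one obtains
$$TT^* \;=\; \tfrac{1}{4}\bigl(P_R + P_R P_B\bigr)\bigl(P_R + P_B P_R\bigr) \;=\; \tfrac{1}{4}\bigl(P_R^2 + 2\,P_R P_B P_R + P_R P_B^2 P_R\bigr) \;=\; \tfrac12 P_R^2 \;+\; \tfrac12 P_R P_B P_R.$$

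Next, I would estimate each term on $V_+$. The first satisfies $\|P_R^2|_{V_+}\| \le \|P_R\|^2 \le 1$, since $P_R$ is a normalized random-walk operator. The second, $P_R P_B P_R$, is by \ref{sub:zig_zag_graph_} exactly the (normalized) adjacency operator of the zig-zag product $G_{dual}\raisebox{.5pt}{\textcircled{\raisebox{-.9pt}{z}}}L$, so its norm on $V_+$ equals $\lambda(G_{dual}\raisebox{.5pt}{\textcircled{\raisebox{-.9pt}{z}}}L)$. Combining via the triangle inequality,
$$\|T|_{V_+}\|^2 \;=\; \|TT^*|_{V_+}\| \;\le\; \tfrac12 \;+\; \tfrac12\,\lambda\bigl(G_{dual}\raisebox{.5pt}{\textcircled{\raisebox{-.9pt}{z}}}L\bigr),$$
and taking square roots gives the lemma.

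The mild subtlety to watch for is that $T$ is not self-adjoint (since $P_R$ and $P_B$ do not commute), so one cannot invoke a straightforward second-eigenvalue argument directly on $T$; the singular-value route through $TT^*$ is exactly what makes the involution identity $P_B^2 = I$ pay off, symmetrizing the walk and cleanly isolating the zig-zag operator as the only non-trivial contribution to the bound.
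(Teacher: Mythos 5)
Your proof is correct and follows essentially the same strategy as the paper: expand a quadratic expression in $T$ and isolate $P_R P_B P_R$ as the zig-zag operator. The paper works with $T^2 = \tfrac14(P_R^2 + P_R^2 P_B + P_R P_B P_R + P_R P_B P_R P_B)$ and bounds the four terms pairwise, implicitly using that $P_B$ is an isometry to absorb the extra $P_B$ factors; you instead form $TT^*$ and use the involution $P_B^2 = I$ explicitly, which collapses the expression to $\tfrac12 P_R^2 + \tfrac12 P_R P_B P_R$ with no cross terms. Your $TT^*$ route is a bit cleaner on the rigor front: $\|T|_{V_+}\|^2 = \|TT^*|_{V_+}\|$ is an exact operator-norm identity (once $V_+$ is checked to be invariant under both $T$ and $T^*$, which you do), whereas with $T^2$ the paper is really estimating the spectral radius $\rho(T|_{V_+})$ via $\rho(T)^2 = \rho(T^2) \le \|T^2\|$, since $T$ is not self-adjoint and $\|T\|^2 \le \|T^2\|$ need not hold. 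Both reach the same bound, and both rest on the same pair of facts: $P_R$ is a normalized stochastic operator and $P_B$ is a permutation; your version just makes the role of $P_B^2 = I$ explicit rather than burying it in a norm inequality.
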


\begin{proof}

	\[
		T=\frac{1}{2}P_{R}+\frac{1}{2}P_{R}P_{B}
	\]

	So, we have:

	\[
		T^{2}=\frac{1}{4}[P_{R}^{2}+P_{R}^{2}P_{B}+P_{R}P_{B}P_{R}+P_{R}P_{B}P_{R}P_{B}]
	\]
We will bound $||T^2||_+$ 

	\[
		||P_{R}^{2}+P_{R}^{2}P_{B}||_{+}\le2
	\]

	\[
		||P_{R}P_{B}P_{R}+P_{R}P_{B}P_{R}P_{B}||_{+}\le||P_{R}P_{B}P_{R}||_{+}+||P_{R}P_{B}P_{R}P_{B}||_{+}\le2||P_{R}P_{B}P_{R}||_{+}
	\]
	(Notice that $P_Bx \bot \pi'$ if $x\bot \pi'$)

	\par So we have that

	\[
	    ||T^{2}||_{+}\le\frac{1}{2}+\frac{1}{2} \lambda(G_{dual}\raisebox{.5pt}{\textcircled{\raisebox{-.9pt}{z}}L})  
	\]	
	That assures that we hare a rapid convergence  as the eigenvalues of $T_+^2$ are bounded away from 1.

\end{proof}

We have our main theorem: 

\mainthm

\begin{proof}
 Immediate from corollary \ref{corliftA} and lemma \ref{lem:In-case-there} 
\end{proof}

\corRW
\begin{proof}
We now wish to bound $\lambda(G_{dual}\raisebox{.5pt}{\textcircled{\raisebox{-.9pt}{z}}}L )$ to get a bound on $G_{walk}$. 
We can rely on known theorems about the zig-zag product.
We use here the following theorem by Reingoldn, Vadhan and Wigderson (originally \cite[Theorem 4.3]{reingold2002entropy}).
The theorem reads:
	\begin{thm} 
		\begin{math}
		\text {If } G_{1} \text { is an }\left(N_{1}, D_{1}, \lambda_{1}\right)\text{-graph and } G_{2} \text { is a }\left(D_{1}, D_{2}, \lambda_{2}\right)\text {-graph then }\end{math}
		
		$G_{1}\raisebox{.5pt}{\textcircled{\raisebox{-.9pt}{z}}}G_{2}$\begin{math}\text { is a }\left(N_{1} \cdot D_{1}, D_{2}^{2}, f\left(\lambda_{1}, \lambda_{2}\right)\right)-\text{graph, where } f\left(\lambda_{1}, \lambda_{2}\right) \leq\lambda_{1}+\lambda_{2} \text{ and } f\left(\lambda_{1}, \lambda_{2}\right)<1 \text{ when } \lambda_{1}, \lambda_{2}<1\end{math}.
	\end{thm}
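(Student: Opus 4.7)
The plan is to identify the normalized adjacency operator of the zig-zag product as a product of three simpler operators on the space $\mathbb{R}^{V(G_1)\times V(G_2)}$ and then to analyze it with respect to a natural orthogonal decomposition. Let $A$ and $B$ be the normalized adjacency matrices of $G_1$ and $G_2$, and let $\mathrm{Rot}_{G_1}\colon V(G_1)\times[D_1]\to V(G_1)\times[D_1]$ be the rotation map that sends $(v,i)$ to $(w,j)$ where $w$ is the $i$-th neighbor of $v$ and $v$ is the $j$-th neighbor of $w$. Identifying $[D_1]$ with $V(G_2)$, define the operators $\tilde B = I_{N_1}\otimes B$ (internal $G_2$-step in each cloud $\{v\}\times V(G_2)$) and $\tilde A$ = the permutation matrix of $\mathrm{Rot}_{G_1}$. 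The zig-zag random walk step is: take a $G_2$-step inside the cloud, follow the unique $G_1$-edge determined by the resulting label, then take another $G_2$-step, so the normalized adjacency matrix is $M=\tilde B\tilde A\tilde B$. Vertex count $N_1D_1$ and degree $D_2^2$ are then immediate (each $\tilde B$ is a $D_2$-regular walk and $\tilde A$ is a single deterministic step).

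For the spectral bound, I would decompose the Hilbert space as $\mathbb{R}^{V(G_1)\times V(G_2)} = C \oplus C^{\perp}$, where $C=\{y\otimes \tfrac{1}{\sqrt{D_1}}\mathbf{1} : y\in\mathbb{R}^{N_1}\}$ is the subspace of functions that are constant on each cloud. Write $\Pi_C$ for the orthogonal projection onto $C$. The three structural facts I would establish are:
\begin{enumerate}
    \item $\tilde B$ fixes $C$ pointwise (since $B\mathbf{1}=\mathbf{1}$), preserves $C^{\perp}$, and $\|\tilde B|_{C^{\perp}}\|\le\lambda_2$.
    \item $\tilde A$ is an isometry (permutation), and crucially the compressed operator $\Pi_C\tilde A\Pi_C$, when identified with an operator on $\mathbb{R}^{N_1}$ via $y\mapsto y\otimes \tfrac{1}{\sqrt{D_1}}\mathbf{1}$, equals $A$. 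This is because averaging the indicator of $\mathrm{Rot}_{G_1}(v,\cdot)$ over the second coordinate gives exactly the uniform distribution on the $G_1$-neighbors of $v$.
    \item Consequently, on any $y\perp \mathbf{1}_{N_1}$ (i.e. on mean-zero vectors in $C$), $\|\Pi_C\tilde A\Pi_C y\|\le\lambda_1\|y\|$.
\end{enumerate}

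Given a unit vector $x\perp \mathbf{1}$, write $x=x^{\|}+x^{\perp}$ with $a=\|x^{\|}\|$, $b=\|x^{\perp}\|$, $a^2+b^2=1$ (and note $x^{\|}$ corresponds to a mean-zero $y$). Applying $\tilde B$ gives $\tilde B x=x^{\|}+z$ with $z=\tilde B x^{\perp}\in C^{\perp}$ and $\|z\|\le\lambda_2 b$. Then $u:=\tilde A\tilde B x$ splits as $u^{\|}+u^{\perp}$, where $u^{\|}=\Pi_C\tilde A x^{\|}+\Pi_C\tilde A z$, so by (3) and the fact that $\|\Pi_C\|\le 1$ and $\|\tilde A\|=1$,
\[
\|u^{\|}\|\le\lambda_1 a+\lambda_2 b,\qquad \|u^{\perp}\|^2\le\|\tilde B x\|^2-\|u^{\|}\|^2\le a^2+\lambda_2^2 b^2-\|u^{\|}\|^2.
\]
Applying the outer $\tilde B$ preserves $u^{\|}$ and contracts $u^{\perp}$ by $\lambda_2$, giving
\[
\|Mx\|^2=\|u^{\|}\|^2+\|\tilde B u^{\perp}\|^2\le(\lambda_1 a+\lambda_2 b)^2+\lambda_2^2\bigl(a^2+\lambda_2^2 b^2-(\lambda_1 a+\lambda_2 b)^2\bigr).
\]
Maximizing the right-hand side over $(a,b)$ on the unit circle yields an explicit $f(\lambda_1,\lambda_2)^2$ (the standard closed form is $f(\lambda_1,\lambda_2)=\tfrac12(1-\lambda_2^2)\lambda_1+\tfrac12\sqrt{(1-\lambda_2^2)^2\lambda_1^2+4\lambda_2^2}$).

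Finally, I would verify the two stated properties of $f$: the triangle inequality $f(\lambda_1,\lambda_2)\le\lambda_1+\lambda_2$ follows directly from $(\lambda_1 a+\lambda_2 b)^2\le(\lambda_1+\lambda_2)^2$ together with the crude bound $\lambda_2^2(a^2+\lambda_2^2 b^2)\le\lambda_2^2$, and elementary manipulations; and $f<1$ whenever $\lambda_1,\lambda_2<1$ follows by checking the closed form on the boundary cases ($\lambda_1=1$ or $\lambda_2=1$ are the only ways to reach value $1$). The main obstacle is cleanly establishing fact (2) — that $\Pi_C\tilde A\Pi_C$ really is $A$ — since it requires being careful about the identification of the $G_2$-coordinate with the $G_1$-edge labeling and verifying the averaging works out; everything after that is a bookkeeping exercise in the two-dimensional $(a,b)$ optimization.
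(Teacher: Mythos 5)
Note first that the paper does not prove this statement at all: it is quoted directly from Reingold--Vadhan--Wigderson \cite{reingold2002entropy}, so the only benchmark is their original argument. Your setup reproduces that framework correctly ($M=\tilde B\tilde A\tilde B$, the cloud decomposition $C\oplus C^{\perp}$, and facts (1)--(3); incidentally, fact (2) is the routine part, not the main obstacle). The genuine gap is the final claim that maximizing your right-hand side over the unit circle ``yields $f(\lambda_1,\lambda_2)^2$''. It does not. Your bound is $\|Mx\|^2\le(1-\lambda_2^2)(\lambda_1 a+\lambda_2 b)^2+\lambda_2^2(a^2+\lambda_2^2 b^2)$, a quadratic form in $(a,b)$ whose maximum on the unit circle equals the top eigenvalue of the matrix with entries $(1-\lambda_2^2)\lambda_1^2+\lambda_2^2$, $(1-\lambda_2^2)\lambda_1\lambda_2$, $\lambda_2^2$; this is strictly larger than $f(\lambda_1,\lambda_2)^2$ whenever $0<\lambda_1,\lambda_2<1$. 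Concretely, for $\lambda_1=0.99$ and $\lambda_2^2=\tfrac12$, your right-hand side reaches about $1.17$ (e.g.\ at $a^2\approx0.77$), i.e.\ a bound of about $1.08$ on $\lambda(G_1\raisebox{.5pt}{\textcircled{\raisebox{-.9pt}{z}}}G_2)$, whereas $f(0.99,1/\sqrt2)\approx0.997$. So your derivation produces neither the stated closed form nor the property the theorem (and this paper's use of it) actually hinges on, namely $f<1$ whenever $\lambda_1,\lambda_2<1$: in part of that range your bound is vacuous. Verifying the properties of $f$ from its closed form afterwards does not repair this, because that closed form was never derived.

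The loss is structural, not a slip of algebra: you bound $\|u^{\|}\|\le\lambda_1 a+\lambda_2 b$ and, independently, $\|u\|^2\le a^2+\lambda_2^2 b^2$, and then let both worst cases hold simultaneously. In effect the portion of $\tilde A\tilde B x^{\perp}$ that lands in $C$ is allowed to carry its full norm $\lambda_2 b$, survive the last $\tilde B$-step untouched, \emph{and} still be charged only at rate $\lambda_2^2$ in the perpendicular budget --- a double counting that the isometry of $\tilde A$ forbids. The sharp bound comes from a joint analysis of how $\tilde A$ couples the two components (as in RVW's proof of their Theorem 4.3); the extremal situation is a two-dimensional one, where $\tilde A$ acts as a rotation mixing a mean-zero direction of $C$ with a direction of $C^{\perp}$, and there the top eigenvalue of $\tilde B\tilde A\tilde B$ is the largest root of $z^2-(1-\lambda_2^2)\lambda_1 z-\lambda_2^2=0$, which is exactly the stated $f(\lambda_1,\lambda_2)$. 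To close the gap you must either carry out that sharper analysis or otherwise obtain a bound that is provably below $1$ for all $\lambda_1,\lambda_2<1$; the present chain of inequalities does neither.
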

$f$ is the function:
\begin{equation}\tag{ \ding{168} }  \label{equation:zig-zag} 
f\left(\lambda_{1}, \lambda_{2}\right)=\frac{1}{2}\left(1-\lambda_{2}^{2}\right) \lambda_{1}+\frac{1}{2} \sqrt{\left(1-\lambda_{2}^{2}\right)^{2} \lambda_{1}^{2}+4 \lambda_{2}^{2}}
\end{equation}	 
	
We call it the "zig-zag function".

Some of its properties are studied there. It is better (lower) when $\lambda_1$ and $\lambda_2$ are worse. And less than 1 if $\lambda_1$ and $\lambda_2$ are less than 1. This assures the resulted graph is an expander when the original graphs are. 

    \end{proof}    
\begin{restatable}{cor}{comutativegood}\label{corollary:comgood}
Let $G$ be a group.
Let $\mathcal{S}\subset{G \choose 3}$ be a set of triangles. \\
Suppose $\mathcal{C}=CTS[ G,\mathcal{S} ]$ is a $k$-edge regular (where $k$ is bounded by $D$) commutative triplet structure (satisfies conditions \hyperref[itemZ]{\textbf{0}} - \hyperref[itemE]{\textbf{E}} ).
Suppose further that $Cay(G,{\mathcal{S}\choose{2}}) $ is $\epsilon$-expander.
Then, $2D$-random walk on $\mathcal{C}$ converges rapidly with some rate $\alpha(D,\epsilon)<1$.
\end{restatable}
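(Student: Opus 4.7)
The plan is to derive the corollary as a direct packaging of the CTS Theorem (\cref{main_thm}) with the known spectral bound for the zig-zag product (\cref{zg}); essentially no new estimates are needed beyond what has already been established in this part.

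First, since $\mathcal{C} = CTS[G, \mathcal{S}]$ is assumed to be a valid commutative triplet structure (conditions \hyperref[item0]{\textbf{0}}--\hyperref[item5]{\textbf{E}}), I can invoke \cref{main_thm} to obtain
\[
\lambda(G_{walk}(\mathcal{C})) \le \sqrt{\tfrac{1}{2} + \tfrac{1}{2}\,\lambda\bigl(G_{dual} \raisebox{.5pt}{\textcircled{\raisebox{-.9pt}{z}}} L\bigr)},
\]
where $L = G_{walk}(\mathcal{S})$ and, by the definition of $G_{dual}$ in \cref{sub:_g__dual_}, one has $G_{dual} = Cay(G, \mathcal{T}) = Cay(G, \binom{\mathcal{S}}{2})$. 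This reduces the task to controlling a single number $\lambda(G_{dual} \raisebox{.5pt}{\textcircled{\raisebox{-.9pt}{z}}} L)$.

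Next, I would estimate the two inputs to the zig-zag product separately. The hypothesis that $Cay(G, \binom{\mathcal{S}}{2})$ is $\epsilon$-expanding directly gives $\lambda(G_{dual}) \le 1 - \epsilon$. For $\lambda(L)$, recall that $L$ is the $2\tilde{d}$-regular graph on $\mathcal{S}(1)$ with $\tilde{d} \le D$, and it is connected thanks to condition \hyperref[item5]{\textbf{E}} together with the triangle structure of $\mathcal{S}$. Since the small complex $\mathcal{S}$ is bounded in complexity by $D$, this yields a uniform bound $\lambda(L) \le \beta(D) < 1$ depending only on $D$.

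Finally, I apply \cref{zg}: the zig-zag function $f$ satisfies $f(\lambda_1, \lambda_2) < 1$ whenever $\lambda_1, \lambda_2 < 1$, so
\[
\lambda\bigl(G_{dual} \raisebox{.5pt}{\textcircled{\raisebox{-.9pt}{z}}} L\bigr) \le f(1-\epsilon,\, \beta(D)) < 1,
\]
and plugging this back into the first display gives the desired rate
\[
\alpha(D, \epsilon) := \sqrt{\tfrac{1}{2} + \tfrac{1}{2}\, f(1-\epsilon,\, \beta(D))} < 1.
\]
The only point that deserves care is the uniform control of $\lambda(L)$ by $D$ alone; once that is in hand, the rest is a mechanical substitution into the two ingredient theorems.
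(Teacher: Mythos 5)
Your proposal is correct and follows essentially the same route as the paper's own proof: invoke the CTS theorem (equivalently, the packaged \cref{corollary:rw}), observe $\lambda(G_{dual})\le 1-\epsilon$ from the hypothesis, use boundedness of $D$ and connectivity to get a uniform $\lambda(L)\le\beta(D)<1$, and substitute into the zig-zag function. One small point in your favor: you attribute the connectivity of $L$ to Condition \textbf{E} (connected $1$-skeleton) together with the triangle structure, which is the correct source; the paper's proof cites Condition \textbf{D} at that step, which appears to be a slip.
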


\begin{proof}
    Since $\mathcal{C}=CTS[ G,\mathcal{S} ]$ is of bounded-degree, the number of vertices of $L$ is bounded. By {\em Condition \hyperref[item4]{\textbf{D}}}, $L$ is connected. 
So, each graph $L$ has convergence rate less than 1. 
There are only finitely many possibilities, so there is a number $\beta'(D)<1$ that is the maximal convergence rate for all the graphs $L$. \\

\par We can use corollary \ref{corollary:rw} and get:
\[\lambda(G_{walk})^2 \le \frac{1}{2}+\frac{1}{2}f(1-\epsilon,\beta')<1\]

\end{proof}

\pagebreak
\part{ HDZ } 
\label{HDZpart}

\section{Main Part}
\label{mainpa}

\subsection{HDZ definition}
\label{sub:hdz_procedure}
We describe here the procedure of making the HDZ complex, formally defining the complex $H$
\[H=HDZ[\mathcal{A} ,C, \mathcal{G}, {F_1,F_2, \ldots , F_\chi} ]\] 
given initial complex $\mathcal{A}$ with coloring $C$ and group $\mathcal{G}$ as described above.

\medskip \medskip We have two variants of it $HDZ^{-}$, $HDZ^{+}$ which consists of two, or respectively 3 steps.
\begin{enumerate}

     \item If the original complex doesn't satisfy property $\Inv$ (\cref{propInv}),
 we need to convert it to a complex that does 
 \[\mathcal{A'}=\HPOWER[\mathcal{A}]\]
 while preserving the expansion. See \cref{ssub:problem_of_inv_}.
 ( otherwise $\mathcal{A'}=\mathcal{A}$)

 \item We convert  $\mathcal{A'}$ to complex $\mathcal{S}$ over the group $\mathcal{G}$ that is isomorphic to $\mathcal{A}$. We call it \[\mathcal{S}=CONV[\mathcal{A'},C, {F_1,F_2, \ldots , F_\chi}]\]
     We do so in \cref{Conv} 
\item We simply plug it into the mechanism of commutative triplet structure.
    \[ H= Cts[ \mathcal{G}, \mathcal{S}]  \] 
    We prove it satisfies the required conditions in \cref{CTS}. 
\end{enumerate}

\noindent $HDZ^{+}$ is with the additional step 1.

\noindent $HDZ^{-}$ is without it.

\medskip \medskip To conclude\footnote{By arguments I mean, the rest of the arguments that $HDZ^{-}$ requires } : 
\begin{align*}
        HDZ^{-}[\mathcal{A} ,C, \mathcal{G}, {F_1,F_2, \ldots , F_\chi}] :=& Cts[\mathcal{G},CONV[\mathcal{A},C, {F_1,F_2, \ldots , F_\chi}]] \\
	HDZ^{+}[\mathcal{A} , \text{arguments} ] :=& HDZ^{-}[HPOWER[\mathcal{A}], \text{arguments} ] 
\end{align*}

We take $HDZ$ to be $HDZ^{-} $ if $\mathcal{A}$ satisfies property $\Inv$ and $HDZ^{+}$ otherwise. 
We will specify the needed variant in each case we handle. It has implications on the size and degree of the complex and how similar it is to $\mathcal{A}$.  

The $HDZ^{+}$ variant always works and yields the same convergence rate.

\subsection{The CONV mechanism}
\label{Conv}

Let $\mathcal{A}$ be a complex that is a regular and strongly $\chi$-colorable complex.     (Strong coloring or rainbow coloring means that every triangle has vertices of 3 different colors)

Let $C: V(\mathcal{A}) \to [\chi]$ be a coloring of $\mathcal{A}$. We denote $V^c$ the vertices of color $c$ where $V^c_i$ is $i$-th element of $V^c$. $K_c$ is the number of vertices of color $c$, and we assume it is even. 

   \medskip \medskip Let $\mathcal{G}  = G_1 \times G_2 \ldots \times G_\chi $ 
    \medskip \medskip Suppose we have $F_1,F_2 \ldots , F_\chi$ symmetric subsets of the corresponding groups , where $|F^c|=|V^c|$ for every color $c \in [\chi]$.

We use a useful notation here:

$(c,n)$  is the element of $\mathcal{G}$ that is $F^c_n$.
Given $g=(g_1,g_2,\ldots g_\chi)$, \[(c,n) \cdot (g_1,g_2,\ldots g_\chi) = (g_1 , g_2 ,\ldots ,{F^c_n \cdot g_c}  , \ldots , g_\chi )\]

We use here the notion introduced earlier. 
We define: 

\[\mathcal{F} =  \bigcup_{i \in  I} F_i \]

\begin{defn}[CONV mechanism]
\label{sub:conv_mechanism}

\[\mathcal{S}=CONV[\mathcal{A},C,{F_1,F_2 ,\ldots , F_\chi }] \] is a complex on the vertex set $\mathcal{F}$ 
    generated by the mapping 
    \begin{align*}
	\phi: \mathcal{A}(0) &\to \mathcal{F} \\
	\phi(V^c_i) &= (F_c)_i
    .\end{align*}

\end{defn}

where we assume that $F_c$ is ordered such that it satisfies property $\widetilde{\Inv}$ :
    \begin{defn}\label{tildeinv}
        Property $\widetilde{\Inv}$ is satisfied if for every $i\in [K_c]$:
	\[(F^c_i)^{-1}=\left( F^c_{i+\frac{K_c}{2}} \right)  \]
	And there is no element in $F_c$ of order 2.
	(The indices are taken as modulo $K_c$)

    \end{defn}

    We use this map to generate complex $\mathcal{S}$ on vertices $\mathcal{F}$, where  $\{  a,b,c  \} \rightarrow \{  \phi(a),\phi(b),\phi(c)   \}  $. That is clearly isomorphic to $\mathcal{C}$.

We have this definition: 
    \begin{defn}\label{propInv}
	We say that $\mathcal{A}$ satisfies property $Inv$ if $K_c$ is symmetric for every $c \in [\chi]$, and 
      for every  $c,d\in [\chi]\text{ and } i,j \in [K_c]$ 
    \[\{ V^c_i,V^d_j \} \in \mathcal{A}(1) \iff \{ V^c_{i+\frac{K_c}{2}},V^d_{j+\frac{K_c}{2}} \} \in \mathcal{A}(1)\]
    where the indices are taken modulu $K_c$. 
\end{defn}
        
\begin{defn}
    
    \label{esymmetric}
    An complex $\mathcal{S}$ is edge symmetric if  $\mathcal{S} \subset P(G)$ for some group $G$ and   
\[ \{ a,b \} \in \mathcal{S}(1) \iff \{ a^{-1},b^{-1} \} \in \mathcal{S}(1)  \] 
for the inverse in the group.
\end{defn}

It is easy to see that if complex $\mathcal{A}$ satisfies property $\Inv$, then $\mathcal{S}$ has symmetric edge set.

\subsection{CTS Requirements }
\label{CTS}

\begin{claim}
    \label{isCTS}
    Let $\mathcal{S}$ a $2$-complex s.t. \[\mathcal{S}(2) \subset \cup_{m\in ( \text{3-tuples in }[\chi] )}  \{F_{m_1},\ldots ,F_{m_3}\}\]
    that is edge-symmetric (\cref{esymmetric}) and 1-connected (\cref{basiccomplex}).
    
    We let 
    \[\mathcal{C}=St(G,\mathcal{S})\]
    Then, $\mathcal{C}$ is a standard commutative triplet structure (CTS for short). 
\end{claim}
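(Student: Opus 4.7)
The plan is to verify each of the six CTS axioms from \cref{def:CTSfor} for $\mathcal{C}=St(G,\mathcal{S})$ one by one. Three of them are immediate from the stated hypotheses: Condition A (regularity of $\mathcal{S}$, inherited from the underlying $d$-regular complex), Condition C (symmetry of $\mathcal{T}=\mathcal{S}(1)$, which is exactly the edge-symmetry hypothesis), and Condition E (connectedness of $\mathcal{S}^1$, which is the 1-connectedness hypothesis). The three substantive conditions are 0, B, and D; in all three, the crucial structural observation is that the hypothesis $\mathcal{S}(2)\subset\bigcup_m\{F_{m_1},\ldots,F_{m_3}\}$ forces every triangle of $\mathcal{S}$ to be rainbow with respect to the partition $\{F_1,\ldots,F_\chi\}$, and hence every edge $\{a,b\}\in\mathcal{S}(1)$ has $a\in F_i$ and $b\in F_j$ for some $i\neq j$.

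Given this, Condition 0 is immediate: each $F_i$ is a symmetric subset of $G_i$, so $s$ and $s^{-1}$ always lie in the same block, whereas edges of $\mathcal{S}$ straddle two distinct blocks. Condition B follows because for $\tau=\{a,b\}\in\mathcal{S}(1)$ with $a\in F_i\subset G_i$ and $b\in F_j\subset G_j$, the embeddings of $a$ and $b$ into $\mathcal{G}=G_1\times\cdots\times G_\chi$ (identity in every other coordinate) are supported on disjoint factors, and elements of a direct product supported on disjoint factors commute.

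The main step is Condition D. Take $t=(t_1,t_2),t'=(t_1',t_2')\in\mathcal{T}_o$ with $t_\alpha\in F_{c_\alpha}$ and $t_\alpha'\in F_{c_\alpha'}$, where $c_1\neq c_2$ and $c_1'\neq c_2'$, and assume $t_1 t_2^{-1}=t_1'(t_2')^{-1}$ in $\mathcal{G}$. Viewing both sides inside $\mathcal{G}$, each is an element whose set of nontrivial coordinates is exactly a 2-element set (no $F_i$ contains the identity). Matching these supports yields two cases. Either $(c_1,c_2)=(c_1',c_2')$, and then comparing the two nontrivial coordinates forces $t_1=t_1'$ and $t_2=t_2'$, i.e., the trivial case $t=t'$; or $(c_1,c_2)=(c_2',c_1')$, in which case reading off coordinate $c_1$ gives $t_1=(t_2')^{-1}$ while coordinate $c_2$ gives $t_1'=t_2^{-1}$, which is exactly the conclusion Condition D demands.

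No serious obstacle arises. The only slightly delicate point is the support-counting argument underlying Condition D, which depends on every generator being nontrivial in its ambient factor together with the bichromatic nature of each edge; both are consequences of the hypotheses. Assembling these six verifications delivers that $\mathcal{C}$ is a CTS in the sense of \cref{def:CTSfor}.
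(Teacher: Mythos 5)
Your proof is correct and follows the same axiom-by-axiom verification route as the paper's, with the same central observation that rainbow-coloring of triangles forces every edge to straddle two distinct direct-product factors. Your handling of Condition D is actually slightly cleaner than the paper's: the paper asserts $\{t'_1,t'_2\}=\{t_1,t_2\}$ and then waves at the conclusion, whereas you explicitly separate the two support-matching cases and read off the required identities coordinate-by-coordinate; do note, though, that the nontriviality assumption you invoke (no $F_i$ contains the identity, so supports have size exactly two) is used implicitly by the paper as well but does not appear among the stated hypotheses of the claim.
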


\begin{proof}
    \medskip \medskip We verify that the required conditions are satisfied.
       We can see that any $\tau \in \mathcal{T}$ is given by $\{ ( n,i ),( m,j ) \}$ for a certain $i\neq j$ where $1\le i,j\le K$ and $n,m$ correspond to $F_m, F_n$. We call $mn$ the template of the edge.
    
   {\em Condition \hyperref[item1]{A}}  is satisfied by the fact that $\mathcal{S}$ is regular.
        {\em Condition \hyperref[item2]{B}} is satisfied because $\mathcal{S}$ is commutative.
        {\em Condition \hyperref[item3]{C}} is satisfied because it  is symmetric.
        There is no element $\{ f,f^{ -1 } \}$ because $i$,$j$ are different ({\em Condition \hyperref[item0]{0}}).   
	{\em Condition \hyperref[item4]{D}} is satisfied because of the following observation. 
       Let's assume 
       \[t_1 t^{ -1 }_2=t'_1(t'_2)^{-1}\]
       Assuming $t$ is of template $ij$ and $t'$ of $i'j'$. 
       Since $t$  effect $g=(g_1,\ldots , g_\chi )$ in the $i,j$ positions, the templates are the same. 
       Then since $F^i$ and $F^j$ commute, the order is not important.
    
       So \[\{ t'_1,t'_2 \} =  \{  t_1 , t_2 \}\] 
    
       And it is easy to see that 
    
       \[t'_2=t_1^{-1} \newline \text{ and } t'_1=t_2^{-1} \]
       as required. 
       {\em Condition \hyperref[item5]{E}} because it is 1-connected.
\end{proof}

\subsection{Basic properties}

\begin{lem}\label{lem:properties}
    Assuming $\mathcal{A}$ is $d$-regular, and   \[\mathcal{C}=HDZ^{-}[\mathcal{A} ,C, \mathcal{G}, {F_1,F_2, \ldots , F_\chi}]\] is a valid CTS. Then $\mathcal{C}$ has the following properties: 
\begin{enumerate}

    \item $|\mathcal{G}|$ vertices
    \item  The degree of each vertex is $3|\mathcal{A}(2)|$
    \item  $2d$-regular
\end{enumerate}
\end{lem}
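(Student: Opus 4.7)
I would verify each property directly from the Schreier construction $\mathcal{C} = \mathrm{Sc}[\mathcal{S}, \mathcal{G}]$. For the vertex count, the $0$-faces of $\mathcal{C}$ are $\{s \cdot g : s \in \mathcal{S}(0),\ g \in \mathcal{G}\}$, and since $\mathcal{G}$ acts on itself transitively by left multiplication this set equals all of $\mathcal{G}$, so $|V(\mathcal{C})| = |\mathcal{G}|$.

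For the triangle-degree at a fixed vertex $g$, the natural map
\[
(\sigma, i) \in \mathcal{S}(2) \times \{1,2,3\} \longmapsto \sigma \cdot (s_i^{-1} g),
\]
writing $\sigma = \{s_1, s_2, s_3\}$, produces a triangle of $\mathcal{C}(2)$ containing $g$, yielding at most $3|\mathcal{A}(2)|$ such triangles. To get equality I would establish the auxiliary lemma that every $T \in \mathcal{C}(2)$ admits a unique representation $T = \sigma \cdot c$ with $\sigma \in \mathcal{S}(2)$ and $c \in \mathcal{G}$. The proof uses strong $\chi$-coloring (so $\mathcal{S}(2)$ is rainbow) together with the HDZ placement $\mathcal{F} = \bigcup_i F_i$ in disjoint coordinates of $\prod_i G_i$: given $\sigma \cdot g = \sigma' \cdot g'$, set $h := g(g')^{-1}$ and note that $fh \in \mathcal{F}$ must hold for three $f$'s with three distinct color coordinates; a direct coordinatewise inspection forces $h = e$, so $\sigma = \sigma'$ and $g = g'$. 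Granted this, the $(\sigma, i) \leftrightarrow T$ correspondence is a bijection onto triangles at $g$, giving exactly $3|\mathcal{A}(2)|$.

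For $2d$-regularity, fix an edge $\mathcal{E} \in \mathcal{C}(1)$. By \cref{lem:Every-edge-is}, $\mathcal{E}$ sits in exactly two centers $c_1, c_2$, with corresponding types $\tau_1, \tau_2 \in \mathcal{S}(1)$ satisfying $\mathcal{E} = E(c_k, \tau_k)$. The triangles in $\mathcal{C}(2)$ containing $\mathcal{E}$ with center $c_k$ correspond bijectively to those $\sigma \in \mathcal{S}(2)$ extending $\tau_k$, of which there are $d$ by $d$-regularity of $\mathcal{S}$ (inherited from $\mathcal{A}$). The unique-representation lemma guarantees that the two families of $d$ triangles are disjoint (any triangle in both would contradict uniqueness of its center), so $\mathcal{E}$ lies in exactly $2d$ triangles and $\mathcal{C}$ is $2d$-regular.

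The main obstacle is the unique-representation lemma for triangles. The CTS axioms---specifically the free-like Condition~D---yield such uniqueness at the level of edges via \cref{lem:double}, but the triangle statement is genuinely extra input and rests on the rainbow structure of $\mathcal{S}(2)$ together with the disjoint-coordinate embedding of $\mathcal{F}$ inside $\prod G_i$. Without it, both the vertex-degree count and the regularity count would have to be stated up to multiplicities coming from alternative representations of the same face.
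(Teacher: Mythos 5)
Your argument follows the same skeleton as the paper's proof (vertex set equals $\mathcal{G}$; count triangles at a vertex via the three shifted centers $\tilde{s}_i^{-1}x$; count triangles at an edge via the two centers from \cref{lem:Every-edge-is} and $d$-regularity of $\mathcal{S}$), and your insistence on the auxiliary unique-representation lemma for triangles is exactly the rigor the paper's terse ``these are all the options'' glosses over. Your proposal is correct.

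One point worth correcting, though: you claim the unique-representation lemma for triangles ``is genuinely extra input'' beyond the CTS axioms and must rest on the rainbow coloring and the disjoint-coordinate embedding of $\mathcal{F}$ in $\prod_i G_i$. That is not so --- it already follows from Condition~D (the free-like property) via \cref{lem:double}, with no HDZ-specific structure needed. Indeed, suppose $\sigma\cdot g = \sigma'\cdot g'$ with $\sigma,\sigma'\in\mathcal{S}(2)$ and $g\ne g'$; write $\sigma=\{a,b,c\}$. Each $2$-subset $\{ag,bg\}$, $\{ag,cg\}$ of $\sigma\cdot g$ is also a $2$-subset of $\sigma'\cdot g'$, hence of the form $E(g',\tau')$ with $\tau'\in\mathcal{S}(1)$; applying \cref{lem:double} to the edge $\{a,b\}$ yields $g' = abg$, and applying it to $\{a,c\}$ yields $g' = acg$, forcing $b=c$, a contradiction. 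So $g=g'$ and consequently $\sigma=\sigma'$. Your coordinate-by-coordinate argument is also valid (and self-contained within HDZ), but the Condition-D argument is what justifies the paper stating the lemma for an arbitrary ``valid CTS'' rather than just for $HDZ^{-}$, and it is closer to the machinery the paper actually builds.
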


\begin{proof}
    $\mathcal{S}$ is isomorphic to $\mathcal{A}$. 
    
    $V(\mathcal{C})=\mathcal{G}$ by definition.

    Let's look at vertex $x\in \mathcal{G}$. Every selection of $\tilde{ s } \in \mathcal{S}(2)$ yields 3 different options for a center of edge. 
    Namely, \[c= {\tilde{s}_i}^{-1} x\text{ for }i\in \{ 1,2,3 \}\] 
    This induces $\tilde{s} c$ as the triangle that contains it (by abuse of notion). 

    These are all the options.  So, there are $3|\mathcal{A}(2)|$ options in total.

    The regularity of the random walk on $\mathcal{C}$ is  4 times the regularity of $\mathcal{S}$. As the random walk includes two new edges for every triangle, the regularity of $\mathcal{C}$ is $2d$. This is also clear, since an edge is in two centers, and each induces $d$ distinct triplets that contains the edge. 
\end{proof}

\subsection{The convergence rate}
\label{defmu}
Now, $G_{dual}$ is defined to be the graph (\cref{sub:_g__dual_})
\[G_{dual}=Cay(\mathcal{G},\mathcal{T})\]
Where \[\mathcal{T}=\mathcal{S}(1) \]
This is an abuse of notation.

Another way to look at it, the mapping $\phi$ induces a function $\mu$ that relate the vertices of $\mathcal{A}(1)$ to $\mathcal{F}^2$ defined by

\[\mu( \{a,b\})= \phi_{c(a)} (a)  \phi_{c(b)}(b) \]

And we have \[G_{dual}=Cay(G,\mu(\mathcal{A}(1)))\]

We have proved that this is a commutative triplet structure. So we can finally use the main theorem , and get:

\[\lambda(G_{walk}(\mathcal{C}))\le \sqrt{\frac{1}{2}+\frac{1}{2}\lambda(G_{dual}\raisebox{.5pt}{\textcircled{\raisebox{-.9pt}{z}}}G_{walk}\left( \mathcal{A} \right) ) }\] 

We will combine all that we have concluded so far: 
\begin{prop}\label{propwithinv}
	Let $\mathcal{A}$ be a complex that is a regular and strongly $\chi$-colorable complex. 
	(Strong coloring or rainbow coloring means that every triangle has vertices of 3 different colors)

Let $C: V(\mathcal{A}) \to [\chi]$ be a coloring of $\mathcal{A}$. We denote $V^c$ the vertices of color $c$ where $V^c_i$ is $i$-th element of $V^c$. 
$K_c$ is the number of vertices of color $c$ (and it is even).

   \medskip \medskip Let $\mathcal{G}  = G_1 \times G_2 \ldots \times G_\chi $ where $G_i$ are groups. 
    \medskip \medskip Suppose we have $F_1,F_2 \ldots, F_\chi$  subsets of the corresponding groups.

\medskip \medskip     We require that:
\begin{itemize}
    \item $|F_c|=K_c$ 
    
    \item $\mathcal{A}$ satisfies property $\Inv$(definition \ref{propInv}).
\end{itemize} 

The complex that is defined by  \[\mathcal{C}=HDZ^{-}[\mathcal{A} ,C, \mathcal{G}, {F_1,F_2, \ldots, F_\chi} ]\] satisfies:
\[\lambda(G_{ walk }(\mathcal{C})) \le \sqrt{\frac{1}{2} + \frac{1}{2} f\left(\lambda(G_{dual}) ,  \lambda(G_{ walk }(\mathcal{A})) \right)}  \]
where (for $\mu$ defined earlier)
\[
    G_{dual}=Cay(\mathcal{G},\mu(\mathcal{A}(1)))
\]

and $f$ is the "zig-zag function" \ref{equation:zig-zag}

\end{prop}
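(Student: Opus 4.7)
The plan is to recognize that this proposition is essentially a packaging statement: it combines the CTS main theorem (\cref{main_thm}), the fact that $\mathcal{S}$ is isomorphic to $\mathcal{A}$, and the zig-zag expansion bound of Reingold--Vadhan--Wigderson (\cref{zg}). So the work reduces to checking that the $HDZ^{-}$ output actually lands in the CTS framework, then substituting the appropriate spectral quantities.

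First I would verify that $\mathcal{C} = HDZ^{-}[\mathcal{A}, C, \mathcal{G}, F_1, \ldots, F_\chi]$ is a genuine CTS in the sense of \cref{def:CTSfor}. By \cref{isCTS} it suffices to check that the intermediate complex $\mathcal{S} = CONV[\mathcal{A}, C, F_1, \ldots, F_\chi]$ is $1$-connected, regular, and edge-symmetric in the sense of \cref{esymmetric}, and that its edges live in a union of the form $\{F_{m_1}, F_{m_2}\}$. Regularity and $1$-connectedness are transported from $\mathcal{A}$ by the isomorphism $\phi$. The multipartite template condition comes from strong $\chi$-coloring of $\mathcal{A}$: each triangle has three distinctly colored vertices, so its image in $\mathcal{S}$ hits three distinct $F_c$'s. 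The one nontrivial point is edge-symmetry, and this is exactly where hypothesis $\Inv$ is used: by \defref{propInv}, the ordering of $V^c$ satisfies $\{V^c_i, V^d_j\} \in \mathcal{A}(1)$ iff $\{V^c_{i+K_c/2}, V^d_{j+K_d/2}\} \in \mathcal{A}(1)$, and combining this with the ordering on $F_c$ from $\widetilde{\Inv}$ (so that $\phi$ sends an index shift by $K_c/2$ to group-inverse) gives $\{\phi(u), \phi(v)\} \in \mathcal{S}(1) \iff \{\phi(u)^{-1}, \phi(v)^{-1}\} \in \mathcal{S}(1)$.

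Next, \cref{isCTS} delivers the CTS hypotheses A--E, so \cref{main_thm} applies and gives
\[
\lambda(G_{walk}(\mathcal{C})) \le \sqrt{\tfrac{1}{2} + \tfrac{1}{2}\, \lambda\bigl(G_{dual} \raisebox{.5pt}{\textcircled{\raisebox{-.9pt}{z}}} L\bigr)},
\]
where $L = G_{walk}(\mathcal{S})$. Since $\phi: \mathcal{A} \to \mathcal{S}$ is a simplicial isomorphism, the induced map on edges is a graph isomorphism between $G_{walk}(\mathcal{A})$ and $L$, so $\lambda(L) = \lambda(G_{walk}(\mathcal{A}))$. This is the point where $G_{walk}(\mathcal{A})$ enters the bound.

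Finally I would apply the Reingold--Vadhan--Wigderson bound (\cref{zg}) to the zig-zag product $G_{dual} \raisebox{.5pt}{\textcircled{\raisebox{-.9pt}{z}}} L$. One needs to match the regularity parameters to invoke the theorem: $G_{dual}$ is $|\mathcal{T}|$-regular and $L$ has $|\mathcal{T}|$ vertices, so the zig-zag product is well-defined. The theorem then gives
\[
\lambda\bigl(G_{dual} \raisebox{.5pt}{\textcircled{\raisebox{-.9pt}{z}}} L\bigr) \le f\bigl(\lambda(G_{dual}),\, \lambda(L)\bigr) = f\bigl(\lambda(G_{dual}),\, \lambda(G_{walk}(\mathcal{A}))\bigr),
\]
with $f$ the zig-zag function from \eqref{equation:zig-zag}. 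Chaining this into the previous display yields the stated inequality. The only real obstacle is the bookkeeping around $\Inv$ and $\widetilde{\Inv}$ in step one; the rest is plug-and-play with the cited black boxes.
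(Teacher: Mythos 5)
Your proposal is correct and follows the same route the paper implicitly takes: verify via \cref{isCTS} that $\mathcal{S}=CONV[\mathcal{A},C,F_1,\ldots,F_\chi]$ meets the CTS requirements (with $\Inv$ together with $\widetilde{\Inv}$ delivering edge-symmetry), apply \cref{main_thm}, use that $\phi$ is a simplicial isomorphism so $\lambda(L)=\lambda(G_{walk}(\mathcal{A}))$, and finish with the Reingold--Vadhan--Wigderson bound exactly as in \cref{corollary:rw}. The paper states the proposition as a summary of what was ``concluded so far'' without a separate proof block, and your write-up supplies precisely those steps.
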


\subsubsection{Two problems}
\label{ssub:two_problems}

To successfully use what we have so far, we need two conditions. The first is that $Inv$ property should be satisfied.
We will handle this in the section \ref{ssub:problem_of_inv_}. 

The second hurdle is that we can't always assure that $G_{dual}$ is a good enough expander. However, in certain cases, we can.

We reduce the expression for the expansion of $G_{dual}$ to something that is occasionally more manageable\footnote{At least in one case}  in section \ref{ssub:analyzing_the_expansion_of_g__cay}.
And we get a concrete result in \cref{later}.

\section{Full 1-skeleton case}
\label{applicfull}

We calculate $\lambda(G_{dual})$ in the case the complex has full 1-skeleton.
Since the complex is strongly $\chi$-colorable, full means that all the possible $2$-edges are present, namely, that the 1-skeleton of it is a $\chi$-multipartite graph. 

In this case, We have no problem with $Inv$ or with calculating $G_{dual}$ .
We know property $Inv$ is satisfied as it contains the required edges.

\begin{lem}\label{lem:full1skellam}

    Let $\mathcal{A}$ be a $d$-regular, strongly $\chi$-colorable 2-complex. 
	(Strong coloring or rainbow coloring means that every triangle has vertices of 3 different colors)

Let $C: V(\mathcal{A}) \to [\chi]$ be a coloring of $\mathcal{A}$. We denote $V^c$ the vertices of color $c$.
    
Let $Cay(G_1,F_1) \ldots Cay(G_\chi,F_\chi)$ be a collection of cayley graphs such that 
$|F_c| = |V^c|$ for every $c \in [\chi]$. With $N={\chi \choose 2}$ , we have
\[ \lambda(G_{dual}) = \frac{ N-(\chi-1) + (\chi -1)\nu }{N}\]

\end{lem}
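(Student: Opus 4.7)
The strategy is to exploit the complete multipartite structure of the $1$-skeleton of $\mathcal{A}$: since every polychromatic pair of vertices is an edge, the type set $\mathcal{T}=\mathcal{S}(1)$ contains, for every unordered template $\{c,d\}\subseteq[\chi]$ (with $c\neq d$) and every $(i,j)\in[K_c]\times[K_d]$, the edge $\{(c,i),(d,j)\}$. Under the action on $\mathcal{G}=\prod_k G_k$, such an edge is left multiplication by $F^c_i$ on coordinate $c$ and by $F^d_j$ on coordinate $d$, and the two factors commute since they act on different tensor factors. This lets me regroup the edges of $G_{dual}$ by template and get a clean tensor-product decomposition.

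First I would fix a template $\{c,d\}$ and sum the permutation matrices of its $K_c K_d$ edges. This sum factors as $K_c K_d \cdot B_{cd}$, where $B_{cd}$ is the tensor operator that acts as the normalized adjacency matrix $\bar A_c$ of $Cay(G_c,F_c)$ on coordinate $c$, as $\bar A_d$ on coordinate $d$, and as identity elsewhere. Dividing by the total edge count $\sum_{\{c,d\}}K_c K_d$ yields
\[
A_{G_{dual}} \;=\; \frac{1}{\sum_{\{c,d\}}K_c K_d}\,\sum_{\{c,d\}}K_c K_d\,B_{cd}.
\]

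Because the $B_{cd}$'s pairwise commute (their non-identity factors live on disjoint tensor coordinates), they admit a common eigenbasis of pure tensors $v=v_1\otimes\cdots\otimes v_\chi$ with $\bar A_k v_k = \lambda_k v_k$. On such a vector, $B_{cd}v=\lambda_c\lambda_d\,v$, so the corresponding eigenvalue of $A_{G_{dual}}$ is $\sum_{\{c,d\}}K_c K_d \lambda_c\lambda_d \big/ \sum_{\{c,d\}}K_c K_d$. The trivial eigenvector has $\lambda_k=1$ for all $k$; every other common eigenvector has at least one coordinate $k$ where $v_k$ is non-trivial, in which case $|\lambda_k|\le\nu<1$.

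The main remaining step, and the only delicate one, is to maximize the ratio over non-trivial tensor eigenvectors. In the setting of the lemma (where the weights reduce $A_{G_{dual}}$ to $\frac{1}{N}\sum_{\{c,d\}}\lambda_c\lambda_d = e_2(\lambda)/N$ with $e_2$ the second elementary symmetric polynomial), I would show the maximum is achieved when exactly one coordinate $c_0$ is non-trivial with $\lambda_{c_0}=\nu$. The cleanest route is to write $e_2=(e_1^2-p_2)/2$ and apply Cauchy--Schwarz on the non-trivial block to compare the case $s\ge 2$ of non-trivial coordinates with the case $s=1$: each extra non-trivial coordinate replaces unit terms in $e_2$ by terms bounded by $\nu<1$, strictly reducing the sum. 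Substituting the extremizer yields
\[
\frac{1}{N}\Bigl(\binom{\chi-1}{2}+(\chi-1)\nu\Bigr) \;=\; \frac{N-(\chi-1)+(\chi-1)\nu}{N},
\]
and this value is actually attained by taking $v_{c_0}$ to be a top non-trivial eigenvector of the Cayley factor achieving $\nu$ and $v_k$ trivial for $k\ne c_0$, establishing equality.
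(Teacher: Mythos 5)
Your proposal follows the paper's argument closely: both decompose $G_{dual}$ as an average of tensor-product Cayley graphs $A_{ij}$ over color templates, use the common pure-tensor eigenbasis so that the eigenvalue is $\frac{1}{N}\sum_{\{i,j\}}\mu_i\mu_j$, and maximize over non-trivial choices to land on one coordinate at $\nu$ and the rest at $1$. You are in fact slightly more careful than the paper in writing the degree-weighted average $\bigl(\sum_{\{c,d\}} K_c K_d\bigr)^{-1}\sum_{\{c,d\}} K_c K_d\, B_{cd}$; the reduction of these weights to the uniform $1/N$ implicitly requires all color classes (hence all $|F_c|$) to have the same size, a point that both the paper's proof and your parenthetical leave unstated.
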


\begin{proof}

We know that \[\mu(A(1))=\mathcal{F}^2\]
\[\mathcal{F}^2 := \cup_{i\neq j\in [\chi]} F_iF_j\]

So 
\[G_{dual}= Cay(\mathcal{G}, \mathcal{F}^2)\]

Therefore if we set \[A_i = Cay(G_i,S_i)\]
Then we can define:
\[A_{ij}:=I \times \ldots  \times \overarrow[i]{A_i} \times I \times \ldots \times \overarrow[j]{A_j} \times \ldots \times I\]

that is the adjacency matrix of $Cay(\mathcal{G},S_{ ij })$ in our case, where \[S_{ ij } = S_i \times S_j\]
And $G_{dual}$ is just a union of all possible $Cay(\mathcal{G},S_{ ij })$ . 

\medskip \medskip We let $N= { \chi \choose 2 }$ 
\[G_{dual}= \frac{1}{N} \sum_{\{ i,j \} \in { \chi \choose 2 }} A_{ij}\] 
We let $y=x_1\otimes x_2 \ldots \otimes x_\chi $ be a collection of eigenvectors, where $x_i$ is an eigenvector of $A_i$ with $\mu_i$ as its eigenvalue. 

 \[G_{dual}y =\frac{1}{N} \sum_{\{ i,j \} \in { \chi \choose 2 }} A_{ij} y = \frac{1}{N} \sum_{\{ i,j \} \in { \chi \choose 2 }} \mu_i \mu_j y \] 
That is why the eigenvalue is \[\frac{1}{N}\left( \sum_{\{ i,j \} \in { \chi \choose 2 }} \mu_i \mu_j \right)\] 
Now, to maximize it we choose all the eigenvalue to be 1, except $\mu_1$ which would be $\max_i\left(\lambda_2( A_i )\right) $ .
The number of times $\mu_1$ appears in the sum is $\chi-1$.

So, we have
\[\frac{ N-(\chi-1) + (\chi -1)\nu }{N}\]

as the maximal eigenvalue.

\end{proof}\medskip \medskip We now have 
\fullskel

\begin{proof}
    We use \cref{propwithinv} with $\mathcal{A}$.

    We know property $Inv$ is satisfied. And it is already $d$-regular.
    The properties are from \cref{lem:properties} .

    We have $\lambda(G_{dual})$ from \cref{lem:full1skellam}
\end{proof}

An interesting application of this is the case the complex is complete(complete 3-partite graph). This case is called the 3-product case.

\section{Property $\Inv$ and HPOWER }
\label{ssub:problem_of_inv_}

For the theorem to be useful, we wish to have complexs that satisfies property $Inv$.
We can claim so immediately in several cases. 
\begin{claim}
    Property $Inv$ is satisfied if $A$ is a commutative triplet structure.  
\end{claim}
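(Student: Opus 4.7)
The plan is to exhibit an involution $\iota : V(\mathcal{A}) \to V(\mathcal{A})$ that (i) preserves the coloring $C$, (ii) has no fixed point within any color class, and (iii) is a graph automorphism of the $1$-skeleton $\mathcal{A}^1$. From such an $\iota$ I obtain property $\Inv$ directly by numbering each color class $V^c$ so that $V^c_i$ and $V^c_{i + K_c/2}$ form an $\iota$-orbit: the biconditional in the definition of $\Inv$ then reads off from the fact that $\iota$ sends edges to edges.

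Using the CTS assumption, write $\mathcal{A} = CTS[\mathcal{S}', G']$, so that $V(\mathcal{A}) = G'$ and every edge is of the form $E(g,\tau) = \{\tau_1 g, \tau_2 g\}$ with $g \in G'$ and $\tau \in \mathcal{T} = \mathcal{S}'(1)$. I first observe that for every fixed $h \in G'$, right-translation $r_h : v \mapsto v h$ is a graph automorphism of $\mathcal{A}^1$, since it sends $E(g,\tau)$ to $E(gh,\tau)$. The task thus reduces to finding an order-$2$ element $h \in G'$ whose action $r_h$ preserves the color classes of $\mathcal{A}$.

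To produce $h$, I would use the flexibility in choosing the coloring. Since $\mathcal{A}$ is strongly $\chi$-colorable and its edge set is invariant under $r_{G'}$, I may replace $C$ by a coloring pulled back from a quotient $G'/H$ for a subgroup $H \le G'$ that still separates the color classes of the small complex $\mathcal{S}'$. Any involution $h \in H$ then acts trivially on cosets of $H$ (hence preserves colors), has no fixed point on $G'$, and preserves edges by the observation above; setting $\iota = r_h$ finishes the construction.

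The main obstacle is guaranteeing the existence of an order-$2$ element $h$ inside an admissible $H \le G'$. For every CTS of interest later in the paper this is immediate — Conlon's construction uses $G' = \mathbb{F}_2^n$, where every non-identity element is an involution, and the Chapman-Linial-Peled construction uses abelian groups of even order — but in full generality I would instead substitute an involution built from condition~C of the CTS axioms: the symmetric pairing $\tau \leftrightarrow \tau^{-1}$ of types induces an involution of the edge set $E(g,\tau) \leftrightarrow E(\tau g, \tau^{-1})$, which extends, after an appropriate choice of base vertex in each color class, to the required vertex involution.
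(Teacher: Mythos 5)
Your proof diverges substantially from the paper's and contains a genuine gap as well as a concrete error.

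The paper's proof of this claim is a one-liner: it invokes Condition~C of the CTS axioms, which says $\{a,b\}\in\mathcal{T} \iff \{a^{-1},b^{-1}\}\in\mathcal{T}$. The intended witness for $\Inv$ is the \emph{inversion} map $s\mapsto s^{-1}$ on the group-valued vertex labels (this is also exactly what the auxiliary property $\widetilde{\Inv}$ in \cref{tildeinv} encodes: number each color class so that $V^c_{i+K_c/2}=(V^c_i)^{-1}$). Condition~C then literally reads off the biconditional in \cref{propInv}. Your proof instead reaches for right-translation $r_h$ by a group element of order two, which is a genuinely different mechanism and requires machinery (an admissible subgroup $H$, a new quotient coloring, existence of an involution in $H$) that the paper never sets up.

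The gap is the one you name yourself: in general there is no guarantee that an order-two $h\in G'$ exists inside a subgroup compatible with the strong coloring, and replacing the given coloring $C$ by one pulled back from $G'/H$ is not permissible anyway — property $\Inv$ must hold \emph{for the coloring actually fed into the HDZ pipeline}, not for a coloring of your choosing. Your fallback also fails outright: you propose the edge map $E(g,\tau)\leftrightarrow E(\tau g,\tau^{-1})$, but by \cref{lem:double} (and the remark preceding it) these two expressions denote the \emph{same} edge, so the proposed map is the identity on $\mathcal{A}(1)$ and cannot induce a nontrivial fixed-point-free vertex involution. Thus neither the primary route nor the fallback establishes the claim; what is missing is the simple observation that Condition~C makes inversion of the generator labels preserve edges, after which the $\widetilde{\Inv}$ ordering supplies the required indexing directly.
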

\begin{proof}
    Obvious, since it is one of the requirements ({\em Condition \hyperref[item3]{C}}).
\end{proof}

What is interesting about this claim is that, we can composite several HDZ constructions together, or start with a known CTS, such as Conlon's construction, and continue with HDZ, yielding larger and larger constructions. 

\begin{claim}
    Property $Inv$ is satisfied if the 1-skeleton of $A$ isomorphic to some $Cay(G',F')$ where $G'$ is an abliean group and $F'$ is symmetric.
\end{claim}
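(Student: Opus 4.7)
The plan is to produce, given the coloring of $\mathcal{A}$, an ordering of each color class $V^c$ such that the shift $i \mapsto i + K_c/2$ corresponds to an automorphism of the $1$-skeleton. The natural candidate is the inversion involution on $G'$, exploiting that the symmetry $F' = -F'$ makes inversion edge-preserving.

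First I would verify that inversion $\iota : g \mapsto -g$ is a graph automorphism of $\operatorname{Cay}(G', F')$. Writing $G'$ additively, $\{a,b\}$ is an edge iff $b - a \in F'$; applying $\iota$, $\{-a,-b\}$ is an edge iff $(-b) - (-a) = -(b-a)$ lies in $F'$, which holds iff $b - a \in F'$ by symmetry of $F'$. Hence $\iota$ preserves the edge set of the $1$-skeleton.

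Next I would use $\iota$ to index each color class. Assuming (or arranging, by refining the coloring as discussed below) that each $V^c$ is $\iota$-invariant and that $\iota$ has no fixed points on $V^c$ (which holds whenever $V^c$ contains no element of order $2$ in $G'$), I would list $V^c$ as $V^c_1, \ldots, V^c_{K_c/2}, -V^c_1, \ldots, -V^c_{K_c/2}$, so that $V^c_{i + K_c/2} = -V^c_i$. With this indexing, property $\Inv$ reads $\{a,b\} \in \mathcal{A}(1) \iff \{-a,-b\} \in \mathcal{A}(1)$, which is immediate from the previous paragraph applied to the whole $1$-skeleton.

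The main obstacle is handling possible $2$-torsion in $G'$: elements with $g = -g$ are $\iota$-fixed, which would break the pairing within color classes that contain them. A clean workaround is to replace $\iota$ by $\pi : g \mapsto t - g$ for a suitable $2$-torsion element $t \in G'$; this is still an involution, still a graph automorphism (composition of $\iota$ with the translation by $t$), and fixed-point free whenever $2t \neq 0$. A secondary subtlety is ensuring the given coloring is $\iota$-compatible; when the coloring arises from cosets of an inversion-stable subgroup this is automatic, and in general one can merge a class with its $\iota$-image to obtain a still-valid strong coloring, at the cost of at most halving $\chi$.
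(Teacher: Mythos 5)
Your core idea --- that the inversion involution on $G'$ is an edge-preserving automorphism of $\operatorname{Cay}(G',F')$ because $F'$ is symmetric --- is exactly the idea in the paper's proof, which consists only of the one-line computation that $(fg)^{-1}=f^{-1}g^{-1}$ with $f^{-1}\in F'$. Where you go further is in spelling out what the paper leaves implicit: that to actually index $V^c$ so that $V^c_{i+K_c/2}=(V^c_i)^{-1}$, one needs inversion to preserve each color class \emph{and} to act freely on it. The paper says nothing about either point, so your extra scrutiny is a genuine improvement, and the observation that $2$-torsion obstructs the pairing is a real gap in the paper's argument (and arguably in the claim itself, absent further hypotheses).

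One slip in your proposed patch, though: you write that $\pi:g\mapsto t-g$ for a $2$-torsion $t$ is ``fixed-point free whenever $2t\neq 0$,'' but if $t$ is $2$-torsion then $2t=0$ by definition, so the stated condition can never hold. The correct criterion is that $\pi$ is fixed-point free iff $t\notin 2G'$ (i.e., $t$ is not of the form $2g$), and one would still need to check that such a $t$ exists and that the resulting $\pi$ is compatible with the coloring. So the workaround is on the right track conceptually but not quite nailed down; the secondary point about merging a color class with its $\iota$-image is also only sketched and would change the coloring parameters the rest of the construction depends on. Still, as a diagnosis of what the paper's one-line proof is missing, your proposal is accurate and more careful than the original.
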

\begin{proof}
    We prove that if $(g,g')\in A(1)$ then $(g^{-1},g'^{-1}) \in A(1)$.
    In this case, \[g'=fg\] for $f\in F'$. And surely  \[(fg)^{-1}=f^{-1}g^{-1}=g'^{ -1 }\]
    Since $f$ symmetric, $f^{-1} \in F'$
\end{proof}

\subsection{ HPOWER } 
\label{hpwrsec}
\label{General complex}
Even if we can't prove our original complex $\mathcal{A}$ satisfies $\Inv$, we can just generate a new complex from $\mathcal{A}$ that does. What we do is we define a power of complex that is similar to normal product by a full complex, and we prove it preserves the expansion.
We only need to do product by complete complex of 2 vertices, but it could be easily extended to complete complex of general $k$-vertices. And possibly could be a useful tool in other situations.
\begin{claim}\label{claim:HPOWER}
    For any $d$-regular complex $\mathcal{A}$ s.t. $G_{walk}(\mathcal{A})$ is $\epsilon$-expander, we define  \[\mathcal{A'}=HPOWER[\mathcal{A}]\] that has the following properties:
    \begin{itemize}
	\item  $|V(\mathcal{A}')|=2|V(\mathcal{A})|$.
	\item  $|\mathcal{A}'(1)|=4|\mathcal{A}(1)|$.
	\item  $|\mathcal{A}'(2)|=8|\mathcal{A}(2)|$.
	\item  $(\mathcal{A}')$ is 2$d$  edge regular
	\item $G_{walk}(\mathcal{A}')$  is $\epsilon$-expander
    \end{itemize}
\end{claim}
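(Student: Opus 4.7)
The first four bullets I would dispatch as bookkeeping, once the definition $\HPOWER[\mathcal{A}]$ is read (as the counts force) as the simplicial tensor product with the complete complex on $\{0,1\}$: its vertex set is $\{0,1\}\times V(\mathcal{A})$, each edge $\{a,b\}$ of $\mathcal{A}$ lifts to the $4$ edges $\{(a,i),(b,j)\}$, each triangle $\{a,b,c\}$ to the $8$ triangles $\{(a,i),(b,j),(c,k)\}$, and any lifted edge lies in $2d$ lifted triangles ($d$ base choices of the third vertex $c$ times $2$ choices of its label). All the real content is in the fifth bullet, the spectral gap of $G_{walk}(\mathcal{A}')$, which I would attack via the large abelian symmetry group of the tensor construction.

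The key object is $\Gamma:=(\mathbb{Z}/2\mathbb{Z})^{V(\mathcal{A})}$, acting on $\mathcal{A}'$ by independently flipping the label of each vertex. This acts by simplicial automorphisms, hence by graph automorphisms of $G_{walk}(\mathcal{A}')$, so it commutes with $M':=A(G_{walk}(\mathcal{A}'))/(4d)$. The plan is to decompose $\ell^2(V(G_{walk}(\mathcal{A}')))=\bigoplus_{S\subseteq V(\mathcal{A})} V_S$ into $\chi_S$-isotypic components and compute $M'$ on each. Since the subgroup acting faithfully on the fibre above edge $\{a,b\}$ is only $(\mathbb{Z}/2)^{\{a,b\}}$, the summand $V_S$ vanishes unless $S\subseteq\{a,b\}$ for some edge of $\mathcal{A}$. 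Only three component types survive: $V_\emptyset$, $V_{\{v\}}$ for $v\in V(\mathcal{A})$, and $V_{\{a,b\}}$ for $\{a,b\}\in\mathcal{A}(1)$; the dimensions add to $|\mathcal{A}(1)|+2|\mathcal{A}(1)|+|\mathcal{A}(1)|=4|\mathcal{A}(1)|$, confirming that I have all of them.

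I would then handle the three components separately. On $V_\emptyset$ (fibre-constant functions), the projection $\pi$ conjugates $M'$ to the base walk operator $M:=A(G_{walk}(\mathcal{A}))/(2d)$, giving nontrivial eigenvalues at most $1-\epsilon$. On $V_{\{v\}}$, a direct computation at $\tilde{v}_0=\{(v,0),(u,0)\}$, using that only neighbours of the form $\{(v,0),(w,k)\}$ are supported and that both values of $k$ contribute the same value $g(w)$, produces $M'|_{V_{\{v\}}}=\tfrac{1}{2}M_{\mathcal{A}_v}$ -- half the normalised random walk on the ($d$-regular) link of $v$, with eigenvalues in $[-\tfrac12,\tfrac12]$. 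On the one-dimensional $V_{\{a,b\}}$, every neighbour of the unique supporting fibre lies in a fibre not containing both $a$ and $b$, forcing $M'|_{V_{\{a,b\}}}=0$. Combining gives $\lambda(M')\le\max(1-\epsilon,\tfrac12)$. Phrased as a trace computation, matching the paper's stated approach, the same decomposition yields
\[\operatorname{tr}\bigl((M')^{2k}\bigr)=\operatorname{tr}(M^{2k})+2^{-2k}\sum_{v\in V(\mathcal{A})}\operatorname{tr}\bigl(M_{\mathcal{A}_v}^{2k}\bigr),\]
and taking $(2k)$-th roots as $k\to\infty$ recovers the same bound.

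The hard part is the factor $\tfrac12$ in $M'|_{V_{\{v\}}}=\tfrac12 M_{\mathcal{A}_v}$: without it, the link eigenvalues, which can be arbitrarily close to $1$ for a general $d$-regular $\mathcal{A}$ with no a priori control over its links, would destroy the gap and the claim would fail. That factor is exactly what the doubling in $\HPOWER$ produces: the new complex forces a free $2$-way label choice on the entering vertex $w$, which contributes a factor of $2$ to the numerator while the total degree grows to $4d$, yielding the ratio $2/(4d)=1/(2d)$ rather than $1/d$. Verifying this factor carefully, and in particular checking that the ``entering'' neighbours of the type $\{(u,0),(w,k)\}$ (in which $v$ is absent) really do lie outside the support of $V_{\{v\}}$, is the single delicate step; everything else is routine bookkeeping. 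The resulting statement must be read as $G_{walk}(\mathcal{A}')$ being $\min(\epsilon,\tfrac12)$-expanding, i.e.\ the gap of $\mathcal{A}$ is preserved up to the universal lower bound $\tfrac12$ coming from the $V_{\{v\}}$ component.
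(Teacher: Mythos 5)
Your proposal takes a genuinely different route from the paper and, in doing so, exposes a flaw in the paper's own proof. The paper argues via the trace method: Lemma~\ref{lem:cycles} claims that each closed $2m$-walk in $G_{walk}(\mathcal{A})$ lifts to exactly $2^{2m-1}$ closed walks in $G_{walk}(\mathcal{A}')$, giving $\operatorname{tr}(A'^{2m})=2^{2m+1}\operatorname{tr}(A^{2m})$, and then Lemma~\ref{lem:convrate} reads off $\lambda'=\lambda$ from the trace ratio. But the lift count is not uniform over base cycles: in the walk $\{x,y\}\to\{y,z\}\to\{z,w\}\to\{y,z\}\to\{x,y\}$ both labels of the starting edge get ``reset'' before the return, so only $2$ of the $4$ label choices are free and there are $4$ lifts, not the claimed $2^{3}=8$; by contrast $\{x,y\}\to\{x,z\}\to\{x,w\}\to\{x,z\}\to\{x,y\}$ does lift $8$ ways. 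Your isotypic decomposition under $(\mathbb{Z}/2\mathbb{Z})^{V(\mathcal{A})}$ gives the correct trace identity,
\[
\operatorname{tr}\bigl(A'^{2m}\bigr)=2^{2m}\Bigl(\operatorname{tr}\bigl(A^{2m}\bigr)+\sum_{v}\operatorname{tr}\bigl(A_{\mathcal{A}_v}^{2m}\bigr)\Bigr),
\]
which agrees with the paper's formula at $m=1$ (where $\sum_v\operatorname{tr}(A_{\mathcal{A}_v}^2)=\operatorname{tr}(A^2)=2d|\mathcal{A}(1)|$) but diverges from it for $m\ge 2$ (e.g.\ for $\mathcal{A}$ a single triangle one gets $384$ vs.\ the paper's $576$ at $m=2$). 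Your final bound $\lambda(G_{walk}(\mathcal{A}'))=\max\bigl(\lambda(G_{walk}(\mathcal{A})),\tfrac12\bigr)$ is therefore the correct one: since each link contributes a top eigenvalue $1$, the component $V_{\{v\}}$ unavoidably introduces a nontrivial eigenvalue exactly $\tfrac12$, so the fifth bullet should read ``$G_{walk}(\mathcal{A}')$ is $\min(\epsilon,\tfrac12)$-expander.'' This correction does not break the paper's downstream use of HPOWER (where only a fixed positive gap is needed), but the lemma as stated is literally false whenever $\epsilon>\tfrac12$, and the paper's cycle-counting proof does not go through. Your symmetry-based decomposition is both cleaner and more informative than the trace computation it replaces, since it also identifies the $V_{\{a,b\}}$ component on which $M'$ vanishes and hence pins down the full spectrum.
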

\begin{defn}
    $\mathcal{A}'$  is over $(0,1)\times V(\mathcal{A})$: 

    For every $e \in E(\mathcal{A})$ \[\{ 0,1 \} \times e \in E(\mathcal{A'})\]

\end{defn}

\medskip \medskip

We define
\[ V:=V(\mathcal{A})\text{  }V':=V(\mathcal{A}')\]
We think of $G_{Walk}(\mathcal{A'})$ as a graph on vertex set \[U':=V(G_{Walk}')= { V'\choose{2} }\] The walk is by the matrix $A'_{walk}$.
And similarly for $G_{Walk} (\mathcal{A})$.

\begin{defn}
    We define a projection $p:E(\mathcal{A}) \rightarrow E(\mathcal{A}')$ \[x\times e \rightarrow e\] where $x\in\{ 0,1 \}, e\in E(\mathcal{A})$.

    Similarly, We define a projection  $q:E(\mathcal{A}) \rightarrow \{ 0,1 \}$ \[x\times e \rightarrow x\]
\end{defn}

\subsection{Properties of HPOWER}
\label{sub:PropertiesOfMakeInv}

The first 3 claims are obvious.

Let $J$ be the Johnson graph  $ J( V',2)$

\begin{claim}
    For $v,w \in U'$ 
    \[v \sim^{A'_{walk}} w  \iff v \sim^J w \text{ and } pv \sim^ { A_{walk} }  pw \] 
\end{claim}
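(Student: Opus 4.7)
The plan is to unfold the product structure of $\mathcal{A}'$ and reduce the claim to elementary set theory. The key observation is that each triangle $t=\{a,b,c\}\in \mathcal{A}(2)$ lifts to $2^3=8$ triangles in $\mathcal{A}'$, namely $\{(x,a),(y,b),(z,c)\}$ for $(x,y,z)\in\{0,1\}^3$, and similarly each edge $e=\{a,b\}\in \mathcal{A}(1)$ lifts to the $4$ edges $\{(x,a),(y,b)\}$ for $(x,y)\in\{0,1\}^2$. In particular $p$ maps any edge of $\mathcal{A}'$ inside a triangle $t'\in \mathcal{A}'(2)$ to an edge of $\mathcal{A}$ inside the underlying $t=p(t')\in \mathcal{A}(2)$.

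For the forward direction, suppose $v\sim^{A'_{walk}} w$, so there is some $t'\in \mathcal{A}'(2)$ with $v,w\subset t'$. Since $v,w$ are distinct two-element subsets of a three-element set, $|v\cap w|=1$, giving $v\sim^J w$. Projecting, $pv$ and $pw$ are two distinct $2$-subsets of $p(t')\in \mathcal{A}(2)$, so $pv\sim^{A_{walk}} pw$.

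For the reverse direction, assume $v\sim^J w$ and $pv\sim^{A_{walk}} pw$. Then $pv$ and $pw$ are two distinct edges of a common triangle $t=\{a,b,c\}\in\mathcal{A}(2)$; without loss of generality, $pv=\{a,b\}$ and $pw=\{a,c\}$. The Johnson adjacency $v\sim^J w$ forces $v$ and $w$ to share exactly one vertex in $V'$, and this vertex must project to the unique shared vertex of $pv,pw$, which is $a$. Hence the shared vertex is $(x,a)$ for some $x\in\{0,1\}$, and we may write $v=\{(x,a),(y,b)\}$, $w=\{(x,a),(z,c)\}$ for some $y,z\in\{0,1\}$. Because all $2^3$ liftings of $t$ belong to $\mathcal{A}'(2)$, the triangle $t'=\{(x,a),(y,b),(z,c)\}$ is in $\mathcal{A}'(2)$ and contains both $v$ and $w$, giving $v\sim^{A'_{walk}} w$.

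The argument is essentially bookkeeping once the product structure of $\mathcal{A}'$ is made explicit; the only point requiring care is in the reverse direction, namely that the Johnson adjacency forces the shared $V'$-vertex of $v$ and $w$ to lift the shared $V$-vertex of $pv$ and $pw$, so that the three lifted vertices sit consistently inside one of the $8$ available liftings of $t$.
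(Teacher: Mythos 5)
Your proof is correct and follows essentially the same line of reasoning as the paper, just carried out in full: both arguments reduce the claim to the observation that faces of $\mathcal{A}'$ are exactly the $\{0,1\}$-labelings of faces of $\mathcal{A}$, so adjacency in the edge-walk on $\mathcal{A}'$ is equivalent to Johnson adjacency (one shared $V'$-vertex) together with adjacency of the projections in $\mathcal{A}$. The paper states this only as an informal "explanation" — it notes the shared vertex gives Johnson adjacency and waves at the converse with "we don't demand any conditions on $a,a',b,b'$" — whereas you supply the one point that actually needs an argument in the converse, namely that the shared $V'$-vertex of $v$ and $w$ is forced to lie over the shared $V$-vertex of $pv$ and $pw$, so the three lifted vertices assemble into one of the $2^3$ liftings of the triangle. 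That is the right thing to nail down, and your write-up is cleaner and more complete than the paper's own.
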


Let's explain the condition.

Assuming \[\{(a,x),(b,y)\} \sim \{(a',x'),(b',y')\}\] 
If the walk is valid, one of the vertices of $U'$ is common to both of them.
This is equal to the condition that should be adjacent in $J$.

The second condition is that under $p$, they are adjacent in $A$.
It is true if \[\{x,y\} \sim^A  \{x',y'\}\] We don't demand any conditions on $a,a',b',b$ as the edges are defined as a tensor product on edges. 

\begin{lem}\label{lem:cycles}
    With complex $\mathcal{A}$ and $\mathcal{A'}=HPOWER[\mathcal{A}]$ 

    \[ tr(A'^{2m})=tr( A^{2m})2^{2m+1}\] 
\end{lem}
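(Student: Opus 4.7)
The plan is to apply the trace method, interpreting both sides as weighted counts of closed walks. Specifically, $tr(A^{2m})$ counts closed walks of length $2m$ in $G_{walk}(\mathcal{A})$, and $tr({A'}^{2m})$ counts closed walks of length $2m$ in $G_{walk}(\mathcal{A}')$. I would relate the two via the natural projection $p:V(G_{walk}(\mathcal{A}'))\to V(G_{walk}(\mathcal{A}))$ sending $\{(x,u),(y,v)\}\mapsto\{u,v\}$, so that every closed walk $W'=(v_0,\ldots,v_{2m}=v_0)$ in $G_{walk}(\mathcal{A}')$ projects to a closed walk $W=(w_i):=(p(v_i))$ in $G_{walk}(\mathcal{A})$ and, consequently, $tr({A'}^{2m})=\sum_W N(W)$, where $N(W)$ is the number of closed lifts of $W$.

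Next, I would count $N(W)$ by a labeling argument. Using the characterization of adjacency in $G_{walk}(\mathcal{A}')$ established above, a lift of $W$ is equivalent to the data of a labeling $\ell_i\colon w_i\to\{0,1\}$ of the two endpoints of each $w_i$, subject to the agreement condition $\ell_i(z_i)=\ell_{i+1}(z_i)$ at each shared endpoint $z_i=w_i\cap w_{i+1}$ (indices cyclic mod $2m$, so that the closure $v_{2m}=v_0$ is built in). I would organize these constraints into an auxiliary graph $H_W$ on the $4m$ endpoint-label variables $(i,v)$ with $v\in w_i$, whose $2m$ edges encode the equality constraints. Since each vertex of $H_W$ has degree at most $2$, $H_W$ decomposes as a disjoint union of paths and cycles, and valid labelings are exactly those constant on each connected component. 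Hence $N(W)=2^{c(H_W)}=2^{2m+\mu(H_W)}$ where $\mu(H_W)$ is the cyclomatic number.

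The crux is then to match $\sum_W 2^{\mu(H_W)}$ against $2\cdot tr(A^{2m})$, so that $tr({A'}^{2m})=\sum_W 2^{2m+\mu(H_W)}=2^{2m+1}tr(A^{2m})$. To handle $\mu(H_W)$, I would note that two consecutive constraint-edges of $H_W$ share a vertex iff $z_i=z_{i+1}$, from which one deduces that a cycle in $H_W$ can only wrap the full cyclic index set; in particular $\mu(H_W)\in\{0,1\}$, with $\mu(H_W)=1$ exactly when the shared endpoints $z_0,\ldots,z_{2m-1}$ are all equal. Such ``constant-center'' walks correspond bijectively to closed walks of length $2m$ in the link of the retained vertex. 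The main obstacle will be the final identification step: showing that the contribution of constant-center walks, after summation, matches $tr(A^{2m})$ so as to yield the clean factor of $2^{2m+1}$. I would approach this either through a combinatorial pairing between constant-center walks and arbitrary closed walks induced by a natural $\mathbb{Z}_2$-symmetry of the HPOWER lift, or by reading the statement as the upper bound $tr({A'}^{2m})\le 2^{2m+1}tr(A^{2m})$, which already follows from the labeling count and suffices for the trace-method conclusion $\lambda(G_{walk}(\mathcal{A}'))\le\lambda(G_{walk}(\mathcal{A}))$ (up to subexponential factors) needed by \cref{claim:HPOWER}.
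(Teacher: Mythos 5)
Your constraint-graph analysis is actually sharper than the paper's own argument, and it exposes a genuine problem. You are right that $N(W)=2^{c(H_W)}=2^{2m+\mu(H_W)}$ and that $\mu(H_W)\in\{0,1\}$, with $\mu=1$ precisely when the shared vertices $z_1,\ldots,z_{2m}$ all coincide: since the $v$-coordinate is constant along every edge $(i,z_i)\sim(i+1,z_i)$ of $H_W$, the only possible cycle is $(1,v)-(2,v)-\cdots-(2m,v)-(1,v)$, which requires a common $\mathcal{A}$-vertex $v\in\bigcap_i w_i$. The paper's proof instead asserts ``freedom of $2$ choices per step'' for $j=2,\ldots,2m$ and concludes $A'^{2m}_{uu}=2^{2m-1}A^{2m}_{pu,pu}$, which tacitly assumes the closing constraint $\ell_{2m}(z_{2m})=\ell_1(z_{2m})$ is automatically satisfied. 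By your count it is free only when $\mu=1$; for a generic closed walk it kills half the lifts, giving $2^{2m-2}$ rather than $2^{2m-1}$.

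So neither of your proposed rescues for the exact equality can work: the lemma is false as stated. Take $\mathcal{A}$ to be a single triangle, so $G_{walk}(\mathcal{A})=K_3$ and $tr(A^4)=2^4+2\cdot(-1)^4=18$. From a fixed start the six closed $4$-walks split into two constant-center ones ($abab$, $acac$) and four others, so your formula gives $tr(A'^4)=3\bigl(2\cdot 2^5+4\cdot 2^4\bigr)=384$, while the lemma asserts $18\cdot 2^5=576$; a direct hand computation of $G_{walk}(\mathcal{A}')$ (which is $4$-regular on $12$ vertices, with $A'^4_{uu}=32$ for every $u$) confirms $384$. Your second fallback is therefore the right move, and you should observe you actually get two-sided control for free: since $\mu\ge 0$ one also has $tr(A'^{2m})\ge 2^{2m}\,tr(A^{2m})$, hence
\[
2^{2m}\,tr(A^{2m})\ \le\ tr(A'^{2m})\ \le\ 2^{2m+1}\,tr(A^{2m}).
\]
After normalizing by the respective degrees (the walk graph of $\mathcal{A}'$ is exactly twice as regular as that of $\mathcal{A}$) this reads $tr(P^{2m})\le tr(P'^{2m})\le 2\,tr(P^{2m})$, and the bounded constant vanishes under the $(2m)$-th root, so the intended conclusion $\lambda\bigl(G_{walk}(\mathcal{A}')\bigr)=\lambda\bigl(G_{walk}(\mathcal{A})\bigr)$ of \cref{lem:convrate} can still be recovered. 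It is worth noting that the lower bound $\lambda'\ge\lambda$ has an even more direct proof: the averaging map $\Pi f(\{x,y\})=\tfrac14\sum_{\ell}f(\{x,y\},\ell)$ intertwines $P'$ with $P$, so every eigenvalue of $P$ is an eigenvalue of $P'$.
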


\begin{proof}
     Now, we inspect the circles of length $2m$ of $A'_{walk}$. 
    We can easily see that given \[u_1, \ldots u_{2m+1}=u_1 \in U'\text{ a circle in }A'_{walk}\] then \[p u_1, \ldots p u_{ 2m+1 }= pu_1 \in U\text{ a circle in }A_{ walk }\]
    
    So, this is a necessary condition. We will see that it is also sufficient.
    
    Let $u_1 \ldots u_{2m+1}=u_1 \in U'\text{ a circle in }A'_{walk}$.

    We assume \[u_j=\{ (a,x),(b,y) \}\] 
    As long as one element in $u_j$ is kept, and the condition on $p u_{j+1}$ is satisfied, the step is legal. 
    
    Assuming we keep $(a,x)$ , the next one could be $\{ (a,x),(0/1,z) \}$, provided that $\{ x,y,z \}$ is a valid triangle in $A_{walk}$.
    So, we can decide on $q( u_{j+1} \setminus u_j ) $ where $j\in 2\ldots 2m$ . That means we have freedom of $2$ choices per step. 
    We can also see that if the random walk on $\mathcal{A}$ is $d$-regular, the random walk on $A'$ is $2d$-regular.
    
    All in all for every $u$ \[{A'}^{2m}_{uu}= 2^{2m-1} A^{2m}_{pu,pu} \]
    Now
    \[ \sum_{u\in U'} {A'}^{2m}_{uu}= \sum_{u\in U'} 2^{2m-1} A^{2m}_{pu,pu}  \]
    We notice that in the sum every $A^{2m}_{ vv }$ for every $v\in U$ is obtained $4$ times.
    \[ = \sum_{v \in U} 2^{2m+1} A^{2m}_{vv} \]  
    
    Therefore, we have that \[tr(A'^{2m})=tr( A^{2m})2^{2m+1}\]
\end{proof}

\begin{lem}\label{lem:convrate}
    With complex $\mathcal{A}$ and $\mathcal{A'}=HPOWER[\mathcal{A}]$ 
   \[ \lambda(G_{walk}(\mathcal{A}))=\lambda(G_{walk}(\mathcal{A})\]
    \end{lem}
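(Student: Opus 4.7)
The plan is to run the trace method, using the cycle-counting identity of \cref{lem:cycles} to transfer spectral information between the two random walk graphs. Since $\mathcal{A}$ is $d$-regular, $G_{walk}(\mathcal{A})$ is $D$-regular with $D=2d$; the HPOWER construction doubles the edge-regularity (see \cref{claim:HPOWER}), so $G_{walk}(\mathcal{A}')$ is $2D$-regular. Writing $A,A'$ for the adjacency matrices and $\tilde A=A/D$, $\tilde A'=A'/(2D)$ for the corresponding random-walk (stochastic) matrices, the goal is to compare the non-trivial spectra of $\tilde A$ and $\tilde A'$.

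Dividing the identity $\operatorname{tr}(A'^{\,2m})=2^{2m+1}\operatorname{tr}(A^{2m})$ of \cref{lem:cycles} by $(2D)^{2m}=2^{2m}D^{2m}$ immediately yields
\[
\operatorname{tr}(\tilde A'^{\,2m})=2\,\operatorname{tr}(\tilde A^{2m}) \qquad \text{for every }m\ge 1.
\]
Thus if $\{\mu_i\}_{i=1}^{n}$ denotes the spectrum of $\tilde A$ and $\{\mu_j'\}_{j=1}^{4n}$ that of $\tilde A'$, then $\sum_j (\mu_j')^{2m}=2\sum_i \mu_i^{2m}$ for all $m\ge 1$.

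This moment identity pins down the non-zero squared eigenvalues of $\tilde A'$: the atomic measure on $(0,1]$ supported on the non-zero values of $(\mu_j')^{2}$ must coincide with twice the analogous measure for $\tilde A$, so every non-zero element of $\{(\mu_j')^{2}\}$ occurs in $\{\mu_i^{2}\}$ with exactly twice the multiplicity, while the remaining $2n$ eigenvalues of $\tilde A'$ are forced to equal zero. In particular the second largest element of $|\operatorname{spec}(\tilde A')|$ equals the second largest element of $|\operatorname{spec}(\tilde A)|$, which is exactly $\lambda(G_{walk}(\mathcal A'))=\lambda(G_{walk}(\mathcal A))$.

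The main obstacle is formalizing the moment-to-multiset step, since the identity only constrains moments of order $\ge 1$ and is therefore blind to zero atoms (the zeroth moments $4n$ and $n$ obviously disagree). A clean way to bridge this gap is to compare the generating series $\sum_j \bigl(1-x(\mu_j')^{2}\bigr)^{-1}$ and $2\sum_i \bigl(1-x\mu_i^{2}\bigr)^{-1}$ as formal power series in $x$: their Taylor coefficients agree from order one onward, so these rational functions differ by a constant, and equality of their non-constant parts forces their poles, and hence the non-zero squared spectra with multiplicities, to match exactly. This is essentially Newton's identities applied to the elementary symmetric functions of $\{(\mu_j')^{2}\}$ versus the doubled multiset $\{\mu_i^{2},\mu_i^{2}\}$.
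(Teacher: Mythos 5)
The approach is the same as the paper's — divide the trace identity of \cref{lem:cycles} by the degree normalizations to get $\operatorname{tr}(\tilde A'^{\,2m})=2\operatorname{tr}(\tilde A^{2m})$ and read off spectral information — and in fact your Newton's-identities / generating-function treatment of the moment identity is cleaner than the paper's limit argument, which as written only extracts the spectral radius $\lim_m\sqrt[2m]{\operatorname{tr}(\tilde P^{2m})}=1$ on both sides and therefore proves nothing about $\lambda$. But your final sentence does not follow from what you established. You (correctly) deduce that the multiset of non-zero $(\mu'_j)^2$ is exactly the multiset of non-zero $\mu_i^2$ with every multiplicity doubled. Apply that to the top of the spectrum: if $\tilde A$ is the walk matrix of a connected, non-bipartite graph, then $\mu^2=1$ occurs with multiplicity $1$, so your conclusion forces $(\mu')^2=1$ to occur with multiplicity $2$ in $\operatorname{spec}(\tilde A')$. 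That means $\tilde A'$ has either eigenvalue $1$ twice (disconnected) or eigenvalues $\pm1$ (bipartite), and in either case the second-largest element of $|\operatorname{spec}(\tilde A')|$ is $1$, not $\lambda(G_{walk}(\mathcal A))$. So the ``in particular'' step, which silently treats only the strictly-sub-$1$ part of the spectrum as having been matched, is a gap: you need a separate argument that the trivial eigenspace of $\tilde A'$ has dimension one, and that argument would actually \emph{contradict} the moment identity you just derived.

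This tension is worth noticing: $G_{walk}(\mathcal A')$ is in fact connected and non-bipartite under the same hypotheses one uses for $G_{walk}(\mathcal A)$ (one can build explicit odd closed walks through the HPOWER fiber), so multiplicity $2$ at $|\mu'|=1$ is impossible. The only way out is that \cref{lem:cycles} cannot hold for all $m\geq1$ as stated. Indeed, a Fourier decomposition of $G_{walk}(\mathcal A')$ over the $\{0,1\}$-labels splits $A'$ into blocks $2A$, $2L_a$ (for each vertex $a$, with $L_a$ the link of $a$), and a zero block, giving $\operatorname{tr}(A'^{2m})=2^{2m}\bigl(\operatorname{tr}(A^{2m})+\sum_a\operatorname{tr}(L_a^{2m})\bigr)$; the terms $\sum_a\operatorname{tr}(L_a^{2m})$ and $\operatorname{tr}(A^{2m})$ coincide at $m=1$ but not for $m\geq2$ (already false when $\mathcal A$ is a single triangle). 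So both the paper's proof and yours inherit the same problem from the cycle-counting lemma, and the correct statement that emerges from the block decomposition is $\lambda(G_{walk}(\mathcal A'))=\max\bigl(\lambda(G_{walk}(\mathcal A)),\,\tfrac12\bigr)$, which is still enough for the HDZ application but is not literally the identity claimed.
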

    \begin{proof}
        Lets take $P'=\frac{A'}{2d}$, the normalized version of $A'$.
	\[
	    tr({ P' }^{2m})= \sqrt{ \sum { \lambda' }_i^{2m} } = \sqrt{ {\lambda}'^{2m}(1+o(1)) } = {\lambda'}^m+o({ \lambda' }^m)
	\]
	\medskip \medskip This is also true for $P$.
	
	And \[tr(P'^{2m})=\frac{tr({A'}^{2m})}{{ ( 2d ) }^{ 2m  } } = \frac{tr(P^{2m}) 2^{2m+1} }{2^{2m}} = tr(P^{2m}) 2   \] 
	So,
	
	\[ \lambda' = \lim_{m \rightarrow \infty} \sqrt[m]{tr(P'^{2m})} = \lim_{m \rightarrow \infty} \sqrt[m]{2} \sqrt[m]{tr(P^{2m})} = \lambda  \]
    \end{proof}

\begin{claim}
    There is a coloring $C$, order on $V$ such that complex $\mathcal{A}$ satisfies property $\Inv$ (\cref{propInv}).

\end{claim}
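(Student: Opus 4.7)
The plan is to verify that $\mathcal{A}' = HPOWER[\mathcal{A}]$ (which the claim evidently refers to, since this is the whole purpose of the HPOWER construction) satisfies property $\Inv$ under a naturally induced coloring and vertex ordering. I would first promote any strong $\chi$-coloring $C$ of $\mathcal{A}$ to a coloring $C'$ of $\mathcal{A}'$ by setting $C'((x,v)) := C(v)$. Every triangle of $\mathcal{A}'$ has the form $\{(a_1,u_1),(a_2,u_2),(a_3,u_3)\}$ with $\{u_1,u_2,u_3\}\in\mathcal{A}(2)$, so its three $C'$-colors are the three distinct $C$-colors of the $u_i$; thus $C'$ is a strong $\chi$-coloring, and each color class has size $K'_c = 2K_c$, which is automatically even (handling the parity built into \cref{propInv}).

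Next I would fix an ordering of $V'^c$ so that the shift by $K'_c/2 = K_c$ toggles the $\{0,1\}$ factor while preserving the underlying $V(\mathcal{A})$-component. Concretely, for each color $c$, order the $2K_c$ vertices of $V'^c$ as
\[
V'^c_i := (0, V^c_i) \text{ for } 1\le i\le K_c, \qquad V'^c_{K_c+i} := (1, V^c_i) \text{ for } 1\le i\le K_c.
\]
With this convention, $V'^c_{i+K_c}$ (indices taken modulo $2K_c$) is obtained from $V'^c_i$ simply by flipping the first coordinate and keeping the second coordinate fixed.

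Finally, I would check the biconditional of \cref{propInv} directly. By the HPOWER definition, a pair $\{(a,u),(b,v)\}$ lies in $\mathcal{A}'(1)$ iff $\{u,v\}\in\mathcal{A}(1)$, a condition that is independent of $a,b\in\{0,1\}$. Hence $\{V'^c_i, V'^d_j\} \in \mathcal{A}'(1)$ and $\{V'^c_{i+K_c}, V'^d_{j+K_d}\} \in \mathcal{A}'(1)$ are each equivalent to the single condition $\{V^c_{i'}, V^d_{j'}\} \in \mathcal{A}(1)$, where $i',j'$ are the second-coordinate indices of $V'^c_i, V'^d_j$. By the previous paragraph these second-coordinate indices are identical on both sides of the biconditional, so the two conditions coincide and $\Inv$ holds.

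There is no real obstacle here: the content of the claim is precisely that HPOWER was engineered so that edge-membership is invariant under the coordinate-flip that realizes the shift by $K'_c/2$. The only care needed is to align the ordering so that the arithmetic shift on indices matches the toggle of the $\{0,1\}$-coordinate, after which the verification is pure unpacking of definitions.
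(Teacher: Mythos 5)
Your proof is correct and takes essentially the same approach as the paper: the induced coloring $C'((x,v)) := C(v)$, the ordering $V'^c_i=(0,V^c_i)$, $V'^c_{i+K_c}=(1,V^c_i)$, and the observation that edge membership in $\mathcal{A}'$ depends only on the underlying $\mathcal{A}$-vertices (so it is invariant under the coordinate flip realized by the index shift). The paper is terser — it states the ordering and declares the rest "straightforward" — whereas you spell out the biconditional check, but there is no substantive difference in the argument.
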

\begin{proof}
    We can see that if $a\in V$ then there is no triangle that contains $\{ (0,a),(1,a) \} $, as this would suggest that $\{ a,a \}$ is an edge in $ \mathcal{A}$.
    Therefore, given a vertex $V^c_i$, we define  \[V'^c_i=(0,a)\] 
    \[V'^c_{i+K_c} =(1,a)\]  This is a valid coloring. The reason is that if $\{ (x,a),(y,b),(z,c) \}$ is a triangle iff $\{ a,b,c \}$ is a triple. 
    It is straight forward to see that property $\Inv$ is satisfied.
\end{proof}

\section{Reducing $G_{dual}$}
\label{ssub:analyzing_the_expansion_of_g__cay}

In this section we analyze the expansion properties of $G_{dual}$.
\subsubsection{Definitions}
\label{ssub:definitions}

We have several definitions here.

We call $S_{ cd }$ the generators obtained by the edges of index (originally colors) $c,d \in [\chi]$. 
\[S_{cd}:=\{  (c,i) \cdot (d,j) \mid \  \{(c,i),(d,j)\} \in \mathcal{S}(1) \} \]  
And 
\[  S^{ijk}:= \bigcup_{\{c,d\} \in \{i,j,k\}} S_{cd} \]

\[M_{cd} := Cay(G_{ c}  G_{d },S_{ cd })\]
\[M^k_{cd} := Cay(G_{ c}  G_{d },P_k(S_{ cd }))\]
where $P_k$ is the natural projection $P_k : G_c G_d \rightarrow G_k$ 
for $k \in \{c,d\}$.

\begin{lem}
    \label{allC}
Let $N= \chi-1$.
   Exists $U_1 \ldots U_N$ sets of 2-elements in ${[\chi] \choose{2}}$ such that \[\bigcupdot U_i= {[\chi] \choose 2}\] and each 2-element appears once in one of the $U_i$s s.t.

    \[ \sigma(G_{dual}) \ge 
    \sum_{l=1}^N    \min_{ij \in U_l} \left( \sigma(M_{ij})  \right)   \]
\end{lem}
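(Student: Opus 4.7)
\textbf{Proof plan for \cref{allC}.}

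The plan is to apply Baranyai's 1-factorization theorem (as already cited in the paragraph just above the statement) to obtain the partition, and then to recognize each partition-block as inducing a Cartesian-product walk whose spectral gap is controlled by \cref{lem:square}. Since the HDZ hypothesis gives $\chi$ even, Baranyai (equivalently, the classical $1$-factorization of $K_\chi$) decomposes $\binom{[\chi]}{2}$ into $N=\chi-1$ perfect matchings $U_1,\ldots,U_N$: each $U_l$ is a set of $\chi/2$ pairwise-disjoint pairs, and every pair appears in exactly one $U_l$.

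Writing $A^{ij}$ for the adjacency matrix of $Cay(\mathcal{G},S_{ij})$ (viewed as acting on all of $\mathcal{G}$), one has
\[
A_{G_{dual}} \;=\; \sum_{\{i,j\}\in\binom{[\chi]}{2}} A^{ij} \;=\; \sum_{l=1}^{N} \Pi_l, \qquad \Pi_l := \sum_{\{i,j\}\in U_l} A^{ij}.
\]
Each generator in $S_{ij}$ acts only on coordinates $i$ and $j$ of $\mathcal{G}=\prod_c G_c$, so within a fixed matching $U_l$ the summands $A^{ij}$ act on mutually disjoint (and, since $U_l$ is a \emph{perfect} matching, collectively exhausting) coordinate pairs. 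Hence $\Pi_l$ is precisely the adjacency matrix of the Cartesian product $\square_{\{i,j\}\in U_l} M_{ij}$. Iterating \cref{lem:square} then gives
\[
\sigma(\Pi_l) \;=\; \min_{\{i,j\}\in U_l} \sigma(M_{ij}).
\]

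For the final step, write each spectral gap in Laplacian form: $L(A) = d(A)\,I - A$ for a $d(A)$-regular adjacency matrix $A$. By linearity $L(A_{G_{dual}})=\sum_l L(\Pi_l)$, and subadditivity of the minimum over $\mathbf{1}^\perp$ yields
\[
\sigma(G_{dual}) \;=\; \min_{\substack{x\perp\mathbf{1}\\\|x\|=1}} x^{\top} L(A_{G_{dual}})\,x \;\geq\; \sum_{l=1}^N \min_{\substack{x\perp\mathbf{1}\\\|x\|=1}} x^{\top} L(\Pi_l)\,x \;=\; \sum_{l=1}^N \sigma(\Pi_l),
\]
which combined with the identity above is exactly the claimed inequality.

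The only conceptual step is recognizing each $\Pi_l$ as a Cartesian product over a grouping of coordinates, which is automatic once $U_l$ is a perfect matching; the rest is Baranyai and a standard variational argument. I expect the only (mild) obstacle to be bookkeeping of degrees, since the $|S_{ij}|$ need not all coincide, but the Laplacian formulation handles this transparently because $d(A_{G_{dual}}) = \sum_l d(\Pi_l)$ holds automatically from the disjoint partition.
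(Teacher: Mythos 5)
Your proposal is correct and follows essentially the same route as the paper: Baranyai (i.e., the $1$-factorization of $K_\chi$) gives the perfect matchings $U_l$, each block is recognized as a Cartesian product $\square_{\{i,j\}\in U_l}M_{ij}$ controlled by \cref{lem:square}, and the gaps are summed over $l$. The only cosmetic difference is the final combination step — you use the Laplacian quadratic-form variational bound, while the paper passes through the convex combination $A_{cay}=\sum_l a_l A^l$ of normalized walk matrices and then multiplies back by degrees — but these are the same superadditivity fact, and your formulation actually handles the unequal $|S_{ij}|$ more transparently than the paper's weight bookkeeping.
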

\begin{proof}
    We quote from \autocite[Abstract]{baranyai1979edge}

   \begin{displayquote} If $h | n$ then the $h$ -element subsets of an $n$ -element set can be partitioned into $\left(\begin{array}{l}n-1 \\ h-1\end{array}\right)$ classes so that every class contains $n / h$ disjoint $h$ -element sets and every $h$ -element set appears in exactly one class. \footnote{I would like to thank Liran Katzir for pointing me to the paper which has precisely the wanted claim} \end{displayquote} 

 \medskip \medskip   Lets apply this for $h=2$ and $n=\chi$.We set $N= {\chi-1}$.
   Using the theorem, we get $U_1 \ldots U_N$ sets such that \[ \bigcupdot U_i= {[\chi] \choose 2}\]
   And such that for $U_i = \{ u_1 , u_2 \ldots u_{\frac{\chi}{2}}  \}$ \[   u_1 \cupdot u_2 \ldots \cupdot u_{\frac{\chi}{2}}  = [\chi] \] 
   For each set $U_l$, we relate a graph
\[ 
    G^{l}:= \square_{ \{ ij\} \in U_l   }M_{ij}
\]
Notice that $G^{l}$ is on the same vertices as $G_{dual}$. 

  \medskip \medskip  On a r.w. on $G_{dual}$,  we chose uniformly an edge $u$ from $S_{cd}$ in probability that is $\propto |S_{cd}|$ and move to the incident vertex.

\begin{claim}
    The last method of selection is further equivalent to the following:

Choosing $U_k$ in probability $\propto \sum_{\{lm\} \in U_k} |S^{lm} |$, then choosing an edge uniformly from $G_{l}$. 

\end{claim}

\medskip \medskip    All in all we concluded that
    \[A_{cay}= \sum_{k=1}^N a_k A^{k}  \]
    for \[a_k= \frac{\sum_{ \{lm\} \in U_k} |S^{lm} |}{d(G_{dual})} \]
    Notice that all $A^{k}$ are distinct.
    Generally (\cref{lem:square}): 
    \[\sigma(A\square B) = \min (\sigma(A) , \sigma(B) )\]
    Where $\sigma$ is the spectral gap of the graph $\sigma(A)$ . And specifically,

\[\sigma(G^{l})= \min_{\{ ij\} \in U_l  }   \sigma\left( C^{ij} \right)\]

    We want to plug it in.
    \begin{align*}
	\lambda(G_{dual}) \le& \sum_{l=1}^N a_l \lambda(G^{l}) \\ 
	1-\lambda(G_{dual}) \ge& \sum_{l=1}^N a_l (1-\lambda(G^{l})) \\
       =& \sum_{l=1}^N \frac{d(G^{l})}{d(G_{dual})} (1-\lambda(G^{l})) \\
    \end{align*}
    So 
    \[	\sigma(G_{dual}) \ge \sum_{l=1}^N  \sigma(G^{l})\]
    
    And finally,
    \[ \sigma(G_{dual}) \ge 
    \sum_{l=1}^N   \min_{ij \in U_l} \left( \sigma(M_{ij}) \right)   \]

\end{proof}

We state the lemma that we got:

\begin{lem}\label{genthmA}

   \medskip \medskip Let $\mathcal{G}  = G_1 \times G_2 \ldots \times G_\chi $ where $G_i$ are groups. 
   \medskip \medskip Suppose we have $F_1,F_2 \ldots , F_\chi$ symmetric subsets of the corresponding groups

   Let $\mathcal{S}$ a 2-complex s.t. \[\mathcal{S}(2) \subset \cup_{m\in ( {[\chi] \choose{3}} )}  \{F_{m_1},F_{m_2} ,F_{m_3}\}\] 

For simplicity, we assume $2 \mid \chi$ and $\chi\ge 3$ .
    
\medskip \medskip     Suppose that $\exists \nu<1$ s.t. $\forall ij \in  { [\chi] \choose{2} }$

\begin{equation}\tag{\ding{72}} \label{eq:einstein}
\min \left( \sigma(Cay(G_iG_j,S_{ ij }))  \right) \ge \frac{(1-\nu)|\mathcal{S}(1)|}{\chi-1} \\
 \end{equation}	

 Then, the complex that is defined by  $\mathcal{C}=Sc[\mathcal{S}, \mathcal{G}] $ satisfies:

 \[\lambda(\mathcal{C}) \le \sqrt{\frac{1}{2} + \frac{1}{2} f\left( \nu ,  \lambda(G_{ walk }(\mathcal{A})) \right)}  \]

\end{lem}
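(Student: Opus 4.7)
The plan is to combine the CTS machinery of Part~\ref{parta} with the Baranyai-based decomposition of $G_{dual}$ captured in \cref{allC}. The argument falls into three stages, chained together by already-proved results in the paper.

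First, I would verify that $\mathcal{C} = Sc[\mathcal{S}, \mathcal{G}]$ fits the commutative-triplet-structure framework of \cref{def:CTSfor}, so that the CTS Theorem \cref{main_thm} applies. This is essentially the content of \cref{isCTS}: conditions A and E are hypotheses on $\mathcal{S}$; condition B (commutativity of the two generators in a type) holds because each edge of $\mathcal{S}$ has its two vertices in distinct factors $F_i, F_j$ of the product $\mathcal{G}$, so the two corresponding group elements act on disjoint coordinates and commute; condition C follows from the symmetry of each $F_i$ together with the edge-symmetry of $\mathcal{S}$; and condition D, the free-like action, is precisely the ``template'' argument of \cref{isCTS} -- two distinct types $t \neq t'$ producing the same group translation must move the same pair of coordinates, which then forces $t' = t^{-1}$ entrywise. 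Invoking \cref{main_thm} gives
\[
\lambda(G_{walk}(\mathcal{C})) \le \sqrt{\frac{1}{2} + \frac{1}{2}\, \lambda\bigl(G_{dual} \raisebox{.5pt}{\textcircled{\raisebox{-.9pt}{z}}} L\bigr)},
\]
where $L = G_{walk}(\mathcal{S})$.

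Second, I would apply the zig-zag spectral bound \cref{zg} to replace $\lambda(G_{dual} \raisebox{.5pt}{\textcircled{\raisebox{-.9pt}{z}}} L)$ by $f(\lambda(G_{dual}), \lambda(L))$. Since $\mathcal{S}$ is, by construction, isomorphic to the underlying complex $\mathcal{A}$ referenced in the statement, $\lambda(L) = \lambda(G_{walk}(\mathcal{A}))$.

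Third -- the quantitative heart of the proof -- I would bound $\lambda(G_{dual}) \le \nu$ by feeding hypothesis~\eqref{eq:einstein} into \cref{allC}. Baranyai's decomposition (the input to \cref{allC}) partitions $\binom{[\chi]}{2}$ into $N = \chi - 1$ perfect matchings $U_1, \ldots, U_N$ of $[\chi]$, and \cref{allC} then yields
\[
\sigma(G_{dual}) \ge \sum_{l=1}^{N} \min_{ij \in U_l} \sigma(M_{ij}) \ge (\chi-1) \cdot \frac{(1-\nu)\,|\mathcal{S}(1)|}{\chi-1} = (1-\nu)\,|\mathcal{S}(1)|.
\]
Since $G_{dual} = Cay(\mathcal{G}, \mathcal{S}(1))$ has degree $|\mathcal{S}(1)|$, normalizing produces $\lambda(G_{dual}) \le \nu$. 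Plugging this into the zig-zag bound of the second stage and then into the CTS bound of the first stage delivers the claimed inequality.

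The main obstacle is a bookkeeping one rather than a conceptual one: the peculiar normalization $\frac{(1-\nu)|\mathcal{S}(1)|}{\chi-1}$ in hypothesis~\eqref{eq:einstein} is calibrated precisely so that summing the $\chi-1$ contributions from Baranyai and dividing by $|\mathcal{S}(1)| = \deg(G_{dual})$ yields the clean inequality $\lambda(G_{dual}) \le \nu$. Keeping the degree and normalization conventions consistent across $M_{ij}$, the Cartesian-product auxiliary graphs $G^l$ used inside \cref{allC}, and $G_{dual}$ itself is the only step requiring genuine care; the rest is direct assembly of results already proved in the earlier parts of the paper.
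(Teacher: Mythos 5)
Your proposal is correct and follows essentially the same route as the paper: verify the CTS conditions (via \cref{isCTS}), invoke the CTS Theorem and the zig-zag bound, and then bound $\lambda(G_{dual})\le\nu$ by combining hypothesis~\eqref{eq:einstein} with the Baranyai decomposition in \cref{allC}. In fact your computation $\sigma(G_{dual})\ge(\chi-1)\cdot\frac{(1-\nu)|\mathcal{S}(1)|}{\chi-1}=(1-\nu)|\mathcal{S}(1)|$ is the correct form; the paper's own proof contains a typo (it writes $\binom{\chi-1}{2}$ in the denominator of the intermediate bound on $\sigma(G^l)$ and also mis-cites itself rather than \cref{allC}), which your version silently repairs.
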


\begin{proof}
If condition \ref{eq:einstein} is true, then all the graphs $G^ {k}$ has spectral gap of at least \[\frac{(1-\nu)|\mathcal{S}(1)|}{{{\chi-1}\choose{2}}}\] 

Then, we have by \cref{genthmA}: 
\[ \sigma(G_{dual}) \ge \sum_{ l=1}^N \sigma(G^{l}) \ge (1-\nu) |\mathcal{S}(1)|  \]
\[ \lambda(G_{dual}) \le \nu \]

\end{proof}

\section{Random model on Lie groups }

\label{later}
Unfortunately, we have no out of the box way to verify that indeed condition \ref{eq:einstein} is satisfied. But we can verify it in case of Lie groups when we randomize elements.

In this section, we use the notation as appears in \cite{tao2015expansion}:

\begin{defn}[Expanding set] \label{defn:eps-expanding}
    $\{ a,b \}$ is $\epsilon$-expanding for group $G$ if $Cay(G,\{a,b,a^{-1},b^{-1}\})$ is $\epsilon$-expanding. 
\end{defn}

First we proof this lemma: 
\randlem

   \begin{proof}
       We randomize $f_1 \ldots f_K$ and we want that every pair of $\{f_i,f_j\}$ would be $\epsilon$-expanding.
       For this, we use the following theorem:
\begin{displayquote}
    \begin{thm}\label{rpairs}
        ( \cite[Theorem 1.2] {breuillard2013expansion} Random pairs of elements are expanding). Suppose that G is a finite simple group of Lie type and that \(a, b \in G\) are selected uniformly at random. Then with probability at least \(1-C|G|^{-\delta},\{a, b\}\) is $\epsilon$-expanding for some \( C, \epsilon, \delta>0 \) depending only on the rank of \(G .\)
    
    \end{thm}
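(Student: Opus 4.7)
The plan is to invoke the Bourgain-Gamburd machine, which for a fixed finite simple group $G$ of Lie type reduces producing a spectral gap for $\operatorname{Cay}(G,S)$ to three ingredients: (i) quasi-randomness of $G$, (ii) a product theorem in $G$, and (iii) non-concentration of the $S$-random walk away from proper subgroups. The randomness of the pair $(a,b)$ is only needed for (iii); (i) and (ii) are properties of $G$ alone and can be imported as black boxes.

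For (i), the Landazuri-Seitz bounds give that every nontrivial irreducible representation of a finite simple group of Lie type of rank $r$ has dimension at least $|G|^{c(r)}$, which is exactly the quasi-randomness input that the Bourgain-Gamburd framework needs to upgrade $\ell^2$-flattening into a spectral gap. For (ii), the Pyber-Szab\'o / Breuillard-Green-Tao product theorem asserts that any generating subset $A \subseteq G$ with $|A| \le |G|^{1-\eta}$ satisfies $|A^3| \ge |A|^{1+\delta(r)}$. This is itself a deep input but may be quoted for the purposes of proving the lemma.

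The actual probabilistic content is (iii). I would handle it in two stages. First, a generation step: by the Liebeck-Shalev bound on the number of maximal subgroups of $G$ together with a counting argument on pairs $(a,b) \in G \times G$ that sit inside a common maximal subgroup, a random pair generates $G$ outside a set of measure $O(|G|^{-\delta})$. Second, an escape-from-subvarieties step: every proper subgroup of $G$ lies in a proper subvariety of bounded degree of the ambient algebraic group, and by Larsen-Pink type estimates a random short word in $a,b$ falls outside any such subvariety with overwhelming probability. Combining these via a union bound over conjugacy classes of maximal subgroups (of which there are only polynomially many in $|G|$) one obtains that, with probability $1 - C|G|^{-\delta}$ over the pair $(a,b)$, the $\{a^{\pm 1}, b^{\pm 1}\}$-random walk of length $O(\log |G|)$ does not concentrate on any proper subgroup.

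The main obstacle is making the union bound in (iii) efficient enough to absorb the polynomial count of maximal subgroups while losing only a polynomial factor of $|G|$; this is where the degree bounds from Larsen-Pink become critical, and where the dependence of the final constants $C, \epsilon, \delta$ on the rank $r$ is unavoidable. Once (i)-(iii) are in place, the standard Bourgain-Gamburd argument---iterate the product theorem to flatten the $\ell^2$-norm of the convolution power $\mu_S^{*t}$ until it is close to uniform on every proper subgroup scale, then convert $\ell^2$-flatness into $\ell^\infty$-closeness to uniform via quasi-randomness---delivers the desired spectral gap $\epsilon = \epsilon(r) > 0$ for $\operatorname{Cay}(G, \{a^{\pm 1}, b^{\pm 1}\})$.
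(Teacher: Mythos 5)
The paper does not prove this statement at all: Theorem~\ref{rpairs} is quoted verbatim as Theorem~1.2 of \cite{breuillard2013expansion} and is used purely as a black box inside the proof of Lemma~\ref{lem:random}. So there is no internal proof to compare against; what you have written is a sketch of the external result itself.

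As such a sketch, your outline does follow the genuine Bourgain--Gamburd/BGGT strategy (quasirandomness via Landazuri--Seitz, the Pyber--Szab\'o / Breuillard--Green--Tao product theorem, plus non-concentration), but your step (iii) contains a genuine gap, and it is exactly where the real work lies. Generation of $G$ by the random pair (Liebeck--Shalev counting of maximal subgroups) is far weaker than what the Bourgain--Gamburd machine needs: one must show that the convolution power $\mu^{*t}$ at $t \asymp \log|G|$ puts only mass $|G|^{-\kappa}$ on every coset of every proper subgroup, uniformly, and this cannot be extracted from ``the pair generates'' together with a union bound. In \cite{breuillard2013expansion} this non-concentration occupies the bulk of the paper: it is obtained by showing that a random pair satisfies no short relations forcing it into proper algebraic subgroups, via the theory of strongly dense free subgroups, Larsen--Pink type dimension estimates applied to word varieties (not merely to subvarieties containing subgroups), and further case analysis for small-rank families. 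Your union bound is also miscounted: the number of conjugacy classes of maximal subgroups is bounded in terms of the rank, not polynomial in $|G|$, but each class has up to $|G|$ members and it is cosets, not subgroups, that must be controlled. So your proposal is the right roadmap for the cited theorem but not a proof of it; within this paper the correct move is simply to cite it, as the authors do.
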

\end{displayquote}

And we combine it with the asymmetric case of Lovász local lemma\footnote{We remind it to the reader later} \autocite[lemma 5.1.1 on pg.64]{alon2004probabilistic} . 

       \medskip \medskip Let $E_{ij}$ the event in which $Cay(G,\{ f_i,f_j \})$ doesn't generate $G$ for this $\epsilon(rk(G))$.

              Then \[\Pr(E_{ij})\le C|G|^{-\delta}\]
	      We define a dependency graph of $E_{ij}$ (we identify it with the vertex $\{i,j\}$). This is essentially Johnson graph(\cref{jongr}) $J(K,2)$, where  \[\{ i,j \} \sim \{ i,k \}\text{ if }k\neq j\] \[\{ i,j \} \sim \{ j,k \}\text{ if }k \neq i\]
	      It has this form because $E_{ij}$ is mutually independent of $E_{kl}$ where $\{k,l\} \cap \{i,j\} = \emptyset$.
That means that every vertex $E_{ij}$ has $2(K-1)$ neighbors.

\medskip \medskip Now, we assign a number to each event $E_{ij}$ \[x(E_{ij})= \frac{C}{|G|^{\frac{\delta}{2}} }\]
For large enough $|G|$, that assures that $\forall i,j$
\begin{equation}\label{eqa}\tag{\ding{56}} \Pr(E_{ij})\le C|G|^{-\delta} \le x(E_{ij}) \prod_{B\in \Gamma(E_{ij})} (1-x(B))\end{equation} 

We will see why the second inequality is true. Let's define
\[ Y:=\prod_{B\in \Gamma(E_{ij})} (1-x(B))= \left[1-\frac{C}{|G|^{\frac{\delta}{2}}} \right]^{2(K-1)} \ge 1- \frac{2(K-1)C}{|G|^{\frac{\delta}{2}}}\]
We set \[N={ ( 4(K-1)C ) }^{\frac{2}{\delta}} \]  and we assume $|G| \ge N$.

$N$ depends only on the rank and the choice of $\mathcal{A}$.

We get \[Y\ge \frac{1}{2}\]

Equation  \ref{eqa}  is satisfied if $|G|^{\frac{\delta}{2}} > 2$, which is indeed the case.

According to Lovász local lemma \autocite[lemma 5.1.1 on pg.64]{alon2004probabilistic}, if $E_{ij}$ are mutually independent of all the events that are not its neighbors , and there is an assignment $x(E_{ij})$ in [0,1] 
s.t. equation \ref{eqa} is satisfied, then \[\Pr\left(\bigcap \bar{E_{ij}}\right) \ge \prod_{ij} (1-x(E_{ij}))\]
\[= \left(1- \frac{C}{|G|^{ \frac{\delta}{2}}} \right)^{K\choose 2} \ge 1- \frac{C}{|G|^{ \frac{\delta}{2}}} {K\choose 2} \]

\end{proof}

We can now describe the expansion properties of a randomly generated HDZ:

\begin{prop}\label{liethm_B}
Let $\mathcal{A}$ be a complex that is $d$-regular,  $\chi$-strongly-colorable, s.t. for every projection $P^k,c,c'$  \[|P^k(E^1_{ cc' }(\mathcal{A}))|\ge 2\] 
where $E^1_{c c'}$ are the edges between colors $c$ and $c'$ ,  $P^k$ in a projection into $V^k$ ($k\in \{ c,c' \}$)  .

Let $C: V(\mathcal{A}) \to [\chi]$ be a coloring of $\mathcal{A}$. $K_c$ is the number of vertices of color $c$ . 
Suppose $\mathcal{A}$ satisfies property $\Inv$. 

   \medskip \medskip Let $\mathcal{G}  = G_1 \times G_2 \ldots \times G_\chi $ where $G_i$ are product of at most $r$ finite simple (or quasisimple) groups of Lie type of rank at most $r$. Additionally, no simple factor of $G_i$ is isomorphic to a simple factor of $G_j$ for $i\neq j$.	
We assume $\forall i\ |G_i|\ge N $ where $N$  depends only on the ranks of $G_i$ and the choice of $\mathcal{A}$.

  \medskip \medskip   We randomize $F_1,F_2 \ldots , F_\chi$ subsets of the corresponding groups of corresponding sizes $K_1 , K_2 , \ldots K_\chi$ uniformly independently.
We order them such that $\widetilde{\Inv}$ is satisfied.

We let $\mathcal{C}$ be
\[HDZ^{+}[\mathcal{A} ,C, \mathcal{G}, {F_1,F_2, \ldots , F_\chi} ]\] 
Then, 
in probability at least $1-O(|G_i|^{\delta})$ where $G_i$ is the smallest component of $G$,  random walk on $\mathcal{C}$ converges with rate $\lambda$ where $\lambda,\delta$  depend only on ranks of $G_i$ and the choice of $\mathcal{A}$.

\end{prop}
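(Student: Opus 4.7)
The plan is to combine the CTS expansion bound with the Baranyai-style reduction of $G_{dual}$ developed earlier, and then invoke the random-pair expansion results for simple groups of Lie type to handle the probabilistic step.

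First I would note that because the construction is $HDZ^{+}$, the small complex underlying the resulting Schreier complex $\mathcal{C}$ is (isomorphic to) $\HPOWER[\mathcal{A}]$, which automatically satisfies property $\Inv$ by construction, and which by Lemma \ref{lem:convrate} has the same $G_{walk}$-spectrum as $\mathcal{A}$. All CTS conditions are then verified by Claim \ref{isCTS}, so Proposition \ref{propwithinv} applies and gives
\[
\lambda(G_{walk}(\mathcal{C})) \le \sqrt{\tfrac{1}{2} + \tfrac{1}{2} f(\lambda(G_{dual}),\, \lambda(G_{walk}(\mathcal{A})))}.
\]
Since $\mathcal{A}$ is fixed, $\beta := \lambda(G_{walk}(\mathcal{A}))$ is a constant strictly less than $1$; the whole task therefore reduces to producing a constant $\nu < 1$, depending only on the ranks and on $\mathcal{A}$, such that $\lambda(G_{dual}) \le \nu$ with the claimed probability.

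Next I would invoke Lemma \ref{allC} to lower bound $\sigma(G_{dual})$ by a sum, over a Baranyai decomposition of ${[\chi] \choose 2}$, of the minima of $\sigma(M_{ij})$, where $M_{ij} = Cay(G_iG_j, S_{ij})$. So it suffices to bound $\sigma(M_{ij})$ uniformly away from $0$ for every pair $ij$. This is the point at which the structural hypothesis on $\mathcal{G}$ enters: since each $G_i$ is a product of at most $r$ bounded-rank simple Lie-type factors, and $G_i$ and $G_j$ share no simple factor, Proposition~8.4 of \cite{breuillard2013expansion} reduces expansion of $M_{ij}$ to expansion of the one-sided projected Cayley graphs $Cay(G_k, P_k(S_{ij}))$ for $k \in \{i, j\}$.

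Finally I would deduce expansion of every such projection from the randomness hypothesis. The assumption $|P^k(E^1_{cc'}(\mathcal{A}))| \ge 2$ guarantees that $P_k(S_{cc'})$ contains at least two distinct elements of $F_k$, and the $\widetilde{\Inv}$ ordering makes these appear as a symmetric pair in $G_k$. Because the $F_\ell$ are chosen uniformly and independently, each relevant projected symmetric pair is a uniform random pair in the corresponding $G_k$. Lemma \ref{lem:random}, which already packages the BGT random-pair theorem with the asymmetric Lovász local lemma, then yields that with probability at least $1 - C|G_k|^{-\delta/2}\binom{K}{2}$ every relevant projected pair generates an $\epsilon$-expanding Cayley graph in $G_k$. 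Union-bounding over the $\chi$ factors gives overall success probability $1 - O(|G_{\min}|^{-\delta})$; on this event every $M_{ij}$ is $\epsilon'$-expanding, hence $\lambda(G_{dual}) \le \nu$ for some constant $\nu = \nu(\{rk(G_i)\}, \mathcal{A}) < 1$, and plugging into the CTS bound completes the proof.

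The main obstacle I foresee is the careful bookkeeping of the projections. For each pair $ij$ and each side $k \in \{i,j\}$, one must extract a genuinely two-dimensional source of randomness from $P_k(S_{ij}) \subset F_k$; the hypothesis $|P^k(E^1_{cc'}(\mathcal{A}))| \ge 2$ is exactly what prevents a collapse to a single random generator (which could never generate an expander in a nonabelian Lie-type group). Once this non-degeneracy is verified, everything else is a clean assembly of known ingredients: Theorem \ref{main_thm} (CTS), Lemma \ref{allC} (Baranyai), Proposition~8.4 of \cite{breuillard2013expansion} (projection preserves expansion for Lie groups), and Lemma \ref{lem:random} (random pairs expand with a local-lemma union bound).
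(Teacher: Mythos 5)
Your proposal is correct and follows essentially the same path as the paper's own proof: invoke the CTS bound via Proposition~\ref{propwithinv} (equivalently Lemma~\ref{genthmA}), reduce $\lambda(G_{dual})$ to expansion of the $M_{ij}$ via the Baranyai decomposition of Lemma~\ref{allC}, reduce $M_{ij}$ to its one-sided projections via \cite[Proposition 8.4]{breuillard2013expansion}, and then feed Lemma~\ref{lem:random} into the non-degeneracy hypothesis $|P^k(E^1_{cc'}(\mathcal{A}))|\ge 2$ together with an over-the-colors union bound. The only noticeable difference is cosmetic: you explicitly unpack the $HDZ^{+}/\HPOWER$ step and the spectrum-preservation Lemma~\ref{lem:convrate}, whereas the paper's proof skips this (relying on the already-assumed property $\Inv$ and working directly with condition \eqref{eq:einstein}); your version is arguably the more careful reading of the mismatch between the statement's simultaneous use of $HDZ^{+}$ and the assumption that $\mathcal{A}$ satisfies $\Inv$.
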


\begin{proof}
We want to reduce the condition on $M_{cc'}$ to condition on the corresponding 
projection of the edges on $G_c$ and $G_c'$.  That is on \[M^k_{ cc' }:=Cay(G_c,P_k(S_{cc'}))\]  
To do so,we use the following proposition:
\begin{displayquote}
\begin{prop}
    \label{proplie}
    (\cite[ Proposition 8.4. ]{breuillard2013expansion})
    let \(r \in \mathrm{N}\) and \(\epsilon>0 .\) suppose \(G=G_{1} G_{2},\) where \(G_{1}\) and \(G_{2}\) are products of at most \(r\) finite simple (or quasisimple) groups of Lie type of rank at most
r. Suppose that no simple factor of \(G_{1}\) is isomorphic to a simple factor of \(G_{2} .\) If \(x_{1}=\) \(x_{1}^{(1)} x_{1}^{(2)}, \ldots, x_{k}=x_{k}^{(1)} x_{k}^{(2)}\) are chosen so that \(\left\{x_{1}^{(1)}, \ldots, x_{k}^{(1)}\right\}\) and \(\left\{x_{1}^{(2)}, \ldots, x_{k}^{(2)}\right\}\) are
both \(\epsilon\) -expanding generating subsets in \(G_{1}\) and \(G_{2}\) respectively, then \(\left\{x_{1}, \ldots, x_{k}\right\}\) is \(\delta\)-expanding in G for some \(\delta=\delta(\epsilon, r)>0\)
\end{prop}
\end{displayquote}
\medskip \medskip     Lets assume that for every $c,c'$, and for every $k\in\{ c,c' \}$, $M^k_{cc'}$ is $\epsilon$-expander. 

    Then exists $\delta(rk(G_c),rk(G_c'),\epsilon)$ s.t. $M_{cc'}$ is a $\delta$-expander for every $c,c'$. We can take the minimum $\delta$, 
    and get that condition ~\eqref{eq:einstein} is satisfied.

    Next, we rely on randomization properties of Lie groups in order to assure that $M^k_{cc'}$ are all $\epsilon$-expanders.

    We use  \cref{lem:random} for every color $c$ separately, with the corresponding $G_c, K_c$ and get $\epsilon_c$-expansion. 
The probabilities are independent.
For every projection $P^k,c,c'$  \[|P^k(E^1_{ cc' }(\mathcal{A}))|\ge 2\]

That means that $M^k_{cc'}$ generated by at least two elements. 
So we have that in probability at least $\prod_{c\in [\chi]} (1- \frac{C(G_c)}{|G_c|^{\delta'(G_c)}} {K_c\choose 2}) $, every $M^k_{cc'}$ is a $\tilde{\epsilon}$ expander, for $\tilde{\epsilon}=min(\epsilon_c)$. 

If we set $\delta''$ to be the minimal $\delta'$, then in probability at least $1-O(|G_i|^{\delta''})$ where $G_i$ is the smallest component, condition \ref{eq:einstein} is satisfied, and we can use  \cref{genthmA} as $\mathcal{S}$ is defined over the correct vertices.
Therefore, the complex has a convergence rate of at least $\lambda$, depending only on ranks and the choice of $\mathcal{A}$.
\end{proof}

\section{Main theorem}
We now turn to proof the main theorem, for which need to combine the expansion with the symmetry and the links properties. 

\liethm

\begin{proof}
    We have $\mathcal{A}$ that is $d$-regular and $\chi$-strongly colorable with a coloring\footnote{the additional required conditions are satisfied too} $C$ .

Let $F_1,F_2 \ldots , F_\chi$ symmetric subsets of the corresponding groups of corresponding sizes $2K_1, \ldots 2K_\chi$ chosen uniformly independently.

    We have $\mathcal{A}' = HPOWER[\mathcal{A}]$  with corresponding coloring function $C'$. 

    By \cref{claim:HPOWER},  $\mathcal{A}'$ is $2d$-regular, $\chi$-colorable and satisfies property $Inv$ 
    (the  other regularity properties of $\mathcal{A}'$ are also obtained , i.e.  $\mathcal{A}'(2)=8\mathcal{A}(2)$ ) 

    We use  \cref{liethm_B}, to get the required expansion in the required probablity.

    The regularity properties are obtained from \cref{lem:properties} as  $\mathcal{C}=HDZ^{-}[\mathcal{A}',C',F_1 \ldots F_\chi]$ 

 We let 
    \[\mathcal{S}=CONV[\mathcal{A}',C,F_1,F_2,\ldots,F_\chi]\]
our complex is 
\[ \mathcal{C}=Sc[\mathcal{S},\mathcal{G}] \]
We can use this fact in \cref{lem:symmetry} to get that $\mathcal{C}$ is transitive.

       Then, finally, we use the lemma about the links to get the link properties (\cref{lemlink}).
\end{proof}

\pagebreak
\part{Additional Applications}
\label{partc}
Here we show how the method provide better convergence rate for two known constructions.
\section{The Construction By Conlon}
\label{ssub:conlon}
We can see that a special case of this construction is the construction by Conlon \cite{conlon2017hypergraph}.
This would be an illustrative example.

Conlon looked at $Cay(G,S)$ where $S$ is a set of generators with no non-trivial 4-cycles, and $G=\mathbb{F}^n_2$. He built a complex $\mathcal{C}$ which is based upon triangles of this graph. The triples of $\mathcal{C}$ are composed of 3 different vertices adjacent to the same vertex, and were divided to cliques naturally.

\medskip  \medskip We have a set $S\subset G$ s.t. $S=S^{-1}$.\\ We define the Conlon's complex $\mathcal{C}$ by its triangles:

\[
    \mathcal{C}(2)=\{s_{a}g,s_{b}g,s_{c}g\ |s_{a},s_{b},s_{c}\in S\ distinct\ and\ g\in G\}
\]

In our case it is enough to define $\mathcal{S}={S \choose 3}$
and $\mathcal{C}=CTS[ G,\mathcal{S} ]$. We require that there are no non-trivial 4-cycles in $Cay(G,S)$
for {\em Condition \hyperref[item4]{\textbf{D}}} to be satisfied. Indeed, this is equivalent, because a non-trivial $4$-cycle in $Cay(G,S)$ is
\[
	abcd=e
\]
 \[a,b,c,d\in\ S\text{ s.t. }\{c,d\}\neq\{a,b\}\]
 Which contradicts {\em Condition \hyperref[item4]{\textbf{D}}}. \\

So far we have described a small generalization of Conlon's construction. To describe it specifically, we require that $G=\mathbb{F}_{2}^{t}$ and the product
is additive.\\

\subsection{The random walk in Conlon case}

\label{sec:randcon}
We describe the random walk in Conlon's case, in terms of types and centers. Suppose we start
at $\{s_{1}g,s_{2}g\}$ or $E(g,\{s_{1},s_{2}\})$. In each step, we pick first a center
our edge is contained in. That is, we choose $k\in\{1,2\}$. So we will be 
either in center $g$ or in center $s_{1}s_{2}g$ in probability $1/2$. 
Now we choose another $s\in S$, where $s\neq s_{1},s_{2}$.

Then we look at the triangle that contains the edge $\{s_{1}g,s_{2}g,sg\}$
for $s\in S$. The type of the new edge is $\{s,s_{j}\}$ for $j\in\{1,2\}$.

So the choices are exactly $2d$, where $d$ is the degree of $L$.
The graph $L$ is exactly $J(S,2)$ defined earlier, so $d$ is $2(S-2)$. 
There are $4(S-2)$ choices all in all.

Looking it as a random walk over $G_{rep}$, selecting a center corresponds to selecting either the first operand or second operand in $T$. 
And selecting the new edge type corresponds to an action by $P_R$.

\subsection{Result}
We have the following result:

\begin{restatable}{cor}{conlons}\label{conlon}
	Assuming
	\[G=\mathbb{F}_{2}^{t}\]
	\[S\subset\ G\text{ such that }\]
	\[a+b=c+d\text{
	    only in trivial case }(\{a,b,c,d\}\subset\ S)\]

			Then for \[\mathcal{S}={S \choose 3}\text{ }H=CTS[ G,\mathcal{S} ]\]

	in terms of the original graph \[\lambda=\lambda(Cay(G,S))\]
	we have that the convergence rate on $2D$-random walk is \[
	 \frac{\sqrt{3}}{2} +\frac{1}{2\sqrt{3}}\lambda^2 + O(\frac{1}{d})
	\] 
\end{restatable}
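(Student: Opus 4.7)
The plan is to apply the CTS corollary \cref{corollary:rw} directly, since the paper has already verified (in this subsection and in \cref{sec:randcon}) that Conlon's complex $\mathcal{C}=CTS[G,\binom{S}{3}]$ satisfies conditions \textbf{A}--\textbf{E}, with the hypothesis ``$a+b=c+d$ only in the trivial case'' translating to condition \textbf{D}. Consequently
\[
\lambda(G_{walk}(\mathcal{C})) \;\le\; \sqrt{\tfrac{1}{2}+\tfrac{1}{2}f(\alpha,\beta)},
\]
where $\alpha=\lambda(G_{dual})$ and $\beta=\lambda(L)$, so the task reduces to estimating $\alpha$ and $\beta$ as functions of $\lambda:=\lambda(Cay(G,S))$ and $d:=|S|$.

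The estimate for $\beta$ is immediate: as noted in the text, $L=G_{walk}(\mathcal{S})=J(S,2)$, and by \cref{specjon} we have $\beta=\frac{d-4}{2(d-2)}=\tfrac{1}{2}+O(1/d)$. The key computation is $\alpha$. Here I would exploit two features simultaneously: the abelian $\mathbb{F}_2^t$ structure (which makes $s+s=0$) and the no-4-cycle hypothesis (which makes $\{s+s':s\ne s'\}$ a multiplicity-free set). Writing $A$ for the normalized adjacency matrix of $Cay(G,S)$ and splitting the ordered sum $\sum_{s,s'\in S}e_{s+s'+g}$ into the diagonal $s=s'$ part and the off-diagonal part, I obtain
\[
A^2 \;=\; \tfrac{1}{d}\,I \;+\; \tfrac{d-1}{d}\,B_{dual},
\]
where $B_{dual}$ is the normalized adjacency of $G_{dual}=Cay(G,\mathcal{T})$, with $\mathcal{T}=\binom{S}{2}$ acting as $\{s_1,s_2\}\hat{\cdot}g=s_1+s_2+g$. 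Reading off the spectrum gives $\alpha=\frac{d\lambda^2-1}{d-1}=\lambda^2+O(1/d)$.

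Plugging these into the CTS bound and applying the simple property $f(a,b)\le a+b$ from \cref{zg}, I obtain
\[
\lambda(G_{walk}(\mathcal{C}))^2 \;\le\; \tfrac{1}{2}+\tfrac{1}{2}\bigl(\alpha+\beta\bigr) \;=\; \tfrac{3}{4}+\tfrac{\lambda^2}{2}+O(1/d).
\]
Finally, concavity of the square root gives $\sqrt{a+bx}\le\sqrt{a}+\frac{b}{2\sqrt{a}}x$ for $a,b,x\ge 0$; with $a=3/4$, $b=1/2$, $x=\lambda^2$ this yields
\[
\lambda(G_{walk}(\mathcal{C})) \;\le\; \tfrac{\sqrt{3}}{2}+\tfrac{1}{2\sqrt{3}}\lambda^2+O(1/d),
\]
as claimed.

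The only genuinely nontrivial step is the identification of $\alpha$ in terms of $\lambda$, which I expect to be the main (and only) obstacle requiring care. Both the abelian group structure and the no-4-cycle hypothesis are essential: without the former the ordered sum would not collapse into a clean diagonal-plus-$G_{dual}$ decomposition, and without the latter $B_{dual}$ would acquire multiplicities obscuring the spectral identity. Everything else is formal bookkeeping: identifying $L$ as a Johnson graph, substituting into \cref{corollary:rw}, and invoking the elementary bounds on $f$ and on the square root.
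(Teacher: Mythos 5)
Your proposal is correct and follows the same route the paper intends: apply \cref{corollary:rw} to the Conlon CTS, identify $L=J(S,2)$ so $\beta=\tfrac{1}{2}+O(1/d)$, relate $\lambda(G_{dual})$ to $\lambda(Cay(G,S))$ via the identity $A^2=\tfrac{1}{d}I+\tfrac{d-1}{d}B_{dual}$ (which is where the $\mathbb{F}_2$ structure and the no-4-cycle hypothesis enter), then use $f(a,b)\le a+b$ and a first-order bound on $\sqrt{\cdot}$. The paper's own ``proof'' is the one-liner ``Directly from \cref{corollary:rw} for the defined CTS,'' so you are supplying exactly the elided computation, and all your numerics (the identification $\tfrac{1}{2\sqrt{3}}=\tfrac{\sqrt{3}}{6}$, the $O(1/d)$ bookkeeping) check out.
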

\begin{proof}
    Directly from \ref{corollary:rw} for the defined CTS
\end{proof}

\begin{rem*}
	In terms of expansion of the auxiliary graph,
	we get as $\varepsilon\rightarrow 1$, asymptotic behavior of 
\[
	1 - \frac{\sqrt{3}}{2} - \frac{(\varepsilon - 1)^2}{2 \sqrt{3}} 
\]

	compared to 

	\[
		\frac{\epsilon^{4}}{2^{15}}
	\]

	achieved in \cite{conlon2017hypergraph}. 
	This is asymptotically better\footnote{It is possible that link analysis would yield even better results }.
\end{rem*}

\section{The 3-product case}
\label{three-product_case}
Chapman, Linal and Peled described a construction called Polygraph in the paper \cite{chapman2018expander}. We will describe it very briefly, and refer the reader to the paper for further explanation.

In this construction, one takes a graph $G$ with large enough girth and a multiset of numbers $S$.
And one defines a graph $G_S$ called polygraph. 
\par The vertices of $G_S$ are ${V(G)}^m$ (tensor product) 

Two vertices $(x_1\ldots x_n),(y_1\ldots y_n)$ are adjacent if the collection $(d(x_i,y_i)\ |\ 1\le i\le m )$ is equal as a multiset to $S$,
where $d$ is the distance function on the graph. 

Finally, one takes the cliques complex of this complex $\mathcal{C}_{G_{S}^{(2)}}$.

In the HDZ construction, If we take $\mathcal{G}=G_1\times G_2 \times G_3 $ with $S_1,S_2,S_3$ as generators and $\mathcal{A}=K^3_3$ (the complete 3-partite graph), we get a very similar construction as the [1,1,0] construction in the paper.

Namely, we get a complex $\mathcal{C}=HDZ(\mathcal{G},\mathcal{A},C,F_1,F_2,F_3)$
with triangles
	\[
		T=\{s_{1}g_{1},s_{2}g_{2},s_{3}g_{3}\ |\ g_{i}\in G_{i},s_{i}\in S_{i}\}
	\]

	\medskip \medskip  For $S=[1,1,0]$, $\mathcal{C}_{G_{S}^{(2)}}$ is the same as the complex $\mathcal{C}$, in the specific case $G_1=G_2=G_3$.
So, we provide a slight generalization of the $[1,1,0]$ case, as we allow taking different base graphs. 
On the other hand, we force all the graphs to be Cayley graph.

\begin{defn}[3-Product-Case]\label{def:3-product-case} 

	Given $G_1,G_2,G_3$ groups,\\ with $S_{i}\subset G_{i}$
	s.t.
	\begin{enumerate}
		\item $S_{i}=S_{i}^{-1}$
		\item $d:=|S_{i}|=|S_{j}|$
		\item $|G_{i}|=|G_{j}|$
		 
	\end{enumerate}
	We define
	\[
		\mathcal{G}:=G_{1}\times G_{2}\times G_{3}
	\]

	We can also describe it as a HDZ. 
	by defining $\mathcal{S}$ to be the 2-complex with faces \footnote{That means $\{ s_1,s_2,s_3 \}$ where $s_i\in S_i$ } 
	\[\mathcal{S}(2)=\{S_{1},S_{2},S_{3}\}\]
	
	as $HDZ[\mathcal{S},C,\mathcal{G} , S_1,S_2,S_3 ]$ with the obvious coloring (vertex $S_i$ is in color $i$)

	and \[
		H:=CTS[ G,\mathcal{S} ]
	\]
\end{defn}

And we have the following corollary:

\begin{restatable}{cor}{thrprod}
	Given $\lambda_{i}=\lambda(Cay(G_{i},S_{i}))$ ordered s.t. $\lambda_{1}\le\lambda_{2}\le\lambda_{3}$
	the random walk on $H$ defined in \defref{3-product-case}, converges
	with rate $\sqrt{\frac{1}{2}+\frac{1}{2}f(\frac{1+2\lambda_{3}}{3},\frac{1}{2})}$
	where $f$ is the zig-zag function\footnote{originally defined in \cite[Theorem 3.2]{reingold2002entropy}} from \cref{equation:zig-zag} \\ ( $f(a,b)\le a+b$, $f<1$ where
 $a,b<1$ ). 
\end{restatable}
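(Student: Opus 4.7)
The plan is to apply the CTS machinery of \cref{corollary:rw} to $H$ and then compute the two spectral parameters $\lambda(G_{dual})$ and $\lambda(L)$ that enter the bound. First I would verify that $H = CTS[\mathcal{G},\mathcal{S}]$ really is a CTS. Writing $\mathcal{S}$ for the complete 3-partite 2-complex with parts $S_1,S_2,S_3$, the conditions of \cref{def:CTSfor} follow easily: $\mathcal{S}$ is $d$-regular with connected 1-skeleton ({\em Conditions A, E}); the symmetry $S_i = S_i^{-1}$ gives {\em Condition C}; commutativity of elements living in distinct coordinates of $G_1\times G_2\times G_3$ gives {\em Condition B}; and the ``free action'' {\em Condition D} follows exactly as in \cref{isCTS}. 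Then \cref{corollary:rw} yields $\lambda(G_{walk}(H)) \le \sqrt{\tfrac12 + \tfrac12 f(\lambda(G_{dual}),\lambda(L))}$.

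Second, I would compute $\lambda(G_{dual})$ by tensor decomposition. Partitioning the generators by color-pair gives $A_{dual} = A_{12}+A_{13}+A_{23}$ with $A_{12} = B_1\otimes B_2\otimes I$, $A_{13} = B_1\otimes I\otimes B_3$, $A_{23} = I\otimes B_2\otimes B_3$, where $B_i$ is the (unnormalized) adjacency of $\mathrm{Cay}(G_i,S_i)$. On a tensor eigenvector $v_1\otimes v_2\otimes v_3$ with normalized $P_i v_i = \mu_i v_i$, the normalized operator $P_{dual}$ has eigenvalue $\tfrac13(\mu_1\mu_2 + \mu_1\mu_3 + \mu_2\mu_3)$. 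Analyzing the three regimes (one, two, or three factors non-trivial), and using $|\mu_i|\le \lambda_i < 1$, the dominant non-trivial eigenvalue comes from exactly one factor non-trivial at its largest: $\lambda(G_{dual}) = \tfrac{1+2\lambda_3}{3}$.

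Third, I would show $\lambda(L) = \tfrac12$ where $L = G_{walk}(\mathcal{S})$. Decomposing $V(L) = V_{12}\cup V_{13}\cup V_{23}$ with $V_{ij} \cong \mathbb{R}^{S_i}\otimes \mathbb{R}^{S_j}$, and each tensor factor split as $\mathbb{R}\mathbf{1}\oplus \mathbf{1}^\perp$, the cross-block operator from $V_{jk}$ to $V_{ij}$ contracts (sums out) the $S_k$ coordinate and constant-extends along $S_i$. Tracking the resulting $\{\mathbf{1},\mathbf{1}^\perp\}$ types yields the invariant decomposition: a 3-dimensional ``trivial'' subspace on which $A$ reduces to $d(J-I)$ with normalized eigenvalues $1,-\tfrac12,-\tfrac12$; three blocks $V_{ij}(\mathbf{1}^\perp_i,\mathbf{1}_j)\oplus V_{ik}(\mathbf{1}^\perp_i,\mathbf{1}_k)$ (one per color $i$), each of dimension $2(d-1)$, on which $A$ is a $2{\times}2$ off-diagonal with entry $d$, giving normalized eigenvalues $\pm\tfrac12$; and three ``doubly non-trivial'' $(d-1)^2$-dimensional subspaces on which $A$ vanishes. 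Hence $\lambda(L) = \tfrac12$.

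Combining yields $\lambda(G_{walk}(H)) \le \sqrt{\tfrac12 + \tfrac12 f\bigl(\tfrac{1+2\lambda_3}{3},\tfrac12\bigr)}$ as claimed. I expect the main technical effort is in the $\lambda(L)$ computation: producing an eigenvector at $\tfrac12$ (take $f_{12}(s_1,s_2) = f_{13}(s_1,s_3) = u(s_1)$, $f_{23}\equiv 0$ for any $u\perp \mathbf{1}$) is immediate, but the matching upper bound $\lambda(L)\le \tfrac12$ genuinely requires the invariant-subspace decomposition above; the remaining steps reduce to routine tensor algebra.
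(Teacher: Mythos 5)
Your proof is correct, and it is more complete than the paper's. The paper's proof of this corollary is literally one line—``Directly from \cref{fullskel2}''—which, after plugging $\chi = 3$ so that $\frac{N-(\chi-1)+(\chi-1)\nu}{N} = \frac{1+2\nu}{3}$ with $\nu = \lambda_3$, still requires the fact that $\lambda(G_{walk}(\mathcal{A})) = \tfrac12$ when $\mathcal{A}$ is the complete $3$-partite $2$-complex $K_{d,d,d}$ with all rainbow triangles. That value is nowhere computed in the paper. Your third step supplies exactly this missing calculation: the decomposition of $L$ into the trivial block $d(J-I)$, the three $2(d-1)$-dimensional ``one color non-constant'' blocks with normalized spectrum $\{\pm\tfrac12\}$, and the three $(d-1)^2$-dimensional ``two colors non-constant'' kernels, is a clean and correct way to get $\lambda(L)=\tfrac12$; the explicit eigenvector $f_{12}=f_{13}=u(s_1)$, $f_{23}\equiv 0$ confirms that $\tfrac12$ is actually attained. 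Your second step (the tensor eigenvector analysis of $G_{dual}$) is essentially a rederivation of \cref{lem:full1skellam} specialized to $\chi=3$, and your first step (verifying the CTS conditions) duplicates \cref{isCTS}, so those parts match the paper's machinery. One small point worth making fully rigorous in step two: you should explicitly bound the ``two factors non-trivial'' and ``three factors non-trivial'' cases—e.g.\ via $(1+\mu_2)(1+\mu_3)<2(1+\lambda_3)$ and $|\mu_1\mu_2+\mu_1\mu_3+\mu_2\mu_3|\le 3\lambda_3^2 < 1+2\lambda_3$—so that $\frac{1+2\lambda_3}{3}$ is genuinely the maximum over the whole non-trivial tensor spectrum, including possible negative $\mu_i$; the paper's own argument in \cref{lem:full1skellam} is similarly terse on this point.
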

\begin{proof}
    Directly from \cref{fullskel2}
\end{proof}

   \printbibliography
\end{document}